\documentclass[11pt, reqno, twoside, letterpaper]{amsart}

\usepackage{z-Paperstyle}
\usepackage{tikz}
\usepackage{tikz-cd}

\draftfalse

\makeatletter \def\section{\@startsection{section}{1}%
\z@{1.2\linespacing\@plus\linespacing}{.5\linespacing}%
{\normalfont\Large\scshape\centering}} \makeatother

\newcommand{\vn}{\mathbf{n}}
\newcommand{\va}{\boldsymbol{\alpha}}
\newcommand{\vz}{\mathbf{z}}
\newcommand{\ve}{\mathbf{e}}
\newcommand{\vu}{\mathbf{u}}
\newcommand{\vv}{\mathbf{v}}
\newcommand{\vzero}{\mathbf{0}}
\DeclareMathOperator{\rk}{rank}
\DeclareMathOperator{\Gal}{Gal}
\DeclareMathOperator{\Frob}{Frob}
\DeclareMathOperator{\disc}{disc}
\newcommand{\house}[1]{\lVert #1\rVert}
\newcommand{\Nm}{\mathrm{N}}
\newcommand{\fp}{\mathfrak{p}}
\newcommand{\fa}{\mathfrak{a}}
\newcommand{\fb}{\mathfrak{b}}
\newcommand{\fc}{\mathfrak{c}}
\newcommand{\fd}{\mathfrak{d}}

\newcommand{\dk}{{\mathtt{n}_k}}
\newcommand{\dK}{{\mathtt{n}_K}}
\newcommand{\auxp}{\ell}
\newcommand{\rou}{\xi}
\newcommand{\wgt}{H}
\newcommand{\bxs}{N}
\newcommand{\dgb}{D}
\newcommand{\expC}{C}
\newcommand{\XX}{X}
\newcommand{\hB}{B}
\newcommand{\lam}{\lambda}
\newcommand{\lamK}{\lambda_{\Kf}}
\newcommand{\hgt}{T}
\newcommand{\galKk}{\mathsf{G}}
\newcommand{\galL}{\widetilde{\mathsf{G}}}
\newcommand{\Kf}{K}
\newcommand{\lat}{\Gamma}
\newcommand{\poly}{\wp}
\newcommand{\idx}{\cA}
\newcommand{\PolyD}{{\tt Poly}_\dgb}
\newcommand{\rr}{A}
\newcommand{\Ew}{E}
\newcommand{\modl}{\cP}
\newcommand{\nmod}{P}
\newcommand{\Ic}{I}
\newcommand{\wt}{\delta}
\newcommand{\Sset}{\mathsf{S}}
\newcommand{\emb}{\iota}
\newcommand{\nS}{\mathsf{s}}
\newcommand{\Tset}{\mathsf{T}}
\newcommand{\nT}{\mathsf{t}}
\newcommand{\cls}{\mathsf{C}}
\newcommand{\cube}{\mathcal{N}}

\title[Landau--Siegel zeros and effective Brauer--Siegel]
{Landau--Siegel zeros over a base field\\ and effective
Brauer--Siegel}

\author{William D. Banks}
\address{Department of Mathematics\\
University of Missouri\\ Columbia, MO 65211\\ USA}
\email{bankswd@missouri.edu}

\keywords{Landau--Siegel zeros, Brauer--Siegel theorem, class
numbers, Chebotarev density theorem, Hecke $L$-functions, zero
estimates, Masser's theorem}
\subjclass[2020]{11R42, 11R29, 11M20, 11R44, 11J81}

\date{\today}

\begin{document}

\begin{abstract}
Let $k$ be a number field. We prove that for every quadratic
extension $F/k$, every real zero $\beta\in(0,1)$ of $L(s,\chi_{F/k})$
satisfies $(1-\beta)\log d_F\ge c(k)$ with an effective constant
$c(k)>0$, and we deduce (through a reduction of Stark) that the
Brauer--Siegel theorem holds effectively and uniformly over all
Galois extensions $\Kf$ of $k$, in the form
\[
h_\Kf R_\Kf\ge c(k)\,d_\Kf^{1/2}/((2\pi)^{[\Kf:\Q]}\log d_\Kf).
\]
Stark's effective cases of the Brauer--Siegel theorem excluded the
fields with quadratic subfields, since those subfields carried the
ineffectivity of Siegel's theorem; over a base field $k$, the first
result removes this obstruction. Both rely on a uniform bound for
Chebotarev sums. Let $\Kf/k$ be a Galois extension with group
$\galKk$, and suppose $\Sset\subset\galKk$ contains the identity. For
any prime $\fp$ of $k$, let $\wt_\fp$ be the proportion of the primes
of $\Kf$ above $\fp$ whose Frobenius lies in $\Sset$. Then
\[
\sum_{\Nm\fp\le\XX}\wt_\fp\,\frac{\log\Nm\fp}{\Nm\fp}
\le\Big(\frac{|\Sset|}{|\galKk|}+\eps\Big)\log\XX+O(1)
\]
holds uniformly in $\Kf$ as soon as $\XX\ge d_\Kf^{\,\expC}$, with
$\expC$ and the implied constant depending only on $\eps$, $\expC$,
and the degree $[\Kf:\Q]$. The proof is the argument of OpenAI in
the form given by Kerr, simple vanishing at the points
$(\emb\theta)_{\emb\in\Sset}$, with $\theta$ in a box in $\cO_\Kf$,
and Masser's zero estimate. The field enters only through Masser's
invariant of the lattice of points, which we compute to be
$[\Kf:\Q]/|\Sset|$ by averaging over the Galois group of the normal
closure. Over $\Q$, the determinant method gives sharper Chebotarev
bounds, and we explain the comparison.
\end{abstract}

\maketitle

\setcounter{tocdepth}{1}
\tableofcontents

\section{Introduction and statement of results}
\label{sec:intro}

\subsection{Background}

Two preprints of October 2026 assert zero-free half-planes for every
number field. In~\cite{CT}, the Dedekind zeta function $\zeta_\Kf(s)$
of every number field $\Kf$ is shown to have no zeros with
$\mathrm{Re}\,s>31/32$, and the same half-plane is zero-free for the
$L$-function of every finite-order Hecke character of $\Kf$ where it
is holomorphic; the preprint states that its proofs were generated
and formalized in Lean by AxiomProver. In~\cite{Li}, the $L$-function
of every finite-order Hecke character over every number field is
zero-free for $\mathrm{Re}\,s>0.8749$. Both rest on recent preprints
of OpenAI cited there, and we have not verified them. If they are
accepted, they settle everything below about real zeros with
absolute constants, and more. A zero-free region of width $w$ for
every $\zeta_\Kf(s)$, Stark's lower bound for the residue
(Lemma~\ref{lem:residue}) when $\log d_\Kf\ge 1/(4w)$, and Hermite's
theorem below that, give the Brauer--Siegel theorem effectively for
every number field. The present paper is independent of these
results and unconditional. Its content is a short argument, with
the computation of Masser's invariant behind it, that excludes
Landau--Siegel zeros over a base field and makes the Brauer--Siegel
theorem effective for Galois extensions of that field, and it
claims nothing beyond that.

OpenAI~\cite{OAI} has recently shown that Dirichlet $L$-functions of
quadratic characters have no Landau--Siegel zeros via an argument in
the biquadratic field $\Q(\sqrt d,\sqrt 2\,)$; see~\cite{Ba} for an
overview of the argument with explicit constants and~\cite{Ke} for a
simplified adaptation, in which the vanishing to high order
of~\cite{OAI} is replaced by simple vanishing at many lattice points
and a zero estimate of Masser. In~\cite{Ke}, the auxiliary number
$\sqrt 2$ is there to raise the rank of a certain lattice to four
while retaining three variables, and this is the source of the
exponent $\tfrac34$ that arises there. Green~\cite{Gr} obtained the
sharp constant $\tfrac12$ by a different route, working exclusively
in $\Q(\sqrt d\,)$ and comparing the Archimedean and $p$-adic sizes
of a Gram determinant built from small integers of $\Q(\sqrt d\,)$
and the derivatives of monomials at them.
Kominers~\cite{Ko} observed that the determinant is not about
quadratic characters. Formed from the pairs $(\alpha,\sigma\alpha)$,
with $\alpha$ running over small integers of a Galois extension
$\Kf/\Q$ and $\sigma$ an automorphism, it bounds the logarithmic
mass of the primes whose Frobenius class is that of $\sigma$ by the
Chebotarev density of the class, uniformly in $\Kf$, with absolute
constants and no threshold on the length of the sum.

The present paper has two aims. The first is the Brauer--Siegel
theorem. For a number field $\Kf$ with class number $h_\Kf$,
regulator $R_\Kf$, and absolute discriminant $d_\Kf$, the theorem
asserts that $\log(h_\Kf R_\Kf)\sim\tfrac12\log d_\Kf$ as $\Kf$ runs
over any sequence of Galois extensions of $\Q$ for which
$[\Kf:\Q]/\log d_\Kf\to 0$, and its constants have long been
ineffective because the proof passes through Siegel's theorem on
$L(1,\chi)$.
Stark~\cite{St} located the ineffectivity. If $\Kf/k$ is a normal
extension and $\zeta_\Kf(s)$ has a real zero $\beta$ close to one,
then $\beta$ is a zero of $\zeta_F(s)$ for some field $F$ with
$k\subseteq F\subseteq\Kf$ and $[F:k]\le 2$. For a field built by a
tower of normal steps over $\Q$, an \emph{almost normal field} in
the terminology of later authors, the reduction continues down the
tower, and the only
sources of ineffectivity are the quadratic subfields; Stark's
effective cases are the almost normal fields without them. Now that
Landau--Siegel zeros of quadratic characters over $\Q$ are
excluded~\cite{OAI,Gr,Ko} with absolute constants, the theorem is
effective for every almost normal field, in particular for every
Galois extension of $\Q$. That much is immediate from Stark's
reduction, and we state it only in passing (Remark~\ref{rem:almost}).
Theorem~\ref{thm:bs} concerns the Galois extensions $\Kf$ of an
\emph{arbitrary} number field $k$. Such a $\Kf$ need not be almost
normal,
and Stark's reduction stops at $k$, where two obstructions remain,
the quadratic extensions $F/k$ and the zeta function of $k$ itself.
The first is removed by Theorem~\ref{thm:quadratic} and
Corollary~\ref{cor:siegel}, the exclusion of Landau--Siegel zeros
for the quadratic characters of $k$, and the second is a constant
depending only on $k$. The outcome is the Brauer--Siegel theorem,
effective and uniform over all Galois extensions of a fixed base
field.

The second aim is the method. The argument of~\cite{Ke}, carried
out in a Galois extension $\Kf/k$ of number fields with group
$\galKk$, with one variable for each element of a subset
$\Sset\subset\galKk$ containing the identity, bounds the logarithmic
mass of the primes of $k$ whose Frobenius lies in $\Sset$ by the
Chebotarev density $|\Sset|/|\galKk|$ of those primes, uniformly in
$\Kf$ (Theorem~\ref{thm:main}). The constant is the number of
variables $|\Sset|$ divided by the rank $[\Kf:\Q]$ of the lattice of
points, times $[k:\Q]$. In the quadratic case $\Kf=F(\rou)$, with
$\rou$ a primitive $\auxp$-th root of unity and $\Sset$ the identity
together with the coset of $\Gal(\Kf/F)$ occupied by the inert
primes, this is $\tfrac12+1/[\Kf:k]$, which can be made as close to
$\tfrac12$ as we please by letting $\auxp\to\infty$. The only input
from the field itself is Masser's invariant of the lattice, and its
computation, a Galois-theoretic argument specific to
$\Q(\sqrt d,\sqrt 2\,)$ in~\cite{Ke}, becomes in general an averaging
over the Galois group of the normal closure that uses only its
transitivity on embeddings (Proposition~\ref{prop:mu}).
Independently, and at the same time, Frei and Kai~\cite{FK} carried
the argument of~\cite{Ke} over an arbitrary number field $k$ and
obtained Corollary~\ref{cor:siegel} for real Hecke characters of
$k$, with a constant depending on $k$; their observation that a
single complex embedding suffices in the computation of Masser's
invariant is explained by Proposition~\ref{prop:mu}.
Over $\Q$ the determinant method, as used in~\cite{Ko}, gives sharper
bounds than Theorem~\ref{thm:main}, with the exact density, no
threshold in $\XX$, and no hypothesis on $\Sset$; the two methods are
compared in \S\ref{sec:remarks}.

\subsection{Results}

Throughout the paper, $k$ is a number field of degree
$\dk\defeq[k:\Q]$.
For a quadratic extension $F/k$, let $\chi=\chi_{F/k}$ be its
quadratic character, so that $\chi(\fp)=1,-1,0$ according to whether
$\fp$ splits, is inert, or ramifies in $F$, and
$L(s,\chi)=\zeta_F(s)/\zeta_k(s)$.

\begin{theorem}\label{thm:quadratic}
Let $k$ be a number field. For each $\eps>0$ there is a number
$\expC_0=\expC_0(\eps)$, independent of $k$, such that for every
$\expC\ge\expC_0$, every quadratic extension $F/k$, and
$\XX\defeq d_F^{\,\expC}$, we have
\[
\ssum{\Nm\fp\le \XX\\ \chi(\fp)=-1}\frac{\log\Nm\fp}{\Nm\fp}
\le\Big(\frac12+\eps\Big)\log \XX+O_{k,\eps,\expC}(1).
\]
\end{theorem}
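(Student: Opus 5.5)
Theorem~\ref{thm:quadratic} will be deduced from the general Chebotarev bound, Theorem~\ref{thm:main}, applied to the field $\Kf=F(\rou)$, with $\rou$ a primitive $\auxp$-th root of unity for an auxiliary prime $\auxp$. The prime $\auxp$ is chosen in terms of $\eps$ and $k$ only, large enough that $\Q(\rou)\cap F\subseteq k$ after removing finitely many bad $\auxp$; for instance, take $\auxp$ unramified in $k$ and larger than any prime dividing $d_k$. Then $F\cap k(\rou)=k$ whenever $F/k$ is ramified at a place outside $\auxp$, and the finitely many exceptional $F$ contribute only to the $O_{k,\eps,\expC}(1)$. Under this choice, $\galKk=\Gal(\Kf/k)\cong\Gal(F/k)\times\Gal(k(\rou)/k)$. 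Let $\Sset$ consist of the identity together with the coset $\{\sigma\}\times\Gal(k(\rou)/k)$, where $\sigma$ is the nontrivial element of $\Gal(F/k)$. A prime $\fp$ of $k$ with $\chi(\fp)=-1$ has every Frobenius above it in the nontrivial coset, so $\wt_\fp=1$. Since the weights $\wt_\fp$ are nonnegative, the left side of the theorem is bounded by the sum in Theorem~\ref{thm:main}.

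Theorem~\ref{thm:main} then gives the constant $\dk|\Sset|/[\Kf:\Q]$, as explained in the introduction. Here $|\Sset|=1+[k(\rou):k]$ and $[\Kf:\Q]=2\dk[k(\rou):k]$, so the constant equals
\[
\frac12+\frac{1}{[\Kf:k]}.
\]
Since $[k(\rou):k]\ge(\auxp-1)/\dk$, taking $\auxp$ large in terms of $\eps$ and $\dk$ makes this at most $\tfrac12+\eps/2$. The remaining $\eps/2$ absorbs the $\eps$ slack in Theorem~\ref{thm:main}.

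The main obstacle is the threshold. Theorem~\ref{thm:main} requires $\XX\ge d_\Kf^{\,\expC'}$ with an exponent $\expC'$ depending only on $\eps$, whereas the statement uses $\XX=d_F^{\,\expC}$ with $\expC_0$ independent of $k$. I would bound the discriminant of the compositum by
\[
d_\Kf\le d_F^{[\Kf:F]}\,d_{k(\rou)}^{[\Kf:k(\rou)]},
\]
noting that $d_{k(\rou)}$ depends only on $k$ and $\auxp$. Since $d_F\ge 3$, this gives $d_\Kf\le d_F^{\,c\,[\Kf:F]}$ for some $c$ depending on $k$ and $\auxp$. This has two defects. First, $c$ depends on $k$ through $d_{k(\rou)}$. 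Second, and more seriously, the factor $[\Kf:F]\approx \auxp$ multiplies the exponent. Both are harmless only if the exponent requirement in Theorem~\ref{thm:main} scales so that $\expC'[\Kf:F]$ depends on $\eps$ alone. As stated, the exponent depends on $\eps$ and the degree, and the degree is itself bounded in terms of $\eps$ and $\dk$.

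I would therefore handle the $k$-dependence as follows. For $d_F$ larger than a constant depending on $k$ and $\eps$, the factor from $d_{k(\rou)}$ is dominated by a small power of $d_F$. The finitely many $F$ with small discriminant go into the $O_{k,\eps,\expC}(1)$. For the independence of $\expC_0$ from $k$, I would check in the proof of Theorem~\ref{thm:main} that the needed exponent is really of the form (function of $\eps$) $\times\, d_\Kf$-normalized by $[\Kf:k]$. This is the step I expect to require care, possibly by revisiting how $\log d_\Kf/[\Kf:\Q]$ enters the box size in the Masser argument.
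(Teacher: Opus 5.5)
\textbf{There is a genuine gap, at the step you defer.}

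Your setup matches the paper's. You take $\Kf=F(\rou)$ and $\Sset$ the identity together with the nontrivial coset of $\Gal(\Kf/F)$, and the density is $\tfrac12+1/[\Kf:k]$. This $\Sset$ works for every $F$, so the direct-product structure and the exclusion of $F\subseteq k(\rou)$ are unnecessary. There is one small slip: the inert primes dividing $\auxp$ are ramified in $\Kf$, so $\wt_\fp=0$ for them. They must be added back by hand, at a cost of at most $\dk$.

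The step you defer is exactly what separates the statement from a $k$-dependent one. Used as a black box, Theorem~\ref{thm:main} needs $\XX=d_\Kf^{\expC'}$ with $\expC'\ge\expC_0(\eps,\dK)$, and its proof gives $\expC_0\approx\dK/\eps$. Since $\log d_\Kf\ge[\Kf:F]\log d_F$, this forces $\expC\gg\dK[\Kf:F]/\eps$. That grows with $\dk$, because $\auxp\gg\dk/\eps$. A further problem is that its error term depends on $\expC'=\expC\log d_F/\log d_\Kf$, which varies with $F$.

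Revisiting the Masser argument does not help by itself. The field enters that argument only through $\lamK$, via the house $\hB=\dK\lamK\bxs$ of the points; this is Theorem~\ref{thm:precise}. The $\dK$-dependence of the threshold in Theorem~\ref{thm:main} comes solely from Minkowski's bound $\lamK\le d_\Kf^{1/2}$. Here that bound gives only $\dk\log\lamK\le\tfrac{\dk(\auxp-1)}{2}\log d_F+O_{k,\auxp}(1)$. What you need is an arithmetic bound $\dk\log\lamK\le c\log d_F+C_k$, with $c$ absolute, for the particular field $F(\rou)$.

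\textbf{The missing idea: an explicit small-house basis (Lemma~\ref{lem:generator}).}

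First, write $F=k(\sqrt{\tau_0})$ with $(\tau_0)=\fa\fb^2$ and $\fa$ squarefree. The primes dividing $\fa$ ramify in $F$, so $\Nm\fa\le\Nm\fd_{F/k}\le d_F$.

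Next, replace $\fb$ by an integral ideal of norm at most $C_k$ in its class. This gives $\tau_1$ with $F=k(\sqrt{\tau_1})$ and $\Nm\tau_1\le C_kd_F$.

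Then multiply by the square of a suitable unit (Dirichlet) to balance the conjugates. This gives $\tau\in\cO_k$ with $F=k(\sqrt\tau)$ and $|\iota(\tau)|\le C_kd_F^{1/\dk}$ for every $\iota$.

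Now consider the integers $\omega\sqrt\tau^{\,a}\rou^{j}$, with $\omega$ in a $\Z$-basis of $\cO_k$, $a\in\{0,1\}$ and $0\le j\le\auxp-2$. They span $\Kf$ over $\Q$ and have house at most $C_kd_F^{1/(2\dk)}$. Hence $\dk\log\lamK\le\tfrac12\log d_F+C_k=\frac1{2\expC}\log\XX+C_k$.

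Insert this into Theorem~\ref{thm:precise}, not Theorem~\ref{thm:main}. Choose $\auxp$ and $\wgt$ large in terms of $\dk$ and $\eps$, so that $\nS/|\galKk|\le\tfrac12+\tfrac{\eps}{8}$ and $(\nS-1)/\wgt\le\tfrac{\eps}{16}$. For $\eps\in(0,1]$ and every $\expC\ge\lceil4/\eps\rceil$, the coefficient of $\log\XX$ is then at most $(1+\tfrac{\eps}{16})(\tfrac12+\tfrac{\eps}{8}+\tfrac1{2\expC})<\tfrac12+\eps$. This threshold is free of $k$.

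The rest is bookkeeping:
\begin{itemize}
\item The $k$-dependence of $\auxp$, $\wgt$ and $C_k$ lands only in the $O_{k,\eps,\expC}(1)$.
\item The side conditions $\XX\ge\XX_0(\dK,\wgt)$ and $\XX\ge(\log d_\Kf)^{\dK}$ hold once $d_F$ is large in terms of $k,\eps,\expC$.
\item The finitely many remaining $F$ are handled by Hermite's theorem and \eqref{eq:chebotarev}.
\end{itemize}
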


The threshold $\expC_0$ is independent of $k$ because the field
$\Kf=F(\rou)$ in which the proof operates has a $\Q$-basis of integers
of
house about $d_F^{1/(2\dk)}$ (Lemma~\ref{lem:generator}), whereas a
general field with the same discriminant enjoys only Minkowski's
bound.
The fact that a bound of the shape of Theorem~\ref{thm:quadratic}
rules out
Landau--Siegel zeros is a standard piece of analytic number theory;
we include the deduction in \S\ref{sec:zeros} for completeness.

\begin{corollary}\label{cor:siegel}
For every number field $k$ there is an effectively computable
constant $c(k)>0$ such that for every quadratic extension $F/k$,
every real zero $\beta\in(0,1)$ of $L(s,\chi_{F/k})$ satisfies
$(1-\beta)\log d_F\ge c(k)$.
\end{corollary}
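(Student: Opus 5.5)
We deduce Corollary~\ref{cor:siegel} from Theorem~\ref{thm:quadratic} by the standard positivity argument, applied to the Dirichlet series of $\zeta_k(s)L(s,\chi)=\zeta_F(s)$ weighted by a smooth cutoff. We fix $\eps=1/8$ and $\expC=\expC_0(1/8)$, and set $\XX=d_F^{\,\expC}$. Let $\beta$ be a real zero of $L(s,\chi)$ with $1-\beta=\lambda/\log d_F$. We must show $\lambda\ge c(k)$. We may assume $\lambda$ is smaller than any fixed constant, and also that $d_F$ exceeds a constant depending on $k$; otherwise $L(s,\chi)$ has no zeros on $[1-c/\log d_F,1)$ for finitely many $F$, or trivially, by effective zero-free results for a single $L$-function.

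Step one uses the coefficients. The Dirichlet coefficients of $-\zeta_F'/\zeta_F$ at prime ideals are $(1+\chi(\fp))\log\Nm\fp\ge 0$, and they vanish exactly on the inert primes. Theorem~\ref{thm:quadratic} says the inert primes carry at most $(\tfrac12+\eps)$ of the logarithmic mass $\sum_{\Nm\fp\le\XX}\log\Nm\fp/\Nm\fp=\log\XX+O_k(1)$ (Mertens for $k$). By partial summation this transfers to a weighted sum over all $\fp$ with $\Nm\fp\le\XX$ with weights $\Nm\fp^{-\sigma}$, $\sigma$ near $1$:
\[
\sum_{\Nm\fp\le\XX}\frac{(1+\chi(\fp))\log\Nm\fp}{\Nm\fp^{\sigma}}\ge(1-2\eps)\sum_{\Nm\fp\le\XX}\frac{\log\Nm\fp}{\Nm\fp^{\sigma}}-O_k(1).
\]
The right side is $\gg\min\{\log\XX,1/(\sigma-1)\}$. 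Prime powers contribute $O_k(1)$.

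Step two brings in the zero. We use the explicit formula with a smooth compactly supported test function supported on $[0,\log\XX]$, for instance a Fej\'er-type kernel $f$. This gives
\[
\sum_{\fa}\Lambda_F(\fa)\Nm\fa^{-1}f(\log\Nm\fa)=\hat f(\text{pole at }1)-\sum_{\rho}\hat f(\rho)+O\bigl(\log d_F\cdot\|f\|\bigr).
\]
Here $\Lambda_F$ is the von Mangoldt function of $F$. Because $f\ge0$ has positive transform, the contributions of all zeros $\rho\ne\beta$ have the sign that only helps us, so they may be dropped. The pole and the zero $\beta$ give $\hat f(1)-\hat f(\beta)$. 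The latter equals $\hat f(1)\bigl(1-O(\lambda\expC)\bigr)$, since $(1-\beta)\log\XX=\expC\lambda$. The Archimedean and discriminant terms are $O(\log d_F)$, which is at most $\eps'\log\XX$ once $\expC$ is large. So the weighted prime sum is at most $O(\expC\lambda+1/\expC)\cdot\log\XX$, up to the normalisation of $f$.

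Step three combines the two. Step one gives a lower bound $\gg(1-2\eps)\log\XX$ for the same smoothed sum, since the smoothing weight can be compared with the sharp cutoff in Theorem~\ref{thm:quadratic} by partial summation. Together with step two this forces $\expC\lambda\gg1$, that is $\lambda\ge c(k)$. Every constant is effective, because Theorem~\ref{thm:quadratic} has an effective $O_{k,\eps,\expC}(1)$.

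The main obstacle is step two's bookkeeping. We must arrange the kernel so that the nonnegativity of the zero sum holds and the $\log d_F$ term stays small relative to $\log\XX$. This is exactly where taking $\expC$ large is used.
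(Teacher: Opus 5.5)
Your overall strategy is the paper's. Mertens for $k$ and Theorem~\ref{thm:quadratic} force the primes with $\chi(\fp)=1$ to carry mass $\gg\log\XX$. A real zero caps that mass at $O(\log d_F+(1-\beta)(\log\XX)^2)$. Taking $\expC$ large then gives the contradiction.

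The difference is in how you detect the zero: you use an explicit formula with a compactly supported kernel. Two of your justifications there do not hold as written.

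First, the display in Step~1, and the comparison ``by partial summation'' in Step~3, do not follow from Theorem~\ref{thm:quadratic} by partial summation. The theorem controls the inert primes only at the single height $\XX=d_F^{\,\expC}$, with a constant depending on $\expC$. Partial summation against a decreasing weight needs control at every $t\le\XX$. Moreover, a weight supported on $[0,\log\XX]$ and vanishing at $\log\XX$ is not bounded below by the sharp sum at $\XX$ alone.

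What rescues the lower bound is monotonicity. The inert sum up to $t$ is at most the inert sum up to $\XX$. So Mertens at $t=\XX^{\gamma}$ with $\gamma>\tfrac12+\eps+1/\expC$, together with \eqref{eq:ramified}, gives split mass $\gg\log\XX$ below $\XX^{\gamma}$. Note that the ramified primes cost up to $\log d_F$, not $O_k(1)$ as you wrote. A kernel such as $f(t)=(1-t/\log\XX)_+$ is $\gg1$ on that range, so the weighted split sum is $\gg\log\XX$. (Your display with the exact factor $1-2\eps$ is not needed.)

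Second, ``$f\ge0$ with positive transform'' is not the condition that lets you drop the zeros. You need $\mathrm{Re}\int_0^\infty f(t)\er^{-(1-\rho)t}\,dt\ge0$ for every $\rho$ in the critical strip. This holds, for instance, when $f$ is convex and decreasing on $[0,\infty)$: a convex decreasing function tending to $0$ has nonnegative cosine transform, and you apply this to $f(t)\er^{-xt}$ with $x=1-\mathrm{Re}\,\rho>0$. It does not hold for a general nonnegative $f$ with nonnegative Fourier transform. Also, your choice $\expC=\expC_0(1/8)$ must be enlarged; the theorem allows any $\expC\ge\expC_0$, and the paper takes $\expC=\max(\expC_0(\tfrac18),8c_1,8)$.

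The paper avoids both issues by using, in effect, the kernel $f(t)=\er^{-t/\log\XX}$, that is, by evaluating $-\zeta_F'/\zeta_F$ at $s_0=1+1/\log\XX$ (Lemma~\ref{lem:bias}). Compact support is unnecessary, because the coefficients are nonnegative. Hence the sharp split sum up to $\XX$ is at most $\tfrac{\er}{2}\bigl(-\zeta_F'/\zeta_F(s_0)\bigr)$. In Stark's Hadamard-product bound, each zero $\rho\ne\beta$ contributes $-\mathrm{Re}\,1/(s_0-\rho)<0$ automatically. The comparison with Theorem~\ref{thm:quadratic} is then between sharp sums at the single height $\XX$, which is exactly what that theorem supplies. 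Your compactly supported kernel buys nothing here.

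Finally, for the finitely many $F$ of bounded discriminant, ``effective zero-free results for a single $L$-function'' should be made concrete. The paper does this as follows: $L(1,\chi)=\kappa_F/\kappa_k>0$ is computable from the class number formula, and the mean value theorem together with Rademacher's explicit bound for $L'$ bounds $1-\beta_F$ from below.
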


For $k=\Q$, this is the theorem of~\cite{OAI}; for general $k$, it
is also Theorem~1.1 of~\cite{FK}, stated there for real Hecke
characters, with a constant that is in principle effective. What is
added here is the threshold $\expC_0(\eps)$ of
Theorem~\ref{thm:quadratic}, which does not depend on $k$, the
argument that $c(k)$ is effectively computable, and
Theorem~\ref{thm:bs} below. The dependence of
$c(k)$ on $k$ enters through one analytic input only, Mertens'
theorem for $k$, and we comment on it in \S\ref{sec:uniform}.
Combined with Stark's reduction, Corollary~\ref{cor:siegel} yields
the effective Brauer--Siegel theorem over $k$.

For a number field $\Kf$, we write $h_\Kf$, $R_\Kf$, and $d_\Kf$ for
its class number,
regulator, and absolute discriminant~\cite[Chapter~I, \S\S5--7]{Ne}.

\begin{theorem}\label{thm:bs}
For every number field $k$, there is an effectively computable
constant $c(k)>0$ such that for every Galois extension $\Kf/k$ with
$\Kf\ne\Q$ the following hold.
\begin{enumerate}
\item[(i)] Every real zero $\beta\in(0,1)$ of $\zeta_\Kf(s)$
satisfies $(1-\beta)\log d_\Kf\ge c(k)$.
\item[(ii)] $h_\Kf R_\Kf\ge
c(k)\,d_\Kf^{1/2}\big/\big((2\pi)^{[\Kf:\Q]}\log d_\Kf\big)$.
\end{enumerate}
In particular,
\[
\log(h_\Kf R_\Kf)\ge\tfrac12\log d_\Kf-\log\log d_\Kf
-[\Kf:\Q]\log 2\pi-C_k,
\]
thus the Brauer--Siegel theorem holds effectively, with constants
depending only on $k$, over every sequence of Galois extensions
$\Kf$ of $k$ with $[\Kf:\Q]/\log d_\Kf\to 0$.
\end{theorem}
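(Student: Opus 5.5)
The plan is to combine Stark's reduction with Corollary~\ref{cor:siegel}, and then turn the zero-free statement (i) into the lower bound (ii) through Stark's residue lemma (Lemma~\ref{lem:residue}) and the analytic class number formula.

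For (i), let $\beta\in(0,1)$ be a real zero of $\zeta_\Kf(s)$. We may assume $1-\beta\le 1/(4\log d_\Kf)$, since otherwise there is nothing to prove. In that range Stark's theorem \cite{St} applies to the normal extension $\Kf/k$. It gives a field $F$ with $k\subseteq F\subseteq\Kf$ and $[F:k]\le2$ such that $\zeta_F(\beta)=0$. There are two cases.
\begin{enumerate}
\item[(a)] If $F=k$, then $\beta$ is a zero of $\zeta_k$. The finitely many real zeros of $\zeta_k$ in $(0,1)$ satisfy $1-\beta\ge c_1(k)>0$, with $c_1(k)$ effectively computable (for instance by a finite verified computation). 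Because $\log d_\Kf\ge\log d_k$ when $k\ne\Q$, this bound is already of the required shape. When $k=\Q$, $\zeta$ has no real zeros in $(0,1)$ at all.
\item[(b)] If $[F:k]=2$, then $\zeta_F=\zeta_k L(s,\chi_{F/k})$. Either $\beta$ is a zero of $\zeta_k$, which is case (a), or $L(\beta,\chi_{F/k})=0$. In the latter case Corollary~\ref{cor:siegel} gives $(1-\beta)\log d_F\ge c(k)$. Since $F\subseteq\Kf$, we have $d_F^{[\Kf:F]}\mid d_\Kf$, so $\log d_F\le\log d_\Kf$ and hence $(1-\beta)\log d_\Kf\ge c(k)$.
\end{enumerate}
Taking the minimum of the constants gives (i). The step I expect to need the most care is checking the exact form of Stark's reduction, namely its threshold (whether it is $1/(4\log d_\Kf)$ or $1/(16\log d_\Kf)$) and its hypothesis that $\Kf/k$ be normal. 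The hypothesis $\Kf\ne\Q$ is there only so that $\log d_\Kf>0$.

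For (ii), let $\kappa_\Kf$ be the residue of $\zeta_\Kf$ at $s=1$. The class number formula gives
\[
h_\Kf R_\Kf=\frac{w_\Kf\,d_\Kf^{1/2}}{2^{r_1}(2\pi)^{r_2}}\,\kappa_\Kf,
\]
and $2^{r_1}(2\pi)^{r_2}\le(2\pi)^{[\Kf:\Q]}$. Stark's Lemma~\ref{lem:residue} bounds $\kappa_\Kf$ from below by $\gg(1-\beta)$ in the presence of a real zero $\beta$ with $1-\beta\le 1/(4\log d_\Kf)$, and by $\gg 1/\log d_\Kf$ when no such zero exists. In the first case (i) gives $1-\beta\ge c(k)/\log d_\Kf$. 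Either way $\kappa_\Kf\ge c'(k)/\log d_\Kf$, with a possible factor depending only on $[\Kf:\Q]$ that is absorbed or accounted for in the lemma. This yields (ii).

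The final displayed inequality follows by taking logarithms in (ii). The Brauer--Siegel statement then follows by combining it with the standard effective upper bound $h_\Kf R_\Kf\ll d_\Kf^{1/2}(\log d_\Kf)^{[\Kf:\Q]-1}$, once $[\Kf:\Q]/\log d_\Kf\to0$.
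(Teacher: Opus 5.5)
Your proof of (i) and (ii) follows the paper's. It uses Stark's reduction, then the three cases $1-\beta>1/(4\log d_\Kf)$, $\zeta_k(\beta)=0$ and $L(\beta,\chi_{F/k})=0$ with $d_k\le d_F\le d_\Kf$, and then Lemma~\ref{lem:residue} together with the class number formula and $2^{r_1}(2\pi)^{r_2}\le(2\pi)^{[\Kf:\Q]}$. On the step you were unsure of: $1/(4\log d_\Kf)$ is the correct threshold for a normal $\Kf/k$ (Lemma~\ref{lem:stark}). The value $1/(16\log d_\Kf)$ belongs to Stark's tower version, and either choice only changes the constant.

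Two points in this part need fixing. First, delete the hedge about ``a possible factor depending only on $[\Kf:\Q]$.'' Such a factor could not be absorbed into $c(k)$, because $[\Kf:\Q]$ is unbounded over the Galois extensions of $k$. The uniformity of (ii) rests exactly on the constant $c_2$ of Lemma~\ref{lem:residue} being absolute. Second, ``a finite verified computation'' should be replaced by an actual argument that your $c_1(k)$ is effective. The paper's argument is as follows. Zeros $\beta'<\tfrac12$ of $\zeta_k(s)$ are harmless. For $\beta'\ge\tfrac12$, apply the mean value theorem to the entire function $(s-1)\zeta_k(s)$, which vanishes at $\beta'$ and equals $\kappa_k>0$ at $s=1$. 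Combined with an explicit bound for its derivative on $[\tfrac12,1]$, this gives an explicit lower bound for $1-\beta'$.

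The one step that fails as written is the last one. The bound $h_\Kf R_\Kf\ll d_\Kf^{1/2}(\log d_\Kf)^{[\Kf:\Q]-1}$ loses $([\Kf:\Q]-1)\log\log d_\Kf$. That loss is not $o(\log d_\Kf)$ under the sole hypothesis $[\Kf:\Q]/\log d_\Kf\to0$. For example, take $\Kf=\Q(\rou)$ with $\rou$ a primitive $\auxp$-th root of unity and $\auxp$ prime. Then $\log d_\Kf=(\auxp-2)\log\auxp$ and $(\auxp-2)\log\log d_\Kf\sim\log d_\Kf$, so your bound only yields $\log(h_\Kf R_\Kf)\le(\tfrac32+o(1))\log d_\Kf$.

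Use Louboutin's bound $\kappa_\Kf\le(\er\log d_\Kf/(2[\Kf:\Q]-2))^{[\Kf:\Q]-1}$ instead. Its logarithm is at most $[\Kf:\Q]\log(\er\log d_\Kf/[\Kf:\Q])$, which is $o(\log d_\Kf)$ when $[\Kf:\Q]/\log d_\Kf\to0$. Together with $\log w_\Kf=O(\log[\Kf:\Q])$, this gives the matching upper bound $\log(h_\Kf R_\Kf)\le(\tfrac12+o(1))\log d_\Kf$.
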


The proof is given in \S\ref{sec:zeros}. The matching upper bound
\[
\log(h_\Kf R_\Kf)\le\tfrac12\log d_\Kf+O([\Kf:\Q]\log\log d_\Kf)
\]
is classical and effective, for instance by Louboutin's bound
\[
\kappa_\Kf\le\big(\er\log d_\Kf/(2[\Kf:\Q]-2)\big)^{[\Kf:\Q]-1}
\]
for the residue~\cite[Theorem~1]{Lo}. Thus, the essential content of
Theorem~\ref{thm:bs} is the lower bound. Stark's effective cases of
the Brauer--Siegel theorem
required the field to have no quadratic subfield; Theorem~\ref{thm:bs}
removes this requirement for Galois extensions of $k$ at the cost of a
constant depending on $k$, and the dependence is through the real
zeros of $\zeta_k(s)$ alone (\S\ref{sec:uniform}).

Theorem~\ref{thm:quadratic} is the quadratic case of a uniform
upper bound for Chebotarev sums. To formulate it, we first introduce
some terminology. Let $\Kf/k$ be a Galois extension
of number fields with group $\galKk\defeq\Gal(\Kf/k)$, and put
\[
\dk\defeq[k:\Q],\qquad \dK\defeq[\Kf:\Q],\qquad |\galKk|=\dK/\dk .
\]
Let $\Sset\subset\galKk$ be a subset containing the identity. For a
prime $\fp$ of $k$ unramified in $\Kf$ and a prime $\fP$ of $\Kf$
above $\fp$, we use $\Frob_{\,\fP}$ to denote the Frobenius element in
$\galKk$.
The set $\cls_\fp\defeq\{\Frob_{\,\fP}:\fP\mid\fp\}$ is a conjugacy
class of $\galKk$. We put
\be\label{eq:weight}
\wt_\fp\defeq\frac{|\Sset\cap\cls_\fp|}{|\cls_\fp|}
\quad\text{($\fp$ unramified in $\Kf$)},\qquad
\wt_\fp\defeq 0\quad\text{($\fp$ ramified in $\Kf$)}.
\ee
Since $\galKk$ permutes the primes above $\fp$ transitively, and
$\Frob_{\,g\fP}=g\Frob_{\,\fP}g^{-1}$ for all $g\in\galKk$, the weight
$\wt_\fp$ is the proportion of the primes $\fP\mid\fp$ for which
$\Frob_{\,\fP}\in\Sset$. When $\Sset$ is a union of conjugacy classes,
$\wt_\fp$ is the indicator of $\cls_\fp\subset\Sset$. By the
Chebotarev density theorem, for fixed $\Kf$ and $\Sset$, we have
\be\label{eq:chebotarev}
\sum_{\Nm\fp\le\XX}\wt_\fp\,\frac{\log\Nm\fp}{\Nm\fp}
=\sum_{\cls}\frac{|\Sset\cap\cls|}{|\cls|}\cdot\frac{|\cls|}{|\galKk|}
\log\XX+O_\Kf(1)
=\frac{|\Sset|}{|\galKk|}\log\XX+O_\Kf(1),
\ee
where the second sum is taken over the conjugacy classes of $\galKk$.
The underlying result of the paper is an upper bound of this
strength that is \emph{uniform in $\Kf$}.

\begin{theorem}\label{thm:main}
For every $\eps>0$ and $\dK\ge 1$, there is a number
$\expC_0=\expC_0(\eps,\dK)$ such that the following holds. Let $\Kf/k$
be a Galois extension of number fields with $[\Kf:\Q]=\dK$ and
group~$\galKk$, let $\Sset\subset\galKk$ contain the identity, let
$\expC\ge\expC_0$ and $\XX\defeq d_\Kf^{\,\expC}$. Then
\[
\sum_{\Nm\fp\le\XX}\wt_\fp\,\frac{\log\Nm\fp}{\Nm\fp}
\le\Big(\frac{|\Sset|}{|\galKk|}+\eps\Big)\log\XX
+O_{\dK,\eps,\expC}(1).
\]
\end{theorem}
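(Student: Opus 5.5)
The plan is to follow the argument of~\cite{Ke}, transplanted to $\Kf/k$. There is one variable for each $\sigma\in\Sset$, so we work in $\nS\defeq|\Sset|$ variables $z_\sigma$. The evaluation points are $(\sigma\theta)_{\sigma\in\Sset}\in\Kf^{\nS}$, with $\theta$ running over a box $\cube$ of algebraic integers. To build the box, I would fix a $\Q$-basis $\omega_1,\dots,\omega_{\dK}$ of $\cO_\Kf$ of house $\ll d_\Kf^{O(1)}$ (Minkowski suffices in general) and set
\[
\cube=\Big\{\textstyle\sum a_i\omega_i:\ |a_i|\le \bxs\Big\}.
\]
By Siegel's lemma I construct a nonzero polynomial $\poly\in\Z[z_\sigma:\sigma\in\Sset]$ of degree $\le\dgb$ in each variable, with small coefficients, vanishing at the points $(\sigma\theta)_\sigma$ for all $\theta\in\cube$. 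The parameters are balanced so that the number of conditions, $\asymp\bxs^{\dK}$ times $\dK$ (one per coordinate over $\Q$), is at most half the number of unknowns $(\dgb+1)^{\nS}$.

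\textbf{Arithmetic step.} Let $\fp$ be an unramified prime of norm $\le\XX$, and let $\fP\mid\fp$ satisfy $\Frob_\fP\in\Sset$. For $\sigma=\Frob_\fP$ we have $\sigma\theta\equiv\theta^{\Nm\fp}\pmod{\fP}$, and for $\sigma=\mathrm{id}$ the congruence is trivial. Therefore the one-variable polynomials
\[
Q_\theta(t)=\poly\big((\sigma t)_\sigma\big)
\]
evaluated at shifted arguments pick up divisibility by $\fP$. Concretely, I would evaluate $\poly$ at points $(\sigma\eta)_\sigma$ with $\eta$ in a slightly larger box (or $\eta=\theta^{m}$-type points, as in~\cite{Ke}); where $\poly$ does not vanish, the norm of the value is a nonzero integer divisible by $\prod\fP$ over those primes. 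The weight $\wt_\fp$ appears because only the fraction $\wt_\fp$ of the primes $\fP\mid\fp$ contribute, and each contributes $\log\Nm\fp$ with $\Nm\fP=\Nm\fp^{f}$. Summing the $\fP$-adic valuations gives a lower bound for $\log|\Nm(\text{value})|$ of size roughly
\[
(\#\text{points})\cdot\sum\wt_\fp\frac{\log\Nm\fp}{\Nm\fp}\cdot(\text{scale}).
\]
The Liouville upper bound comes from the heights: $\dK(\log\text{coeff}+\dgb\log\text{house})$. Comparing the two, and letting $\bxs,\dgb\to\infty$ with $\log\XX=\expC\log d_\Kf$ dominating the house losses (this is where $\expC_0$ enters), gives a bound of the shape $\frac{\nS}{\dK}\cdot\dk\log\XX(1+\eps)$, which equals $\frac{|\Sset|}{|\galKk|}\log\XX$. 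This matches the constant announced in the introduction.

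\textbf{Nonvanishing.} We must guarantee a point of the enlarged box where $\poly$ does not vanish. For this I would invoke Masser's zero estimate for the lattice $\lat=\{(\sigma\theta)_{\sigma\in\Sset}\}$, a rank-$\dK$ subgroup of $\C^{\nS}$ under a fixed complex embedding. Masser's theorem says that a polynomial of degree $\dgb$ vanishing on $\lat(\bxs')$ forces $\bxs'^{\mu}\ll\dgb^{\nS}$, where $\mu$ is Masser's invariant $\min_V \rk(\lat\cap V)/\dim V$-type quantity over algebraic subgroups. So the key input is Proposition~\ref{prop:mu}, $\mu(\lat)=\dK/\nS$, computed by averaging over the Galois group of the normal closure. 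With $\mu=\dK/\nS$, the count $\bxs'^{\dK}\gg\dgb^{\nS}$ is exactly what Siegel's lemma permits up to constants, so enlarging the box by a fixed factor yields a nonvanishing point.

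\textbf{Main obstacle.} I expect the main obstacle to be this last step: establishing $\mu=\dK/\nS$ (the lower bound on the rank of $\lat\cap V$ for every proper subspace or subtorus $V$) and tracking the constant losses from Masser's theorem so that they only cost $\eps$ as $\dgb\to\infty$. I would handle this by taking $\dgb$ large in terms of $\eps,\dK$, and then $\expC_0$ large.
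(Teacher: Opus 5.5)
There is a genuine gap in your arithmetic step. Your Liouville comparison needs a nonzero value $\poly((\tau\eta)_{\tau\in\Sset})$, at a point $\eta$ outside $\cube$, to be divisible by $\fQ_{\fp,\sigma}$ to multiplicity about $\dgb/\Nm\fp$ for every admissible $\fp$ with $\Nm\fp\le\XX$ at once. That multiplicity is the only possible source of the factor $1/\Nm\fp$ in $\sum\wt_\fp\log\Nm\fp/\Nm\fp$, and nothing in your construction produces it. A polynomial chosen by Siegel's lemma to vanish at $(\tau\theta)_\tau$, $\theta\in\cube$, carries no information about its residues modulo $\fP$ at other points. The congruence $\sigma\eta\equiv\eta^{\Nm\fp}\bmod\fP$ is only first order: it replaces the $\sigma$-coordinate by $\eta^{\Nm\fp}$ and nothing more. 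Write $\poly=\sum_{\va}c_{\va}G_{\va}$ in the basis $G_{\va}=z_1^{v}(z_1^{\Nm\fp}-z_\sigma)^{b}\vz^{\va^*}$ of Lemma~\ref{lem:basis}. Then the value at $\eta$ lies in $\fQ_{\fp,\sigma}^{\,e}$ only if the terms with $b<e$ together contribute an element of $\fQ_{\fp,\sigma}^{\,e}$. Vanishing on $\cube$ imposes nothing of that kind, let alone for all $\fp$ simultaneously. The ``shifted arguments'' and the heuristic lower bound (number of points)$\times$(sum)$\times$(scale) are not a mechanism, so the lower side of your comparison is unsupported.

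The paper never evaluates at a new point; it runs the logic in reverse and uses the Frobenius congruence to \emph{construct} the polynomial. In the basis $G_{\va}$, the values at all the points automatically lie in $\fQ_{\fp,\sigma}^{\lfloor\alpha_1/\Nm\fp\rfloor}$ (Lemma~\ref{lem:vanish}), so the residue vectors modulo $\modl$ take few values (Lemma~\ref{lem:residues}). If the prime mass were large, pigeonhole would give a nonzero $\poly\in\PolyD$ whose values at the points lie in $\modl$ and have house below $\Nm\modl^{1/\dK}$, hence vanish (Lemma~\ref{lem:pigeon}). The quantity $\Ew_\fp=\sum_{\va\in\idx}\lfloor\alpha_1/\Nm\fp\rfloor\approx\Ic/\Nm\fp$ is where $1/\Nm\fp$ comes from. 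Masser's estimate with $\mu(\lat)=\dK/\nS$ is used only as non-existence: no such $\poly$ of degree below $((\bxs-1)/\nS)^{\dK/\nS}$ exists (Corollary~\ref{cor:zero}), which bounds the mass (Proposition~\ref{prop:main}). Theorem~\ref{thm:main} then follows by taking $\wgt$ and $\expC_0$ of size $\dK/\eps$, using Minkowski's bound for $\lamK$, and applying Hermite's theorem for small $d_\Kf$.

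Your degree shape, degree at most $\dgb$ in each variable, would fail even in that scheme. The substitution $z_1^{\Nm\fp}\mapsto z_\sigma$ leaves the module. Also, the density $\nS/|\galKk|$ would come out multiplied by $W/\sum_{\va}\alpha_1=\nS$ rather than by $1+(\nS-1)/\wgt$. The weighted degree, with $z_1$ of weight one and the other variables of weight $\wgt$, is what yields $|\Sset|/|\galKk|+\eps$. Finally, Masser's theorem reads $\deg\poly\ge(\bxs'/\nS)^{\mu}$, not $\bxs'^{\mu}\ll\dgb^{\nS}$. Here $\mu$ is the minimum over proper linear subspaces $V$ of $(\dK-\rk(\lat\cap V))/(\nS-\dim V)$, not a quantity over subtori.
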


By \eqref{eq:chebotarev} the constant $|\Sset|/|\galKk|$ is best
possible. Theorem~\ref{thm:main} is a consequence of the following
more precise inequality, in which the dependence on $\Kf$ is isolated
in one quantity. Let $\lamK\ge 1$ be the least number such that
$\cO_\Kf$ contains $\dK$ elements, linearly independent over $\Q$, all
of whose conjugates have absolute value at most $\lamK$. Minkowski's
theorem gives $\lamK\le d_\Kf^{1/2}$ (see \eqref{eq:minkowski}
below).

\begin{theorem}\label{thm:precise}
Let $\Kf/k$, $\galKk$, $\Sset$, $\dK$, $\dk$ be as above and let
$\wgt\ge 2$ be an integer. There is $\XX_0=\XX_0(\dK,\wgt)$ such that
for all $\XX\ge\max\big(\XX_0,(\log d_\Kf)^\dK\big)$, we have
\be\label{eq:precise}
\sum_{\Nm\fp\le \XX}\wt_\fp\,\frac{\log\Nm\fp}{\Nm\fp}
\le\Big(1+\frac{|\Sset|-1}{\wgt}\Big)
\Big(\frac{|\Sset|}{|\galKk|}\log \XX+\dk\log\lamK\Big)
+O_{\dK,\wgt}(1).
\ee
\end{theorem}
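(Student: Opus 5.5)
The plan is to run the argument of \cite{Ke} inside $\Kf$, with one variable $x_\sigma$ for each $\sigma\in\Sset$, and to compare an Archimedean upper bound with a non-Archimedean lower bound for the norm of a single nonzero algebraic integer.

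\emph{Points and the auxiliary polynomial.} Fix $\omega_1,\dots,\omega_\dK\in\cO_\Kf$, linearly independent over $\Q$, with all conjugates of absolute value at most $\lamK$. For an integer $\bxs$, let $\cube$ be the set of $\theta=\sum a_i\omega_i$ with $a_i\in\Z$ and $0\le a_i<\bxs$, and attach to $\theta$ the point $z_\theta\defeq(\sigma\theta)_{\sigma\in\Sset}\in\C^{|\Sset|}$. The map $\theta\mapsto z_\theta$ is additive. Its image is a lattice whose rank as an abelian group is $\dK$, sitting in $\C^{|\Sset|}$ after fixing an embedding of the normal closure. For a nonzero $\poly\in\Z[x_\sigma]$ of total degree $\le\dgb$ with small coefficients, every value $\poly(z_\theta)$ lies in $\cO_\Kf$.

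The arithmetic input is the Frobenius congruence. Let $\fP\mid\fp$ be unramified with $\Frob_{\,\fP}=\sigma\in\Sset$. Then $\sigma\theta\equiv\theta^{\Nm\fp}\pmod\fP$, so modulo $\fP$ the point $z_\theta$ lies on the variety $x_\sigma=x_1^{\Nm\fp}$. I will choose $\poly$ so that this produces divisibility, with exponents weighted by $\wgt$. Multiplying over the conjugates and over $\theta\in\cube$ then gives the following: the norm of a suitable nonzero value, or of a product of values, is divisible by $\prod_{\Nm\fp\le\XX}\Nm\fp^{\,m_\fp}$, where $\sum_\fp m_\fp\log\Nm\fp$ reproduces $\wt_\fp$ times the count of points. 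Here $\wt_\fp$ enters because exactly the proportion $\wt_\fp$ of the $\fP\mid\fp$ have Frobenius in $\Sset$. The Archimedean size of the same norm is at most $\exp\big(\dK(\dgb\log(\bxs\lamK)+O(\dgb))\big)$ per point. Comparing $\log$ of both sides, and dividing by the natural count, yields a bound of the shape of~\eqref{eq:precise} with main term $\frac{|\Sset|}{|\galKk|}\log\XX+\dk\log\lamK$. The factor $\dk$ appears when we pass from norms to $\Q$ back to primes of $k$, and the loss factor $1+(|\Sset|-1)/\wgt$ comes from the multiplicity/degree balance in the choice of $\poly$. The hypothesis $\XX\ge(\log d_\Kf)^\dK$ is used to absorb the ramified primes and the $O(\dgb)$ terms into $O_{\dK,\wgt}(1)$.

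\emph{Main obstacle: nonvanishing.} The step I expect to be hardest is guaranteeing that $\poly$ does not vanish on all of $z_{\cube}$; without this the comparison above is empty. Here I will invoke Masser's zero estimate. A polynomial of degree $\dgb$ vanishing on $z_\theta$ for all $\theta$ in a box of side $\bxs$ forces $\bxs^{\mu}\ll\dgb^{|\Sset|}$, where $\mu$ is Masser's invariant of the lattice $\{z_\theta\}$. So I need $\mu=\dK/|\Sset|$, which is Proposition~\ref{prop:mu}. The idea behind it is to average, over the Galois group of the normal closure, the ranks of the projections of the lattice to coordinate subspaces; transitivity on embeddings makes every coordinate carry rank $\dK/|\Sset|$ on average, so no subspace is exceptionally degenerate. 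Choosing $\bxs\asymp\dgb^{|\Sset|/\dK}$ just beyond Masser's threshold, and letting $\dgb\to\infty$ with $\XX$, fixes the parameters. Every remaining dependence is on $\dK$ and $\wgt$ alone, which gives the threshold $\XX_0(\dK,\wgt)$.
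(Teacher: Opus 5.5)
There is a genuine gap, and it sits in the step you dispose of in one sentence: ``I will choose $\poly$ so that this produces divisibility.'' The whole proof lives there, and your accounting does not match the construction that works.

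The obvious explicit choice fails. Forcing $\poly(z_\theta)\in\fQ_{\fp,\sigma}^{\,b_\fp}$ through a factor $(x_1^{\Nm\fp}-x_\sigma)^{b_\fp}$ costs degree $b_\fp\Nm\fp$. Your comparison needs $b_\fp\approx\dgb/\Nm\fp$ for all $\fp\le\XX$ at once, which would cost degree about $\dgb$ times the number of primes.

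The workable choice is the pigeonhole one, and then the comparison changes character. In Lemma~\ref{lem:pigeon} the polynomial has $\house{\poly}_\hB$ as large as $\Nm\modl^{1/\dK}/(2\rr)$. Its values are therefore bounded only by $\tfrac12\Nm\modl^{1/\dK}$, not by $\exp(\dgb\log(\bxs\lamK)+O(\dgb))$ as you assume for bounded coefficients. Comparing the norm of a value with $\Nm\modl$ then shows only that the value is zero. The argument is thus by contradiction: if the prime mass were too large, the pigeonhole would produce a nonzero $\poly\in\PolyD$ vanishing at every $\theta_\vn^{\Sset}$, which Corollary~\ref{cor:zero} forbids.

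The inequality comes from the count that makes the pigeonhole work, condition~\eqref{eq:pigeon}, and your sketch supplies none of its ingredients:
\begin{itemize}
\item coefficients in $\cO_\Kf$ rather than $\Z$, so that the roughly $\hgt^{\dK\rr}$ candidates balance the $\Nm\modl^{\,\rr}$ residue vectors to leading order;
\item the module $\PolyD$ of weighted degree at most $\dgb$, with $z_1$ of weight one and the other variables of weight $\wgt$;
\item for each pair $(\fp,\sigma)$ separately, the unitriangular change of basis to $z_1^{v}(z_1^{\Nm\fp}-z_\sigma)^{b}\vz^{\va^*}$ (Lemma~\ref{lem:basis}). This cuts the residue vectors modulo $\fQ_{\fp,\sigma}^{\,m_\fp}$ down to $\Nm\fQ_{\fp,\sigma}^{\,m_\fp\rr-\Ew_\fp}$ (Lemma~\ref{lem:residues}).
\end{itemize}
The weights keep the substitution $z_1^{\Nm\fp}\mapsto z_\sigma$ inside one module for every $\Nm\fp\ge\wgt$ simultaneously. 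That is what lets one family of polynomials serve all primes, without any single polynomial being divisible by anything in advance.

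Several points of your bookkeeping also need correcting.
\begin{itemize}
\item The gain is $\Ew_\fp=\sum_{\va\in\idx}\lfloor\alpha_1/\Nm\fp\rfloor$, a sum over the $\rr$ basis monomials, not over the $\bxs^{\dK}$ points. Multiplying values over $\theta\in\cube$ gains nothing, since the zero estimate supplies only one nonzero value.
\item The number of points enters only through Masser's threshold $\log\dgb=\frac{\dK}{\nS}\log\bxs+O(1)$, which converts $\dk\log\bxs$ into $\frac{|\Sset|}{|\galKk|}\log\XX$. Your $\bxs^{\mu}\ll\dgb^{|\Sset|}$ should be $\bxs^{\mu}\ll\dgb$.
\item The factor $1+(|\Sset|-1)/\wgt$ is $W/\Ic$ from Lemma~\ref{lem:count}. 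The archimedean cost $W\log\hB$ counts the full degree $|\va|$ of every monomial, while the gain counts only its $z_1$-degree.
\item The hypothesis $\XX\ge(\log d_\Kf)^{\dK}$ is not about ramified primes, which have $\wt_\fp=0$. It gives $\log\lamK\le\bxs$, so that $\rr\log(4\dK\rr\lamK)/\Ic=O(1)$.
\item Nonvanishing is not the hard step; it is Masser's theorem together with Proposition~\ref{prop:mu}.
\end{itemize}

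If you want to keep the framing of a single nonzero algebraic integer, take a determinant rather than a value. By Corollary~\ref{cor:zero}, some $\rr$ of the points make the matrix of values of the monomials of $\PolyD$ nonsingular. Its determinant is a nonzero element of $\cO_\Kf$ of house at most $\rr!\,\hB^{W}$. The change of basis of Lemma~\ref{lem:basis} does not alter the determinant and places it in $\fQ_{\fp,\sigma}^{\,\Ew_\fp}$ for every pair, which gives the negation of~\eqref{eq:pigeon} directly. That route still needs the weighted module and the change of basis that your proposal leaves out.
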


The identity element must belong to $\Sset$ because the argument
spends one
variable on $\theta$ itself, to which the Frobenius congruence
$\theta^{\Nm\fp}\equiv\sigma(\theta)$ ties the variable
$\sigma(\theta)$. The hypothesis is a normalization, since the
argument with $\emb_0(\theta)$ as the distinguished variable bounds
the mass of $\Sset\emb_0^{-1}$ for any $\emb_0\in\Sset$; see
Remark~\ref{rem:compare}.

Kominers~\cite{Ko} proves, for $\Kf/\Q$ Galois of degree $\dK$ and
a conjugacy class $\cls$ of size $c$, that
\[
\ssum{p\le\XX\\ \cls_p=\cls}\frac{\log p}{p}
\le\frac{c}{\dK}\Big(\log\XX+\frac12\log d_\Kf\Big)+\log\log 3\XX+O(c)
\qquad(\XX\ge 2)
\]
holds with an absolute implied constant.
Summed over the classes meeting $\Sset$ with the weights
\eqref{eq:weight}, this result implies our Theorem~\ref{thm:main}
for $k=\Q$ with $\eps$ replaced by $1/(2\expC)$, no threshold, and no
hypothesis
on $\Sset$. Theorem~\ref{thm:precise} is the analogue produced by
the Masser route; its field term $\dk\log\lamK$ plays the part of
$\frac{c}{2\dK}\log d_\Kf$, and its factor $1+(|\Sset|-1)/\wgt$ is the
price of simple vanishing in $|\Sset|$ variables, which the
derivatives in the determinant method avoid. We return to the
comparison in Remark~\ref{rem:compare}.

\subsection{Outline of the proof}\label{sec:outline}

Theorem~\ref{thm:precise} is the heart of the paper;
Theorems~\ref{thm:main} and~\ref{thm:quadratic} follow from it by
choices of the parameters. Its proof compares two counts of
polynomials, and it has five ingredients.

\medskip\noindent\textbf{The points.}
Fix $\omega_1,\dots,\omega_\dK\in\cO_\Kf$, linearly independent over
$\Q$, of house at most $\lamK$. For an integer $\bxs$ and
$\vn\in\cube\defeq\{0,\dots,\bxs-1\}^{\dK}$ put
$\theta_\vn\defeq\sum_t n_t\omega_t$, and attach to $\theta_\vn$ the
point
\[
\theta_\vn^{\Sset}\defeq\big(\emb(\theta_\vn)\big)_{\emb\in\Sset}
\in\C^{\nS},
\]
one coordinate for each element of $\Sset$ read as an embedding of
$\Kf$. These $\bxs^{\dK}$ points lie in the lattice $\lat$ generated
by $\omega_1^{\Sset},\dots,\omega_\dK^{\Sset}$, which has rank $\dK$
in $\C^{\nS}$, and every coordinate has house at most
$\hB\defeq\dK\lamK\bxs$.

\medskip\noindent\textbf{The zero estimate.}
A nonzero polynomial in $\nS$ variables that vanishes at all
$\bxs^{\dK}$ points has degree at least $((\bxs-1)/\nS)^{\dK/\nS}$
(Corollary~\ref{cor:zero}). This is Masser's theorem, whose exponent
is his invariant $\mu(\lat)$, a minimum over the subspaces
$V\subset\C^{\nS}$ of $(\dK-\rk(\lat\cap V))/(\nS-\dim V)$, and the
only input from the field is the computation $\mu(\lat)=\dK/\nS$
(Proposition~\ref{prop:mu}). It amounts to the statement that any
$\nT$ of the $\dK$ embeddings of $\Kf$ see at least the fraction
$\nT/\dK$ of every rational subspace of $\Kf$, and it is proved by
averaging over the Galois group of the normal closure of $\Kf$. We
fix the degree bound $\dgb$ just below Masser's threshold, so that no
nonzero polynomial of degree at most $\dgb$ vanishes at all the
points.

\medskip\noindent\textbf{The Frobenius congruence.}
Let $\fp$ be a prime of $k$ unramified in $\Kf$ with $\Nm\fp\ge\wgt$,
and let $\sigma\in\Sset\cap\cls_\fp$. Then
$\theta^{\Nm\fp}\equiv\sigma(\theta)$ modulo the product
$\fQ_{\fp,\sigma}$ of the primes above $\fp$ with Frobenius $\sigma$,
an ideal of norm $\Nm\fp^{\,|\galKk|/|\cls_\fp|}$
(Lemma~\ref{lem:frob}). The point is that $\sigma(\theta_\vn)$ is
again a coordinate of $\theta_\vn^{\Sset}$, because $\sigma\in\Sset$;
this is where the hypothesis on $\Sset$ enters, and the norm of
$\fQ_{\fp,\sigma}$ is where the weight $\wt_\fp$ comes from. We work
in the module $\PolyD$ of polynomials over $\cO_\Kf$ in the variables
$z_\emb$, $\emb\in\Sset$, of weighted degree at most $\dgb$, where
$z_1$ has weight one and every other variable has weight $\wgt$. The
module has a basis of polynomials
$z_1^{v}(z_1^{\Nm\fp}-z_\sigma)^{b}\vz^{\va^*}$
(Lemma~\ref{lem:basis}), whose values at the points lie in
$\fQ_{\fp,\sigma}^{\,b}$ with $b=\lfloor\alpha_1/\Nm\fp\rfloor$
(Lemma~\ref{lem:vanish}); the weights guarantee that the
substitution $z_1^{\Nm\fp}\mapsto z_\sigma$ does not leave $\PolyD$.
Hence the vector of values of $f\in\PolyD$ at the points, reduced
modulo $\modl\defeq\prod\fQ_{\fp,\sigma}^{\,m_\fp}$, takes at most
$\Nm\modl^{\,\rr}\prod\Nm\fQ_{\fp,\sigma}^{-\Ew_\fp}$ values, where
$\rr$ is the rank of $\PolyD$ and $\Ew_\fp$ is the sum of
$\lfloor\alpha_1/\Nm\fp\rfloor$ over its basis monomials
(Lemma~\ref{lem:residues}).

\medskip\noindent\textbf{The pigeonhole.}
The polynomials $f\in\PolyD$ whose coefficients satisfy
$\house{c_{\va}}\hB^{|\va|}\le\hgt$ number at least
$(\hgt/(\dK\lamK))^{\dK\rr}\hB^{-\dK W}$, where $W$ is the sum of the
degrees of the basis monomials. When there are more such polynomials
than residue vectors, two of them agree modulo $\modl$ at every point,
and their difference $\poly$ takes at every point a value in $\modl$
of house less than $\Nm\modl^{1/\dK}$, which is zero. Taking
logarithms, this happens as soon as
\[
\frac1{\dk}\sum_{\fp}\wt_\fp\,\Ew_\fp\log\Nm\fp
>W\log\hB+\rr\log(4\dK\rr\lamK)
\]
(Lemma~\ref{lem:pigeon}), the sum over the admissible primes with
$\Nm\fp\le\dgb$. The weight arises here, as
\[
\sum_{\sigma\in\Sset\cap\cls_\fp}\log\Nm\fQ_{\fp,\sigma}
=|\galKk|\,\wt_\fp\log\Nm\fp .
\]

\medskip\noindent\textbf{The comparison.}
The zero estimate forbids $\poly$, so the inequality fails. Up to
lower order terms, $\rr$, $W$, and $\Ew_\fp$ are integrals over a
simplex, with $W=(1+(\nS-1)/\wgt)\Ic$ and $\Ew_\fp=\Ic/\Nm\fp$ for
$\Ic\defeq\dgb^{\nS+1}/((\nS+1)!\,\wgt^{\nS-1})$
(Lemma~\ref{lem:count}), and dividing by $\Ic$ leaves
\[
\sum_{\Nm\fp\le\dgb}\wt_\fp\,\frac{\log\Nm\fp}{\Nm\fp}
\le\Big(1+\frac{\nS-1}{\wgt}\Big)
\big(\dk\log\bxs+\dk\log\lamK\big)+O_{\dK,\wgt}(1)
\]
(Proposition~\ref{prop:main}). The lattice has rank $\dK$, but there
are only $\nS$ variables, so $\log\dgb=\frac{\dK}{\nS}\log\bxs+O(1)$
and $\dk\log\bxs=\frac{\nS}{|\galKk|}\log\dgb+O(1)$; this is the
Chebotarev density. The term $\dk\log\lamK$ is the price of the
field, at most $\frac{\dK}{2}\log d_\Kf$ by Minkowski's theorem and
at most $\frac12\log d_F+C_k$ in the quadratic case
(Lemma~\ref{lem:generator}), and $\eps\log\XX$ absorbs it once $\XX$
is a large power of the discriminant. The deductions of
Corollary~\ref{cor:siegel} and Theorem~\ref{thm:bs} are classical. A
real zero $\beta$ of
$L(s,\chi_{F/k})$ depletes the split primes (Lemma~\ref{lem:bias}),
Mertens' theorem for $k$ says that all primes carry mass $\log\XX$,
and Theorem~\ref{thm:quadratic} caps the inert primes at
$(\tfrac12+\eps)\log\XX$, which leaves no room unless
$(1-\beta)\log d_F\gg 1$. For $\Kf/k$ Galois, Stark's reduction
places a real zero of $\zeta_\Kf(s)$ near one on a field $F$ with
$k\subseteq F\subseteq\Kf$ and $[F:k]\le 2$, where
Corollary~\ref{cor:siegel} or the zeta function of $k$ bounds it
away from one, and Stark's lower bound for the residue of
$\zeta_\Kf(s)$ in terms of its real zeros gives Theorem~\ref{thm:bs}.

\medskip

The paper is organized as follows.
\begin{center}
\renewcommand{\arraystretch}{1.15}
\begin{tabular}{@{}ll@{}}
\S\ref{sec:quadratic}&
Theorem~\ref{thm:quadratic} from Theorem~\ref{thm:precise}\\
\S\ref{sec:zeros}&
Corollary~\ref{cor:siegel} and Theorem~\ref{thm:bs}\\
\S\ref{sec:lattice}&
the lattice of points and the zero estimate\\
\S\ref{sec:poly}&
the Frobenius congruence and the pigeonhole step\\
\S\ref{sec:proof}&
Theorems~\ref{thm:main} and~\ref{thm:precise}\\
\S\ref{sec:remarks}&
comparison with the determinant method
\end{tabular}
\end{center}

\subsection{Conventions}

The letter $p$ always denotes a rational prime, $\fp$ a prime of $k$,
and $\fP$ a prime of $\Kf$. We write $\cO_k$, $\cO_\Kf$ for the rings
of integers, $d_k$, $d_\Kf$ for the absolute discriminants, and $\Nm$
for the absolute norm of ideals, the ring being clear from context;
for $x\in\Kf^\times$ we put $\Nm x\defeq|N_{\Kf/\Q}(x)|$. Constants
$C_\dK$ depend only on $\dK$, constants $C_k$ only on $k$, and they
may change from line to line. We use standard asymptotic notation; a
subscript on $O$ or $\ll$ names the parameters on which the implied
constant depends. For a finite set $\cA$, we use $|\cA|$ to denote its
cardinality. We fix an embedding of $\Kf$ in $\C$ and regard $\Kf$ as
a subfield of $\C$; the elements of $\galKk$ are then embeddings
$\Kf\to\C$. For $\theta\in\Kf$ we write
\[
\house{\theta}\defeq\max_{\iota}|\iota(\theta)|,
\]
the maximum over the $\dK$ embeddings $\iota\colon\Kf\to\C$; for
$\theta\in k$ this is the maximum over the embeddings of $k$. Then
$\house{\cdot}$ is submultiplicative and subadditive, and any nonzero
$\theta\in\cO_\Kf$ has $\house{\theta}\ge 1$ since its norm is a
nonzero integer.

\section{Quadratic extensions}\label{sec:quadratic}

Theorem~\ref{thm:quadratic} is deduced here from
Theorem~\ref{thm:precise}, which is proved in \S\ref{sec:proof};
nothing in this section or the next depends on how it is proved.

Let $k$ be a number field of degree $\dk$, let $F/k$ be a quadratic
extension, let $\auxp$ be a prime, let $\rou$ be a primitive
$\auxp$-th root of unity, and put $\Kf\defeq F(\rou)$. The field
enters Theorem~\ref{thm:precise} through the term $\dk\log\lamK$,
which \eqref{eq:minkowski} bounds by $\tfrac12\dK\log d_\Kf$. The
threshold in Theorem~\ref{thm:quadratic} is independent of $k$
because $\Kf$ has a $\Q$-basis of house about $d_F^{1/(2\dk)}$, and
such a basis exists once $F$ is generated by the square root of an
integer of $k$ all of whose conjugates have size about $d_F^{1/\dk}$.
Over $\Q$ the fundamental discriminant is such an integer, with
$C_k=1$ below.

\begin{lemma}\label{lem:generator}
There is $\tau\in\cO_k$ with $F=k(\sqrt\tau)$ and
$|\iota(\tau)|\le C_kd_F^{1/\dk}$ for every embedding
$\iota\colon k\to\C$. Consequently $\lamK\le C_kd_F^{1/(2\dk)}$, that
is, $\dk\log\lamK\le\tfrac12\log d_F+C_k$.
\end{lemma}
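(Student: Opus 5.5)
The plan has two steps: produce a small generator $\tau$ of $F$ over $k$, and then build a small $\Q$-basis of $\cO_\Kf$ from $\sqrt\tau$, powers of $\rou$, and a small basis of $\cO_k$.

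\textbf{Step 1: a small generator.} Write $F=k(\sqrt{\tau_0})$ with $\tau_0\in\cO_k$. The relative discriminant $\fd=\fd_{F/k}$ satisfies
\[
d_F=d_k^2\,\Nm\fd .
\]
Also, $\tau_0\cO_k=\fd'\fb^2$, where $\fd'$ is squarefree and $\fd'$ divides $\fd$ away from the primes above $2$. The idea is to adjust $\tau_0$ by a square so that $\tau_0\cO_k$ becomes $\fd'\fc^2$ with $\fc$ from a fixed finite set of ideal class representatives of norm $\le C_k$ (Minkowski). Concretely:

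1. Fix representatives $\fc_1,\dots,\fc_h$ of the class group.
2. Choose $j$ with $\fb\fc_j^{-1}$ principal, say equal to $(\gamma)$.
3. Replace $\tau_0$ by $\tau_1=\tau_0\gamma^{-2}$. Then $\tau_1\cO_k=\fd'\fc_j^2$ is integral and has norm at most $C_k\Nm\fd\le C_k d_F$.

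A principal ideal of bounded norm need not have a generator with balanced conjugates. So the next step multiplies $\tau_1$ by the square of a unit $\eta^2$, chosen to make all $|\iota(\tau_1\eta^2)|$ comparable up to $C_k$. This is possible because the unit lattice, and hence the lattice of squares of units, has covolume $C_k$ in the trace-zero hyperplane of $\log$-space. Each $|\iota(\tau)|$ is then within a factor $C_k$ of $(\Nm\tau)^{1/\dk}\le C_k d_F^{1/\dk}$.

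\textbf{Step 2: a small basis of $\Kf$.} Let $\beta_1,\dots,\beta_\dk$ be a fixed $\Z$-basis of $\cO_k$, with house $\le C_k$. The elements
\[
\beta_i\,\rou^a\,(\sqrt\tau)^e,\qquad 0\le a<[\Kf:F],\ e\in\{0,1\},
\]
lie in $\cO_\Kf$ and span $\Kf$ over $\Q$, since $\Kf=k(\sqrt\tau,\rou)$. We extract $\dK$ of them that are linearly independent. Since $|\rou|=1$, each has house at most $C_k\,\house{\sqrt\tau}\le C_k d_F^{1/(2\dk)}$. Taking logarithms gives $\dk\log\lamK\le\tfrac12\log d_F+C_k$. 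The constant $C_k$ does not depend on $\auxp$, since the bound uses no property of $\rou$ beyond $|\rou|=1$.

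\textbf{Main obstacle.} The main obstacle is the 2-adic bookkeeping: controlling $\Nm\fd'$ by $\Nm\fd$ up to $C_k$. Above $2$, $\fd$ may be smaller than $\fd'$, or $\fd'$ may miss factors. The first thing to try is a bound on the part of $\fd'$ above $2$ by $\Nm(2\cO_k)^{C}\le C_k$, absorbing the discrepancy into $C_k$. For the odd part, $\fd'_{\mathrm{odd}}=\fd_{\mathrm{odd}}$ exactly. The unit-balancing in Step 1 is standard via Dirichlet's theorem and a fundamental domain for the lattice of squares of units.
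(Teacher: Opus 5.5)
Your proposal is correct and follows the paper's proof essentially step for step. You write $(\tau_0)$ as a squarefree part times a square, replace $\fb$ by a Minkowski representative of its class, balance the conjugates by the square of a unit via Dirichlet, and then take the basis $\omega\sqrt\tau^{\,a}\rou^{j}$ of house at most $C_kd_F^{1/(2\dk)}$.

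The $2$-adic obstacle you flag is not there. If $v_\fp(\tau_0)$ is odd, then $2v_{\mathfrak{P}}(\sqrt{\tau_0})=e(\mathfrak{P}/\fp)\,v_\fp(\tau_0)$ forces $e(\mathfrak{P}/\fp)=2$ at every $\fp$, including those above $2$. Hence $\fd'\mid\fd_{F/k}$ exactly, which is what the paper uses, and in particular $\fd$ is never smaller than $\fd'$ above $2$. In any case, a squarefree ideal supported above $2$ has norm at most $2^{\dk}$.
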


\begin{proof}
Write $F=k(\sqrt{\tau_0})$ with $\tau_0\in\cO_k$, and factor
$(\tau_0)=\fa\fb^2$ with $\fa$ squarefree. A prime of $k$ dividing
$\fa$ has odd valuation at $\tau_0$, so it ramifies in $F$. Hence
$\fa\mid\fd_{F/k}$ by~\cite[Chapter~III, Corollary~(2.12)]{Ne}, and
$\Nm\fa\le\Nm\fd_{F/k}=d_F/d_k^{2}$
by~\cite[Chapter~III, Corollary~(2.10)]{Ne}.

By Minkowski's theorem~\cite[Chapter~I, Theorem~(6.3)]{Ne}, the ideal
class of $\fb$ contains an integral ideal $\fc$ of norm at most $C_k$.
Write $\fb=\gamma\fc$ with $\gamma\in k^\times$ and put
$\tau_1\defeq\tau_0\gamma^{-2}$. Then $(\tau_1)=\fa\fc^2$, so
$\tau_1\in\cO_k$ and $\Nm\tau_1\le C_kd_F$, and $F=k(\sqrt{\tau_1})$.

Let $\iota$ run over the $\dk$ embeddings of $k$. By Dirichlet's unit
theorem~\cite[Chapter~I, Theorems~(7.3) and~(7.4)]{Ne}, the image of
$\cO_k^\times$ under $u\mapsto(\log|\iota(u)|)_\iota$ is a lattice of
full rank in the subspace of $\R^{\dk}$ of vectors with coordinate
sum zero and equal coordinates at conjugate embeddings. The vector
$\big(\log|\iota(\tau_1)|-\tfrac1{\dk}\log\Nm\tau_1\big)_\iota$ lies
in that subspace, so there is a unit $u$ with
\[
\Big|2\log|\iota(u)|+\log|\iota(\tau_1)|-\frac1{\dk}\log\Nm\tau_1\Big|
\le C_k
\]
for every $\iota$. Put $\tau\defeq\tau_1u^2$. Then $\tau\in\cO_k$,
$F=k(\sqrt\tau)$, and
$|\iota(\tau)|\le C_k(\Nm\tau_1)^{1/\dk}\le C_kd_F^{1/\dk}$ for every
$\iota$.

For the second claim, fix a $\Z$-basis of $\cO_k$ and let $\omega$ run
over it. Since $\Kf=k(\sqrt\tau,\rou)$, the elements
$\omega\sqrt\tau^{\,a}\rou^{j}$ with $a\in\{0,1\}$ and
$0\le j\le\auxp-2$ lie in $\cO_\Kf$ and span $\Kf$ over $\Q$, so some
$\dK$ of them form a $\Q$-basis of $\Kf$. Every conjugate of $\rou$
has absolute value one, every conjugate of $\sqrt\tau$ has absolute
value $|\iota(\tau)|^{1/2}\le C_kd_F^{1/(2\dk)}$ for some $\iota$, and
the $\omega$ have house at most $C_k$. Hence
$\lamK\le C_kd_F^{1/(2\dk)}$.
\end{proof}

\begin{proof}[Proof of Theorem~\ref{thm:quadratic}]
Let $\eps\in(0,1]$, put $\expC_0\defeq\lceil 4/\eps\rceil$, and fix a
prime $\auxp$ with $\auxp-1\ge 8\dk/\eps$. Let $F/k$ be quadratic,
and put $\Kf\defeq F(\rou)$, $\galKk\defeq\Gal(\Kf/k)$, and
$\galKk_F\defeq\Gal(\Kf/F)$, a subgroup of index $2$. Then
$\dK\le 2\dk(\auxp-1)$, and
$|\galKk|=[\Kf:k]\ge[k(\rou):k]\ge(\auxp-1)/\dk$, since
$k(\rou)\supset\Q(\rou)$.

Put $\Sset\defeq\{1\}\cup(\galKk\setminus\galKk_F)$. Then
\[
\nS=\frac{|\galKk|}{2}+1\mand
\frac{\nS}{|\galKk|}=\frac12+\frac1{|\galKk|}\le\frac12+\frac{\eps}{8}.
\]
For a prime $\fp$ of $k$ unramified in $\Kf$ and $\fP\mid\fp$, the
restriction of $\Frob_{\,\fP}$ to $F$ is the Frobenius of $\fP\cap F$
over $\fp$, which is nontrivial exactly when $\fp$ is inert in $F$.
Thus $\cls_\fp\subset\galKk\setminus\galKk_F$ when $\chi(\fp)=-1$,
and $\wt_\fp=1$ for every inert prime unramified in $\Kf$. An inert
prime ramified in $\Kf$ is ramified in $\Kf/F$ and so divides
$\auxp$; there are at most $\dk$ such primes, and each contributes at
most $\log\Nm\fp/\Nm\fp\le 1$.

Let $\expC\ge\expC_0$, $\XX\defeq d_F^{\,\expC}$, and
$\wgt\defeq\lceil 16\dk(\auxp-1)/\eps\rceil$. We have
$d_\Kf=d_F^{[\Kf:F]}\Nm\fd_{\Kf/F}$, and $\fd_{\Kf/F}$ divides the
discriminant of the minimal polynomial of $\rou$ over $F$, which
divides $\disc(\Phi_\auxp)=\pm\auxp^{\auxp-2}$; hence
\[
\log d_\Kf\le(\auxp-1)\log d_F+2\dk(\auxp-2)\log\auxp .
\]
Since $\dK$ takes finitely many values, the conditions
\[
\XX\ge\XX_0(\dK,\wgt)\mand\XX\ge(\log d_\Kf)^{\dK}
\]
of Theorem~\ref{thm:precise} hold once $d_F$ is large in terms of
$k$, $\auxp$, $\wgt$, and $\expC$; assume that this is the case. By
Lemma~\ref{lem:generator} and $\log d_F=\frac{1}{\expC}\log\XX$, we
have $\dk\log\lamK\le\frac1{2\expC}\log\XX+C_k$, and
Theorem~\ref{thm:precise} gives
\begin{align*}
\ssum{\Nm\fp\le \XX\\ \chi(\fp)=-1}\frac{\log\Nm\fp}{\Nm\fp}
&\le\sum_{\Nm\fp\le\XX}\wt_\fp\frac{\log\Nm\fp}{\Nm\fp}+\dk\\
&\le\Big(1+\frac{\nS-1}{\wgt}\Big)
\Big(\frac12+\frac{\eps}{8}+\frac1{2\expC}\Big)\log\XX
+O_{k,\auxp,\wgt,\expC}(1).
\end{align*}
Here $(\nS-1)/\wgt\le\dk(\auxp-1)/\wgt\le\eps/16$ by the choice of
$\wgt$, and $1/(2\expC)\le\eps/8$ by the choice of $\expC_0$, so the
coefficient of $\log\XX$ is at most
\[
\Big(1+\frac{\eps}{16}\Big)\Big(\frac12+\frac{\eps}{4}\Big)
<\frac12+\eps .
\]
Since $\auxp$ and $\wgt$ depend only on $k$ and $\eps$, the error
term is $O_{k,\eps,\expC}(1)$, and $\expC_0$ depends on $\eps$ alone.
The remaining fields $F$ have bounded discriminant, so there are
finitely many of them by Hermite's theorem, and for each of them the
bound holds with the constant $\tfrac12$ by \eqref{eq:chebotarev}.
\end{proof}

More precisely, the proof gives, for every prime $\auxp$, every
integer $\wgt\ge 2$, every $\expC\ge 2$, and $d_F$ large in terms of
$k,\auxp,\wgt,\expC$,
\be
\ssum{\Nm\fp\le \XX\\ \chi(\fp)=-1}\frac{\log\Nm\fp}{\Nm\fp}
\le\Big(1+\frac{|\galKk|}{2\wgt}\Big)
\Big(\frac12+\frac1{|\galKk|}+\frac{1}{2\expC}\Big)\log \XX
+O_{k,\auxp,\wgt,\expC}(1),
\ee
with $|\galKk|=2(\auxp-1)$ whenever $\auxp\nmid d_k$ and
$F\ne k(\sqrt{\auxp^*})$, $\auxp^*\defeq(-1)^{(\auxp-1)/2}\auxp$; the
case $\auxp=3$ over $\Q$ is the argument of~\cite{Ke} with $\sqrt{-3}$
in place of $\sqrt 2$, and gives $\tfrac34+\eps$.

\section{Landau--Siegel zeros and the Brauer--Siegel theorem}
\label{sec:zeros}

\begin{lemma}\label{lem:bias}
Let $\beta\in(0,1)$ be a real zero of $L(s,\chi)$ and $\XX\ge 3$. Then
\[
\ssum{\Nm\fp\le \XX\\ \chi(\fp)=1}\frac{\log\Nm\fp}{\Nm\fp}
\ll\log d_F+(1-\beta)(\log \XX)^2,
\]
with an absolute implied constant.
\end{lemma}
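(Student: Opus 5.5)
The plan is the classical positivity argument with the Dedekind zeta function of $F$, that is, with $\zeta_k(s)L(s,\chi)=\zeta_F(s)$. Both $-\zeta_F'/\zeta_F$ and the Dirichlet series $\sum_{\fa}(1*\chi)(\fa)\Lambda$-type coefficients are nonnegative. Concretely, the coefficient of $\Nm\fp^{-s}$ in $-\frac{\zeta_k'}{\zeta_k}(s)-\frac{L'}{L}(s,\chi)$ is $(1+\chi(\fp))\log\Nm\fp$, which equals $2\log\Nm\fp$ at split primes and is $\ge0$ everywhere, including at prime powers. So split primes are controlled by the nonnegative sum
\[
\sum_{\fa}\Lambda_F(\fa)\,w(\Nm\fa),\qquad \Lambda_F(\fa)\defeq\Lambda_k(\fa)(1+\chi(\fa)).
\]
Here $w$ is a smooth or truncated weight that is $\ge c/\Nm\fp$ for $\Nm\fp\le\XX$. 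Rather than a sharp cutoff I will use the weight $w(t)=t^{-1}(1-\log t/\log\XX)$ up to $\XX$, or evaluate at $s=1+1/\log\XX$. For the latter, note that $\Nm\fp^{-s}\ge \er^{-1}\Nm\fp^{-1}$ for $\Nm\fp\le\XX$, so
\[
\ssum{\Nm\fp\le\XX\\ \chi(\fp)=1}\frac{\log\Nm\fp}{\Nm\fp}\le \frac{\er}{2}\Big(-\frac{\zeta_F'}{\zeta_F}(\sigma)\Big),\qquad \sigma=1+\frac1{\log\XX}.
\]

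The main step is an upper bound for $-\frac{\zeta_F'}{\zeta_F}(\sigma)$ that exploits the zero at $\beta$, not merely a bound of size $\log\XX$. I will use the Hadamard product / explicit formula for $\zeta_F$:
\[
-\frac{\zeta_F'}{\zeta_F}(\sigma)=\frac1{\sigma-1}-\sum_\rho\frac1{\sigma-\rho}+\frac12\log d_F+O(\dK)
\]
(Stark's form, with the gamma factor contribution $O([F:\Q])$; here $[F:\Q]=2\dk$ is at most a constant times $\log d_F$ by Minkowski, so it is absorbed). Every zero has $\mathrm{Re}\,\frac{1}{\sigma-\rho}\ge0$, so dropping all zeros except $\beta$ gives
\[
-\frac{\zeta_F'}{\zeta_F}(\sigma)\le\frac1{\sigma-1}-\frac1{\sigma-\beta}+\tfrac12\log d_F+O(\log d_F)=\frac{1-\beta}{(\sigma-1)(\sigma-\beta)}+O(\log d_F).
\]
With $\sigma-1=1/\log\XX$ and $\sigma-\beta\ge\sigma-1$, the first term is at most $(1-\beta)(\log\XX)^2$, which is the claimed bound. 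The constants are absolute because Stark's explicit formula has absolute constants once $\log d_F\gg[F:\Q]$. That inequality holds for $F\neq\Q$, and $F$ is at least quadratic.

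The one point needing care, and the main obstacle, is uniformity. The gamma-factor term $\frac{\Gamma'}{\Gamma}$ contributes $O([F:\Q])$, and it must be absorbed into $\log d_F$ with an absolute constant; Minkowski's bound $\log d_F\ge c[F:\Q]$ for $[F:\Q]\ge2$ does this. The restriction to primes, rather than all prime ideal powers, costs nothing since every term is nonnegative. The case $\XX<3$ is excluded by hypothesis.
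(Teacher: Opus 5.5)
Your proposal is correct and follows the paper's proof essentially line for line. Both evaluate $-\zeta_F'/\zeta_F$ at $1+1/\log X$ and use the nonnegativity of its coefficients together with $\Nm\fp^{-1/\log X}\ge e^{-1}$. Both then apply the Hadamard product inequality, discarding every zero except $\beta$, and use Minkowski's bound $[F:\mathbf{Q}]\ll\log d_F$ to absorb the gamma-factor term $O([F:\mathbf{Q}])$ with an absolute constant.
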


\begin{proof}
Put $s_0\defeq 1+1/\log\XX$, so that $1<s_0\le 2$. The Dirichlet
series
\[
-\frac{\zeta_F'}{\zeta_F}(s_0)=\sum_{\fa}\Lambda_F(\fa)\Nm\fa^{-s_0},
\]
the sum over the nonzero ideals of $F$, has nonnegative
coefficients. A prime $\fp$ of $k$ with $\chi(\fp)=1$ and
$\Nm\fp\le\XX$ has two primes of $F$ above it, each of norm
$\Nm\fp$, and $\Nm\fp^{-1/\log\XX}\ge\er^{-1}$; hence
\[
\ssum{\Nm\fp\le\XX\\ \chi(\fp)=1}\frac{\log\Nm\fp}{\Nm\fp}
\le\frac{\er}{2}\Big(-\frac{\zeta_F'}{\zeta_F}(s_0)\Big).
\]
On the other hand, $\beta$ is a zero of
$\zeta_F(s)=\zeta_k(s)L(s,\chi)$,
and the Hadamard product of the completed zeta function of $F$
gives, for real $s_0$ with $1<s_0\le 2$,
\[
-\frac{\zeta_F'}{\zeta_F}(s_0)
\le\frac1{s_0-1}+\frac12\log d_F+O([F:\Q])-\frac1{s_0-\beta},
\]
with an absolute implied constant, every other nontrivial zero
contributing a negative term; see~\cite[Lemma~3, (9) and (10)]{St}
or~\cite[proof of Lemma~8.1]{LO}, and~\cite[Chapter~14]{Da} for
$k=\Q$. Here
$[F:\Q]\ll\log d_F$ by Minkowski's bound, and
\[
\frac1{s_0-1}-\frac1{s_0-\beta}
=\frac{1-\beta}{(s_0-1)(s_0-\beta)}
\le\frac{1-\beta}{(s_0-1)^2}=(1-\beta)(\log\XX)^2,
\]
since $s_0-\beta\ge s_0-1$. The lemma follows.
\end{proof}

We also need Mertens' theorem for $k$, which asserts
\be\label{eq:mertens}
\sum_{\Nm\fp\le x}\frac{\log\Nm\fp}{\Nm\fp}=\log x+O_k(1)
\qquad(x\ge 2);
\ee
see Rosen~\cite{Ro}, and Garcia and Lee~\cite[Theorem~1]{GL} for an
explicit constant in terms of $\dk$, $d_k$, and the residue
$\kappa_k$ of $\zeta_k(s)$ at $s=1$, for $\dk\ge 2$. We also use the
trivial bound for the ramified primes,
\be\label{eq:ramified}
\sum_{\chi(\fp)=0}\frac{\log\Nm\fp}{\Nm\fp}
\le\frac{\omega(\fd_{F/k})}{\er}\le\log d_F,
\ee
where $\omega(\fd_{F/k})\le\log_2\Nm\fd_{F/k}$ is the number of primes
dividing $\fd_{F/k}$ and $(\log t)/t\le 1/\er$ for $t>0$.

\begin{proof}[Proof of Corollary~\ref{cor:siegel}]
Let $c_1$ be the implied constant in Lemma~\ref{lem:bias}, an
absolute constant, and apply Theorem~\ref{thm:quadratic} with
$\eps=\tfrac18$ and $\expC\defeq\max(\expC_0(\tfrac18),8c_1,8)$, so
that $\XX=d_F^{\,\expC}$ and
$\log d_F=\frac{1}{\expC}\log\XX\le\tfrac18\log\XX$.
By \eqref{eq:mertens}, Theorem~\ref{thm:quadratic}, and
\eqref{eq:ramified}, the split primes satisfy
\[
\ssum{\Nm\fp\le\XX\\ \chi(\fp)=1}\frac{\log\Nm\fp}{\Nm\fp}
\ge\log\XX-\Big(\frac12+\frac18\Big)\log\XX-\frac18\log\XX-O_k(1)
=\frac14\log\XX-O_k(1).
\]
Let $\beta\in(0,1)$ be a real zero of $L(s,\chi)$. By
Lemma~\ref{lem:bias} and $c_1\log d_F\le\tfrac18\log\XX$, the same
sum is at most $c_1(1-\beta)(\log\XX)^2+\tfrac18\log\XX$. Hence
\[
c_1(1-\beta)(\log\XX)^2\ge\tfrac18\log\XX-O_k(1)\ge\tfrac1{16}\log\XX
\]
once $d_F$ is large in terms of $k$, and then
$(1-\beta)\log d_F\ge 1/(16c_1\expC)$. The remaining fields $F$
have bounded discriminant, so there are finitely many of them by
Hermite's theorem. For each of them, $L(s,\chi)$ has finitely many
real zeros in $(0,1)$, and the largest of them, $\beta_F$, satisfies
$\beta_F<1$ because $L(1,\chi)\ne 0$. The corollary follows on
decreasing $c(k)$ to $\min_F(1-\beta_F)\log d_F$.

Every constant is effectively computable. The constants of
Theorem~\ref{thm:quadratic} and Lemma~\ref{lem:bias} and Mertens'
constant for $k$ are explicit, Hermite's bound is effective, and for
each remaining $F$ with $\beta_F\ge\tfrac12$, a positive lower bound
for $1-\beta_F$ follows from the mean value theorem,
$L(1,\chi)=L'(s_1,\chi)(1-\beta_F)$ for some $s_1\in(\beta_F,1)$,
since $L(1,\chi)=\kappa_F/\kappa_k>0$ is computable from the class
number formula, and $L'(\sigma,\chi)$ is bounded on $[\tfrac12,1]$ by
an explicit function of $d_F$ and $\dk$, by Rademacher's explicit
convexity bound~\cite[Theorem~5]{Ra}.
\end{proof}

\subsection{The Brauer--Siegel theorem over $k$}

For a number field $\Kf$, let $h_\Kf$, $R_\Kf$, and $w_\Kf$ be
its class number, regulator, and number of roots of unity, let $r_1$
and $r_2$ be its numbers of real and complex places, and let
\[
\kappa_\Kf\defeq\lim_{s\to 1}(s-1)\zeta_\Kf(s)
=\frac{2^{r_1}(2\pi)^{r_2}h_\Kf R_\Kf}{w_\Kf\,d_\Kf^{1/2}},
\]
the residue of $\zeta_\Kf(s)$ at $s=1$~\cite[Chapter~VII,
Corollary~(5.11)]{Ne}. We use two results of Stark. The first is
his generalization of a theorem of Heilbronn, which reduces a real
zero near one to a subextension of degree at most two.

\begin{lemma}[{Stark~\cite[Lemma~3 and Theorem~3]{St}}]
\label{lem:stark}
Let $\Kf/k$ be a Galois extension with $\Kf\ne\Q$, and let $\beta$ be
a real zero of $\zeta_\Kf(s)$ with $1-\beta\le 1/(4\log d_\Kf)$. Then
$\beta$ is a simple zero, and there is a field $F$ with
$k\subseteq F\subseteq\Kf$ and $[F:k]\le 2$ such that
$\zeta_F(\beta)=0$.
\end{lemma}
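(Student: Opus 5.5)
The plan is to follow Stark's argument, which generalizes Heilbronn's, in two stages. First we show that a real zero this close to one is simple. Then we read off, from the Artin factorization of $\zeta_\Kf(s)$ over $k$, a real linear character carrying the zero; its fixed field is $F$.

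\textbf{Step 1 (simplicity).} Use the explicit formula from the Hadamard product of the completed zeta function of $\Kf$, as in the proof of Lemma~\ref{lem:bias}. For real $1<s\le 2$,
\[
0\le-\frac{\zeta_\Kf'}{\zeta_\Kf}(s)\le\frac1{s-1}+\frac12\log d_\Kf+O([\Kf:\Q])-\sum_{\rho}\mathrm{Re}\,\frac1{s-\rho}.
\]
Every term of the last sum is nonnegative, so we may keep only the real zero $\beta$, counted twice if it is not simple. This gives $2/(s-\beta)\le 1/(s-1)+\tfrac12\log d_\Kf+O([\Kf:\Q])$. Take $s=1+c/\log d_\Kf$ with a suitable $c$ and use $1-\beta\le 1/(4\log d_\Kf)$. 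The left side is then about $2\log d_\Kf/(c+\tfrac14)$, which exceeds the right side, a contradiction. Stark's constants in \cite[Lemma~3]{St} make this exact, in particular the term $O([\Kf:\Q])$, which must be absorbed using $\Kf\ne\Q$ and $[\Kf:\Q]\ll\log d_\Kf$. I expect this numerical bookkeeping to be the main obstacle. The structure of the step is clear, but the constant $\tfrac14$ leaves no slack, and I would import Stark's sharp form of the inequality rather than rederive it.

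\textbf{Step 2 (the class function of orders).} Let $\galKk=\Gal(\Kf/k)$, and for each irreducible character $\chi$ of $\galKk$ put $n_\chi\defeq\mathrm{ord}_{s=\beta}L(s,\chi,\Kf/k)$. By Brauer's theorem each $L(s,\chi)$ is meromorphic, so $n_\chi\in\Z$, possibly negative. Form the virtual character $n\defeq\sum_\chi n_\chi\chi$. From $\zeta_\Kf=\prod_\chi L(s,\chi)^{\chi(1)}$ we get $n(1)=\mathrm{ord}_\beta\zeta_\Kf=1$ by Step~1.

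Now fix a cyclic subgroup $H\le\galKk$. For each linear character $\psi$ of $H$, the Hecke $L$-function $L(s,\psi)$ over $\Kf^H$ equals $L(s,\mathrm{Ind}\,\psi)$. It is entire if $\psi\ne 1$ and has only the pole at $s=1$ otherwise. By Frobenius reciprocity its order at $\beta$ is $\langle n|_H,\psi\rangle_H\ge 0$. Hence $n|_H$ is a genuine character of $H$, and it has degree $n(1)=1$. So $n|_H$ is linear and $|n(g)|=1$ for every $g\in\galKk$, since every $g$ lies in a cyclic subgroup. It follows that $\langle n,n\rangle=1$. A virtual character of norm one is $\pm$ an irreducible character, and $n(1)=1$ forces $n$ to be an irreducible linear character.

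\textbf{Step 3 (reality and the field $F$).} Since $\beta$ is real and $\overline{L(\bar s,\chi)}=L(s,\bar\chi)$, we have $n_{\bar\chi}=n_\chi$, so $n=\bar n$. Thus $n$ is a real linear character, either trivial or of order two. Let $F$ be the fixed field of $\ker n$, so that $k\subseteq F\subseteq\Kf$ and $[F:k]\le 2$. Then $\zeta_F(s)=\zeta_k(s)L(s,n)$ when $n\neq 1$, and $\zeta_F=\zeta_k$ when $n=1$. In both cases $\zeta_k$ is holomorphic at $\beta<1$ and $\mathrm{ord}_\beta L(s,n)=1$, so $\zeta_F(\beta)=0$.
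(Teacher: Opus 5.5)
Your proposal is correct and takes essentially the same approach as the paper, which proves nothing here and imports the statement by citation: your Step~1 is the content of Stark's Lemma~3, and your Steps~2--3 are the Heilbronn-character argument behind his Theorem~3. The worry in Step~1 is unfounded, since for $1<s\le 2$ the term $1/s$ and the archimedean $\Gamma$-terms are together negative, so there is no $O([\Kf:\Q])$ to absorb; the inequality $2/(s-\beta)<1/(s-1)+\tfrac12\log d_\Kf$ at $s=1+1/(2\log d_\Kf)$, which is at most $2$ because $d_\Kf\ge 3$, gives $\tfrac83\log d_\Kf<\tfrac52\log d_\Kf$, a contradiction, so the constant $\tfrac14$ does leave some slack.
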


Stark's Lemma~3 says that $\zeta_\Kf(s)$ has at most one zero in
the region $\sigma\ge 1-1/(4\log d_\Kf)$,
$|t|\le 1/(4\log d_\Kf)$, and that it is real and simple; his
Theorem~3 says that a simple real zero of $\zeta_\Kf(s)$, for
$\Kf/k$ normal, is a zero of $\zeta_F(s)$ for $F=k$ or $F$ quadratic
over $k$.

The second result is his lower bound for the residue in terms of the
real zeros. Let $\beta_\Kf$ be the largest real zero of
$\zeta_\Kf(s)$ in $[1-1/(4\log d_\Kf),1)$ if there is one, and
$\beta_\Kf\defeq 1-1/(4\log d_\Kf)$ otherwise.

\begin{lemma}[{Stark~\cite[Lemma~4]{St}}]\label{lem:residue}
There is an absolute effective constant $c_2>0$ such that
$\kappa_\Kf\ge c_2(1-\beta_\Kf)$ for every number field $\Kf\ne\Q$.
\end{lemma}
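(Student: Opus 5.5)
The plan is Stark's argument: compare $\kappa_\Kf$ with the value of $g(s)\defeq(s-1)\zeta_\Kf(s)$ at a real point $s_0$ slightly to the right of $1$, and control $\log g$ on $[1,s_0]$ through the Hadamard product. Put $n\defeq[\Kf:\Q]$, $L\defeq\log d_\Kf$, $\delta\defeq 1/(4L)$ and $s_0\defeq 1+\delta$. Since $\Kf\ne\Q$, Minkowski's bound gives $n\ll L$ and $L\gg 1$. The Dirichlet series of $\zeta_\Kf$ has nonnegative coefficients and leading term $1$, so $\zeta_\Kf(s_0)\ge 1$ and hence $g(s_0)\ge\delta$. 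It therefore suffices to show
\[
\log g(s_0)-\log g(1)=\int_1^{s_0}\frac{g'}{g}(\sigma)\,d\sigma\le\log\frac{s_0-\beta_\Kf}{1-\beta_\Kf}+O(1).
\]
Since $s_0-\beta_\Kf\le 2\delta$, this yields $\kappa_\Kf=g(1)\gg\delta(1-\beta_\Kf)/(s_0-\beta_\Kf)\gg 1-\beta_\Kf$. The integrand is continuous on $[1,s_0]$, because $\beta_\Kf<1$ and $g(1)=\kappa_\Kf>0$.

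\textbf{Step 1: a pointwise bound for $g'/g$.} Write the logarithmic derivative of the completed zeta function $\xi_\Kf(s)=s(s-1)d_\Kf^{s/2}\Gamma_\R(s)^{r_1}\Gamma_\C(s)^{r_2}\zeta_\Kf(s)$, with the constant $B$ eliminated as usual by pairing $\rho$ with $\bar\rho$ and $1-\bar\rho$. For real $\sigma\in[1,2]$ this gives
\[
\frac{g'}{g}(\sigma)=\sum_\rho\mathrm{Re}\,\frac1{\sigma-\rho}-\frac1\sigma-\frac12 L-\frac{\Gamma_\Kf'}{\Gamma_\Kf}(\sigma),
\]
where the gamma factor term is $O(n)$. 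It follows that $g'/g(\sigma)\le\sum_\rho\mathrm{Re}\,1/(\sigma-\rho)+O(n)$. At the point $s_1\defeq 1+2\delta$ we have $-\zeta_\Kf'/\zeta_\Kf(s_1)\ge 0$, and the same identity gives
\[
\sum_\rho\mathrm{Re}\,\frac1{s_1-\rho}\le\frac1{s_1-1}+\frac12L+O(n)\ll L .
\]
Every term here is positive, so this bound controls each subset of the zeros.

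\textbf{Step 2: isolating the exceptional zero.} By Stark's Lemma~3, quoted after Lemma~\ref{lem:stark}, the box $\mathrm{Re}\,s\ge 1-\delta$, $|\mathrm{Im}\,s|\le\delta$ contains at most one zero, and such a zero is real and simple; if it exists it is $\beta_\Kf$. The zero $\beta_\Kf$ contributes $1/(\sigma-\beta_\Kf)$ to the sum in Step~1, whose integral over $[1,s_0]$ is exactly $\log\big((s_0-\beta_\Kf)/(1-\beta_\Kf)\big)$. Every other zero $\rho=\beta'+i\gamma$ satisfies $\beta'<1-\delta$ or $|\gamma|>\delta$, and $\beta'<1$. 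For $\sigma\in[1,s_0]$ the distances $|\sigma-\rho|$ and $|s_1-\rho|$ are therefore comparable. I would check by a short case split on whether $|\gamma|\le\delta$ or $|\gamma|>\delta$ that
\[
\mathrm{Re}\,\frac1{\sigma-\rho}\le C\,\mathrm{Re}\,\frac1{s_1-\rho}
\]
with an absolute $C$. In the first case $\sigma-\beta'\ge\delta$, and in the second $|\gamma|$ dominates $2\delta$; in both cases the real parts differ by at most a bounded factor, and so do the moduli.

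\textbf{Step 3: integrating.} By Step~2 and the bound at $s_1$, the zeros other than $\beta_\Kf$ contribute at most $C\sum_\rho\mathrm{Re}\,1/(s_1-\rho)\ll L$ to $g'/g(\sigma)$, and the gamma factor adds $O(n)=O(L)$. Integrated over an interval of length $\delta=1/(4L)$, these give $O(1)$, which is the bound displayed at the start, with an absolute and effective constant. When $\zeta_\Kf$ has no zero in the box, there is no exceptional term, and we get $\kappa_\Kf\gg\delta=1-\beta_\Kf$ directly from the definition $\beta_\Kf=1-1/(4L)$, so that case is also covered. The main obstacle is Step~2: the comparison of $\mathrm{Re}\,1/(\sigma-\rho)$ with $\mathrm{Re}\,1/(s_1-\rho)$ uniformly for every zero outside the box. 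If the constant there degenerates for zeros just outside the box, I would move $s_1$ to $1+c\delta$ for a suitable absolute $c$, shrink $s_0$ accordingly, and rely on the positivity of every term in the bound at $s_1$.
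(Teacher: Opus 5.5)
Your proof is correct and complete. The paper gives no argument of its own here, only the citation of Stark. Your route is the standard proof of that lemma: the Hadamard identity for $g'/g$ on $[1,1+\delta]$, the bound $\zeta_K(1+\delta)\ge 1$, positivity of every term of the zero sum at $s_1=1+2\delta$, and Stark's Lemma~3 to isolate a single simple exceptional term.

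The comparison you flagged as the main obstacle does not degenerate, so $C=9$ works and $s_1$ need not be moved. Put $a=\sigma-\beta'$ and $b=s_1-\beta'\in[a+\delta,a+2\delta]$. When $|\gamma|\le\delta$ we have $a>\delta$, and the ratio $a(b^2+\gamma^2)/\bigl(b(a^2+\gamma^2)\bigr)$ is at most $b/a<3$. When $|\gamma|>\delta$ we have $(a+2\delta)^2\le 2a^2+8\gamma^2$, so the ratio is at most $\bigl((a+2\delta)^2+\gamma^2\bigr)/(a^2+\gamma^2)\le 9$.
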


\medskip

Now we come to the proof of Theorem~\ref{thm:bs}, which traces a
real zero $\beta$ of $\zeta_\Kf(s)$ down the tower until it meets
one of the two inputs.
\[
\begin{tikzcd}[row sep=large, column sep=tiny]
& \Kf
  \arrow[d, "\text{Lemma~\ref{lem:stark}}"',
    "{\zeta_\Kf(\beta)=0,\ 1-\beta\le 1/(4\log d_\Kf)}"]
&\\
& F \arrow[dl, "{\zeta_k(\beta)=0}"']
    \arrow[dr, "{L(\beta,\chi_{F/k})=0}"]
  & \\
{(1-\beta)\log d_k\ge\eta_k}
& & {(1-\beta)\log d_F\ge c(k)}
\end{tikzcd}
\]
Here $k\subseteq F\subseteq\Kf$ with $[F:k]\le 2$, and
$\zeta_F(s)=\zeta_k(s)L(s,\chi_{F/k})$ when $[F:k]=2$; the left
branch is the only one when $F=k$, and the right branch is
Corollary~\ref{cor:siegel}. The constant $\eta_k$ on the left is
the contribution of the base field,
\[
\eta_k\defeq\min\big\{(1-\beta')\log d_k:\beta'\in(0,1),\
\zeta_k(\beta')=0\big\},
\]
with $\eta_k\defeq 1$ if $\zeta_k(s)$ has no real zero in $(0,1)$,
as for $k=\Q$. Since $\zeta_k(s)$ has finitely many zeros in $(0,1)$
and $\zeta_k(1)\ne 0$, we have $\eta_k>0$.

\begin{proof}[Proof of Theorem~\ref{thm:bs}]
Let $c(k)$ be the constant of Corollary~\ref{cor:siegel}, and let
$c_2$ be the constant of Lemma~\ref{lem:residue}, which we may take
to be at most one.

(i) Let $\Kf/k$ be Galois with $\Kf\ne\Q$, and let $\beta\in(0,1)$
be a real zero of $\zeta_\Kf(s)$. We prove
\be\label{eq:three}
(1-\beta)\log d_\Kf\ge\min\big(\tfrac14,\,c(k),\,\eta_k\big).
\ee
We use $d_k\le d_F\le d_\Kf$ for every
$F$ between $k$ and $\Kf$, which holds since the discriminant of an
extension is divisible by a power of the discriminant of the base.

\emph{Case 1: $1-\beta>1/(4\log d_\Kf)$.} Then
$(1-\beta)\log d_\Kf>\tfrac14$.

Otherwise $1-\beta\le 1/(4\log d_\Kf)$, and Lemma~\ref{lem:stark}
provides $F$ with $k\subseteq F\subseteq\Kf$, $[F:k]\le 2$, and
$\zeta_F(\beta)=0$. If $F=k$, then $\zeta_k(\beta)=0$; if $[F:k]=2$,
then $\zeta_F(s)=\zeta_k(s)L(s,\chi_{F/k})$, so $\beta$ is a zero of
one of the two factors.

\emph{Case 2: $\zeta_k(\beta)=0$.} Then
$(1-\beta)\log d_\Kf\ge(1-\beta)\log d_k\ge\eta_k$ by the definition
of $\eta_k$.

\emph{Case 3: $[F:k]=2$ and $L(\beta,\chi_{F/k})=0$.} Then
$(1-\beta)\log d_\Kf\ge(1-\beta)\log d_F\ge c(k)$ by
Corollary~\ref{cor:siegel}.

This proves \eqref{eq:three}, and (i) holds with
$c(k)$ replaced by $c_3(k)\defeq c_2\min(\tfrac14,c(k),\eta_k)$.

(ii) Let $\beta_\Kf$ be as in Lemma~\ref{lem:residue}. Either
$\beta_\Kf$ is a real zero of $\zeta_\Kf(s)$, and then
\eqref{eq:three} applies to it, or $\beta_\Kf=1-1/(4\log d_\Kf)$;
in both cases $1-\beta_\Kf\ge\min(\tfrac14,c(k),\eta_k)/\log d_\Kf$.
By Lemma~\ref{lem:residue} and the residue formula,
\[
h_\Kf R_\Kf=\frac{w_\Kf\,d_\Kf^{1/2}}{2^{r_1}(2\pi)^{r_2}}\,\kappa_\Kf
\ge\frac{d_\Kf^{1/2}}{(2\pi)^{\dK}}\cdot c_2(1-\beta_\Kf)
\ge\frac{c_3(k)\,d_\Kf^{1/2}}{(2\pi)^{\dK}\log d_\Kf},
\]
since $w_\Kf\ge 1$ and $2^{r_1}(2\pi)^{r_2}\le(2\pi)^{\dK}$. This is
(ii) with the constant $c_3(k)$, and taking logarithms gives the
displayed inequality of the theorem with $C_k\defeq-\log c_3(k)$.

Finally, $c_3(k)$ is effectively computable. The constants $c(k)$
and $c_2$ are, by Corollary~\ref{cor:siegel} and Stark's proof of
Lemma~\ref{lem:residue}, and so is $\eta_k$. A real zero
$\beta'<\tfrac12$ of $\zeta_k(s)$ contributes at least
$\tfrac12\log d_k$, and for $\beta'\ge\tfrac12$ the entire function
$(s-1)\zeta_k(s)$ vanishes at $\beta'$ and takes the value
$\kappa_k>0$ at $s=1$, so $1-\beta'$ is bounded below as in the
proof of Corollary~\ref{cor:siegel}.
\end{proof}

\begin{remark}\label{rem:almost}
If $k$ is almost normal over $\Q$, every Galois extension
$\Kf$ of $k$ is as well, and Theorem~\ref{thm:bs} holds with an
absolute
constant in place of $c(k)$. Indeed, Stark's Lemma~10~\cite{St}, the
tower form of Lemma~\ref{lem:stark}, places a real zero $\beta$ of
$\zeta_\Kf(s)$ with $1-\beta\le 1/(16\log d_\Kf)$ on a subfield
$F\subset\Kf$ with $[F:\Q]\le 2$, where the theorem
over~$\Q$~\cite{OAI,Gr,Ko} gives $(1-\beta)\log d_F\ge c_0$ with an
absolute constant; the rest of the proof is unchanged.
\end{remark}

\subsection{What is uniform in the field}\label{sec:uniform}

The proof of \eqref{eq:precise} is uniform in $\Kf$ and $k$ except
for the term $\dk\log\lamK$, which is at most
$\frac{\dK}{2}\log d_\Kf$ in general and $\frac12\log d_F+C_k$ in the
quadratic case, and which $\eps\log\XX$ absorbs once
$\XX\ge d_\Kf^{\,\expC_0}$. The one input to
Corollary~\ref{cor:siegel} that is not uniform is Mertens' constant
in \eqref{eq:mertens}. By
the explicit formula for $\zeta_k(s)$, that constant is governed by
the real zeros of $\zeta_k(s)$ near $s=1$, the phenomenon under study
one degree down. The same zeros enter Theorem~\ref{thm:bs} a
second time, through $\eta_k$ in the proof. Thus
Theorem~\ref{thm:quadratic} is as uniform as the house bound, while
its consequences for the zeros and for $h_\Kf R_\Kf$ are uniform
only over extensions of a fixed $k$; the obstruction is the
exceptional zero of $\zeta_k(s)$ for a field $k$ that is not almost
normal, and we do not address it.

\section{The lattice and the zero estimate}\label{sec:lattice}

\subsection{Integers of small house}

Recall that $\dK=[\Kf:\Q]$, and let $r_1$ and $r_2$ be the numbers of
real and complex places of $\Kf$, so that $\dK=r_1+2r_2$. The quantity
$\lamK$ is controlled by the discriminant. Under the Minkowski
embedding, $\cO_\Kf$ is a lattice in $\R^{r_1}\times\C^{r_2}$ of
covolume $2^{-r_2}d_\Kf^{1/2}$, and $\house{\cdot}$ is the maximum
norm on this space, whose unit ball has volume $2^{r_1}\pi^{r_2}$. Let
$\lam_1\le\dots\le\lam_\dK$ be the successive minima of $\cO_\Kf$ with
respect to $\house{\cdot}$. By Minkowski's theorem on successive
minima~\cite[Chapter~VIII, Theorem~V]{Ca}, we have
\[
\lam_1\cdots\lam_\dK\le(2/\pi)^{r_2}d_\Kf^{1/2}\le d_\Kf^{1/2}.
\]
A nonzero algebraic integer has house at least one, so $\lam_1\ge 1$,
and therefore
\be\label{eq:minkowski}
\lamK\le\lam_\dK\le d_\Kf^{1/2}.
\ee

From now on, we fix $\omega_1,\dots,\omega_\dK\in\cO_\Kf$, linearly
independent over $\Q$, with
\be\label{eq:lam}
\house{\omega_t}\le\lamK\qquad(1\le t\le \dK).
\ee
Such elements exist by the definition of $\lamK$; they need not form a
$\Z$-basis of $\cO_\Kf$.

\subsection{The points}

Fix an integer $\bxs\ge 2$ and put
\[
\cube\defeq\{0,1,\dots,\bxs-1\}^{\dK},\qquad
\hB\defeq\dK\lamK\bxs.
\]
For $\vn=(n_1,\dots,n_\dK)\in\cube$ we define
\[
\theta_\vn\defeq n_1\omega_1+\dots+n_\dK\omega_\dK\in\cO_\Kf,
\]
so that $\house{\theta_\vn}\le\hB$ by \eqref{eq:lam}. Recall that
$\nS\defeq|\Sset|$. For $\theta\in\Kf$ and a set $\Tset$ of embeddings
$\Kf\to\C$ we write
\be
\theta^{\Tset}\defeq\big(\iota(\theta)\big)_{\iota\in\Tset}
\in\C^{|\Tset|},
\ee
and for $\vu=(u_1,\dots,u_r)\in\Kf^r$ we write $\vu^{\Tset}$ for the
$|\Tset|\times r$ matrix with rows $\iota(\vu)$, $\iota\in\Tset$,
where $\iota(\vu)\defeq\big(\iota(u_1),\dots,\iota(u_r)\big)$; that
is, its columns are $u_1^{\Tset},\dots,u_r^{\Tset}$. The map
$\theta\mapsto\theta^{\Tset}$ is $\Q$-linear, and $\theta^{\Tset}$
determines $\theta$ when $\Tset\ne\emptyset$, since each $\iota$ is
injective. Our points are
\[
\theta_\vn^{\Sset}\in\cO_\Kf^{\,\nS}\subset\C^{\nS}\qquad(\vn\in\cube),
\]
every coordinate of which has house at most $\hB$, and by linearity
\[
\theta_\vn^{\Sset}=n_1\omega_1^{\Sset}+\dots+n_\dK\omega_\dK^{\Sset}.
\]
Let $\lat\subset\C^{\nS}$ be the subgroup generated by
$\omega_1^{\Sset},\dots,\omega_\dK^{\Sset}$, and let
$\pi\colon\C^{\dK}\to\C^{\nS}$ be the linear map with
$\pi(\ve_t)=\omega_t^{\Sset}$, so that $\lat=\pi(\Z^{\dK})$. The
objects introduced so far fit into the commutative diagram
\[
\begin{tikzcd}[row sep=large, column sep=large]
\cube \arrow[r, hook] \arrow[d, "\vn\mapsto\theta_\vn"']
& \Z^{\dK} \arrow[r, hook] \arrow[d, "\vv\mapsto\sum_t v_t\omega_t"']
& \Q^{\dK} \arrow[d, "\vv\mapsto\sum_t v_t\omega_t"']
\arrow[dd, bend left=70, "\pi", start anchor=east, end anchor=east] \\
\{\theta_\vn\} \arrow[r, hook]
\arrow[d, "\theta\mapsto\theta^{\Sset}"']
& \cO_\Kf \arrow[r, hook]
\arrow[d, "\theta\mapsto\theta^{\Sset}"']
& \Kf \arrow[d, "\theta\mapsto\theta^{\Sset}"'] \\
\{\theta_\vn^{\Sset}\} \arrow[r, hook] & \lat \arrow[r, hook]
& \C^{\nS}
\end{tikzcd}
\]
in which the horizontal arrows are inclusions and the vertical
composites are restrictions of $\pi$. For $\vv\in\Q^{\dK}$ we have
$\pi(\vv)=(v_1\omega_1+\dots+v_\dK\omega_\dK)^{\Sset}$, which vanishes
only if $\vv=\vzero$, since $\theta^{\Sset}$ determines $\theta$ and
the $\omega_t$ are linearly independent over $\Q$. Hence $\pi$ is
injective on $\Q^{\dK}$, and
$\omega_1^{\Sset},\dots,\omega_\dK^{\Sset}$ is a basis of $\lat$,
which has rank $\dK$.

\subsection{The zero estimate}

Let $\kappa\ge 1$, and let $\lat\subset\C^{\kappa}$ be a nontrivial
finitely generated subgroup of rank $l$. For a linear subspace
$V\subset\C^{\kappa}$ the group $\lat\cap V$ has rank at most $l$, and
we define
\[
\mu(\lat)\defeq\min_{V}\frac{l-\rk(\lat\cap V)}{\kappa-\dim V},
\]
the minimum taken over all \emph{proper} linear subspaces $V$ of
$\C^{\kappa}$, including $V=\{\vzero\}$ (and thus,
$\mu(\lat)\le l/\kappa$). This is Masser's generalized Dirichlet
exponent, and we use his zero estimate in the following form.

\begin{theorem}[{Masser~\cite[Theorem~1]{Ma}}]\label{thm:masser}
Let $\lat\subset\C^{\kappa}$ be a nontrivial subgroup of rank $l$ with
basis $\vv_1,\dots,\vv_l$, and let $M\ge 1$. If a nonzero polynomial
$\poly\in\C[z_1,\dots,z_\kappa]$ vanishes at every point
$m_1\vv_1+\dots+m_l\vv_l$ with integers $0\le m_j\le M$, then
\[
\deg \poly\ge(M/\kappa)^{\mu(\lat)}.
\]
\end{theorem}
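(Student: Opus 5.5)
This is Masser's zero estimate, and I would not reprove it. The plan is to take it from \cite[Theorem~1]{Ma} and check that the normalization there agrees with the one here. Concretely, three points have to match. First, Masser's $\mu(\lat)$ must be the quantity defined above: a minimum over proper subspaces $V\subset\C^{\kappa}$, including $V=\{\vzero\}$, of $(l-\rk(\lat\cap V))/(\kappa-\dim V)$. Second, his hypothesis must be vanishing on the box $\{m_1\vv_1+\dots+m_l\vv_l:0\le m_j\le M\}$ with respect to a basis of $\lat$. Third, his conclusion must carry the constant $\kappa$ in $(M/\kappa)^{\mu(\lat)}$ and not some other normalization.

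If his box uses a different parametrization, such as $|m_j|\le M$ or $0\le m_j<M$, I would reduce to his form. A translate of a zero set of $\poly$ is a zero set of a translate of $\poly$ of the same degree. Shrinking or shifting $M$ by a bounded amount only changes the constant, which I would absorb by adjusting the stated inequality.

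For the reader, I would also record why such a bound should hold, following Masser's argument. Suppose $\deg\poly=D<(M/\kappa)^{\mu(\lat)}$. The box contains roughly $M^{l}$ points, while polynomials of degree $D$ span a space of dimension about $D^{\kappa}$. A pure dimension count therefore fails only when the points concentrate on an algebraic subvariety. The argument then runs in three steps.
\begin{enumerate}
\item Apply Philippon's or Masser--W\"ustholz's multiplicity/zero lemma for the linear algebraic group $\mathbb{G}_a^{\kappa}$, which is the additive case. This shows that if $\poly$ vanishes on the sumset $\Sigma(M)$, there is an obstruction: a connected algebraic subgroup, which here is a linear subspace $V$.
\item The obstruction satisfies $\bigl|\Sigma(M/\kappa)+V\,/\,V\bigr|\cdot\deg(\text{ambient}/V)\le C D^{\kappa-\dim V}$.
\item Count the image of the box modulo $V$: it has at least about $(M/\kappa)^{\,l-\rk(\lat\cap V)}$ elements. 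Comparing this with the bound in step 2 gives $D\ge(M/\kappa)^{(l-\rk(\lat\cap V))/(\kappa-\dim V)}\ge(M/\kappa)^{\mu(\lat)}$.
\end{enumerate}

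The main obstacle is the obstruction step, that is, producing $V$ with the right degree bound. This is the whole content of Masser's paper, and it is why I rely on the citation and do not reproduce it.
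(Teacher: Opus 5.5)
Your proposal takes the same route as the paper, which also quotes this statement from Masser's Theorem~1 in \cite{Ma} without proof. Everything therefore rests on the normalization you list being Masser's own: boxes $0\le m_j\le M$ in a basis of $\lat$, $\mu(\lat)$ taken over all proper complex subspaces including $\{\vzero\}$, and the factor $\kappa$. Your fallback of absorbing a differently shaped box by ``adjusting the stated inequality'' would only give $\deg\poly\ge(M/\kappa)^{\mu(\lat)}$ up to a constant factor, and so would the unspecified $C$ in step~2 of your sketch; that loss is harmless for Corollary~\ref{cor:zero} and Proposition~\ref{prop:main}, but it is not the inequality as stated.
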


The field enters the zero estimate only through the next
proposition, which
asserts that any $\nT$ of the $\dK$ embeddings of $\Kf$ see at least
the fraction $\nT/\dK$ of every rational subspace.

\begin{proposition}\label{prop:mu}
$\mu(\lat)\ge\dK/\nS$.
\end{proposition}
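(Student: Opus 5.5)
The plan is to reduce Proposition~\ref{prop:mu} to a rank inequality for the matrices $\vu^{\Tset}$, and then to prove that inequality by averaging over the Galois group of the normal closure. First the reduction. Fix a proper subspace $V\subset\C^{\nS}$ and put $r\defeq\rk(\lat\cap V)$. The required inequality
\[
\frac{\dK-r}{\nS-\dim V}\ge\frac{\dK}{\nS}
\]
is equivalent to $\nS\,r\le\dK\dim V$, i.e.\ to $\dim V\ge r\nS/\dK$; for $V=\{\vzero\}$ it is trivial. Since $\pi$ is injective on $\Q^{\dK}$, the group $\lat\cap V$ is $\pi$ of a subgroup of $\Z^{\dK}$ of rank $r$. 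Its $\Q$-span therefore corresponds to an $r$-dimensional $\Q$-subspace $U\subset\Kf$. Choose a $\Q$-basis $\vu=(u_1,\dots,u_r)$ of $U$, which we may take inside $\lat\cap V$ after scaling. The columns $u_j^{\Sset}$ of $\vu^{\Sset}$ lie in $V$, so $\dim V\ge\rk_\C\vu^{\Sset}$. It therefore suffices to prove the following: for every $\Q$-linearly independent $\vu\in\Kf^r$ and every nonempty set $\Tset$ of $\nT$ embeddings,
\be\label{eq:rankclaim}
\rk\vu^{\Tset}\ge\frac{\nT\,r}{\dK}.
\ee

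Next, two facts about \eqref{eq:rankclaim}. Let $\Tset_{\rm all}$ denote the set of all $\dK$ embeddings. The full matrix $\vu^{\Tset_{\rm all}}$ has rank $r$: a vector $\vv\in\C^r$ with $\vu^{\Tset_{\rm all}}\vv=\vzero$ would give a $\C$-linear relation among the columns, but the $\dK\times\dK$ matrix built from a $\Q$-basis of $\Kf$ extending $\vu$ is nonsingular, since its square is the discriminant. Now let $L$ be the normal closure of $\Kf$ in $\C$ and $\galL=\Gal(L/\Q)$, which acts on the embeddings $\Kf\to\C$ by composition, transitively. All entries of $\vu^{\Tset}$ lie in $L$, and applying $g\in\galL$ entrywise turns $\vu^{\Tset}$ into $\vu^{g\Tset}$, with rows permuted. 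A field automorphism preserves rank, so $\rho\defeq\rk\vu^{g\Tset}$ is independent of $g$. By transitivity, each embedding $\iota$ lies in $g\Tset$ for exactly $|\galL|\nT/\dK$ elements $g$.

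The final step, which I expect to be the main obstacle, is the averaging inequality itself. Let $f(A)$ denote the rank of the rows of $\vu^{\Tset_{\rm all}}$ indexed by a set $A$ of embeddings. This is a matroid rank function, hence monotone, submodular, and $f(\emptyset)=0$. For such functions the fractional subadditivity holds: if the sets $A_1,\dots,A_m$ cover every element exactly $c$ times, then $c\,f(\bigcup A_i)\le\sum_i f(A_i)$. I would prove this quickly by ordering the ground set and writing $f$ as a telescoping sum of marginal gains. Each marginal gain of an element relative to its predecessors in $A_i$ is at least its marginal gain relative to all its predecessors, by submodularity. Summing over $i$ counts each global marginal gain $c$ times. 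Applying this to the family $\{g\Tset\}_{g\in\galL}$ with $c=|\galL|\nT/\dK$ gives
\[
\frac{|\galL|\nT}{\dK}\,r\le|\galL|\,\rho,
\]
which is \eqref{eq:rankclaim}. Taking $\Tset=\Sset$ and $\nT=\nS$ then finishes the proof.
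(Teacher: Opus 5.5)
Your proof is correct and follows the same route as the paper. It uses the same reduction to $\rk\vu^{\Tset}\ge\nT r/\dK$ for an arbitrary set $\Tset$ of embeddings, the same two facts (the full matrix has rank $r$ because an extended basis has nonzero discriminant, and $\galL$ acting entrywise preserves rank), and the same average over $\galL$ with the fiber count $|\galL|\nT/\dK$.

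The only difference is the step you expected to be the main obstacle, which the paper handles without submodularity. It fixes a set $\cB$ of $r$ embeddings whose rows of the full matrix are independent, observes that $\rk\vu^{g\Tset}\ge|g\Tset\cap\cB|$, and sums over $g\in\galL$ to get $|\galL|\,\rk\vu^{\Tset}\ge\nT r|\galL|/\dK$ directly. This incidence count is the standard one-line proof of fractional subadditivity for a matroid rank function, so your lemma is valid but more general than this situation requires.
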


\begin{proof}
Let $V\subset\C^{\nS}$ be a proper subspace of dimension $d$, and put
$r\defeq\rk(\lat\cap V)$. We must show that $r\le\dK d/\nS$, which is
equivalent to $(\dK-r)/(\nS-d)\ge\dK/\nS$.

Since $\pi$ is injective on $\Q^{\dK}$, the set
\[
U\defeq\{v_1\omega_1+\dots+v_\dK\omega_\dK:\vv\in\Q^{\dK},\
\pi(\vv)\in V\}
\]
is a $\Q$-subspace of $\Kf$ of dimension $r$; in other words, $U$ is
the image of $\pi^{-1}(V)\cap\Q^{\dK}$ in the diagram
\[
\begin{tikzcd}[column sep=large, row sep=large]
\pi^{-1}(V)\cap\Q^{\dK} \arrow[r, "\sim"'] \arrow[d, hook]
\arrow[rr, bend left=20, "\pi", start anchor=north, end anchor=north]
& U \arrow[r, "u\,\mapsto u^{\Sset}"'] \arrow[d, hook]
& V \arrow[d, hook] \\
\Q^{\dK} \arrow[r, "\sim"']
\arrow[rr, bend right=40, "\pi"', start anchor=south, end anchor=south]
& \Kf \arrow[r, "\theta\,\mapsto\theta^{\Sset}"']
& \C^{\nS}
\end{tikzcd}
\]
whose bottom row is $\pi$ restricted to $\Q^{\dK}$. Fix a $\Q$-basis
$u_1,\dots,u_r$ of $U$ and put
\[
\vu\defeq(u_1,\dots,u_r)\in\Kf^r.
\]
For $u\in U$ the vector $u^{\Sset}$ lies in $V$, so the columns
$u_1^{\Sset},\dots,u_r^{\Sset}$ of the matrix $\vu^{\Sset}$ lie in
$V$, and its rank is at most~$d$. It therefore suffices to show that
$\rk\vu^{\Sset}\ge\nS\,r/\dK$, and we prove this for \emph{any} set
$\Tset$ of $\nT$ distinct embeddings $\Kf\to\C$ in place of $\Sset$.

Let $\cE$ be the set of all $\dK$ embeddings $\Kf\to\C$, let
$\widetilde{\Kf}\subset\C$ be the compositum of the fields
$\iota(\Kf)$, $\iota\in\cE$, which is the normal closure of $\Kf$, and
put $\galL\defeq\Gal(\widetilde{\Kf}/\Q)$. The fields and groups in
play are
\[
\begin{tikzcd}[row sep=large]
\widetilde{\Kf} \arrow[d, dash]
\arrow[ddd, dash, bend right=45, "\galL=\Gal(\widetilde{\Kf}/\Q)"',
start anchor=west, end anchor=west] \\
\Kf \arrow[d, dash, "\Sset\subset\galKk=\Gal(\Kf/k)"] \\
k \arrow[d, dash, "\dk"] \\
\Q
\end{tikzcd}
\]
with $\Kf/k$ and $\widetilde{\Kf}/\Q$ Galois, and $\Kf/\Q$ in general
not Galois. For $\iota,\iota'\in\cE$ the isomorphism
$\iota'\circ\iota^{-1}\colon\iota(\Kf)\to\iota'(\Kf)$ extends to an
element $g\in\galL$, which is to say that the diagram
\[
\begin{tikzcd}[column sep=large, row sep=large]
\widetilde{\Kf} \arrow[rr, "g"]
& & \widetilde{\Kf} \\
\iota(\Kf) \arrow[u, hook] \arrow[rr, "\iota'\circ\iota^{-1}"']
& & \iota'(\Kf) \arrow[u, hook] \\
& \Kf \arrow[lu, "\iota"] \arrow[ru, "\iota'"'] &
\end{tikzcd}
\]
commutes. Hence $\galL$ acts transitively on $\cE$ by
$\iota\mapsto g\circ\iota$, and every fiber
$\{g\in\galL:g\circ\iota=\iota'\}$ is a coset of the stabilizer of
$\iota$ and so has $[\widetilde{\Kf}:\iota(\Kf)]=|\galL|/\dK$
elements. For $\Tset'\subset\cE$ the matrix $\vu^{\Tset'}$ has entries
in $\widetilde{\Kf}$, and our end goal is to show that
$\rk\vu^{\Tset}\ge\nT\,r/\dK$. The idea is to use a double count. Some
$r$ rows of the full matrix $\vu^{\cE}$ are independent, and as $g$
runs over $\galL$ the translates $g\Tset$ sweep uniformly over the
rows, so on average $\Tset$ meets those $r$ rows in $\nT\,r/\dK$
places.

Three facts illuminate the path.

(a) $\rk\vu^{\cE}=r$. Extend $u_1,\dots,u_r$ to a $\Q$-basis
$u_1,\dots,u_\dK$ of $\Kf$. The square matrix
$(u_1,\dots,u_\dK)^{\cE}$ has nonzero determinant, its square being
the discriminant of the basis, so its columns
$u_1^{\cE},\dots,u_\dK^{\cE}$ are linearly independent; the first $r$
of them are the columns of $\vu^{\cE}$.

(b) $\rk\vu^{g\Tset'}=\rk\vu^{\Tset'}$ for $g\in\galL$. Indeed, the
diagram
\[
\begin{tikzcd}[column sep=large, row sep=large]
\widetilde{\Kf} \arrow[rr, "g"] & & \widetilde{\Kf}
& \iota(\vu) \arrow[r, "g\ \text{entrywise}"] & (g\circ\iota)(\vu) \\
& \Kf \arrow[lu, "\iota"] \arrow[ru, "g\circ\iota"'] & & &
\end{tikzcd}
\]
commutes, and therefore $\vu^{g\Tset'}$ is the entrywise image of
$\vu^{\Tset'}$ under the field automorphism $g$, which preserves rank.

(c) In view of (a), there is a set $\cB\subset\cE$ of $r$ embeddings
such that the rows $\iota(\vu)$, $\iota\in\cB$, of $\vu^{\cE}$ are
linearly independent, that is, $\rk\vu^{\cB}=r$; hence
$\rk\vu^{\Tset'}\ge|\Tset'\cap\cB|$ for every $\Tset'\subset\cE$.

\medskip Now, using (b), (c), and the fiber count, we have
\[
|\galL|\,\rk\vu^{\Tset}=\sum_{g\in\galL}\rk\vu^{g\Tset}
\ge\sum_{g\in\galL}|g\Tset\cap\cB|
=\sum_{\iota\in\Tset}\sum_{\iota'\in\cB}
\big|\{g\in\galL:g\circ\iota=\iota'\}\big|
=\nT\,r\,\frac{|\galL|}{\dK},
\]
and the proposition follows.
\end{proof}

Taking $V=\{\vzero\}$ shows that in fact $\mu(\lat)=\dK/\nS$.

\begin{corollary}\label{cor:zero}
Let $\dgb<((\bxs-1)/\nS)^{\dK/\nS}$. Then no nonzero polynomial
$\poly\in\C[z_\emb:\emb\in\Sset]$ of total degree at most $\dgb$
vanishes at every $\theta_\vn^{\Sset}$, $\vn\in\cube$.
\end{corollary}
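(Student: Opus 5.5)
The corollary follows by feeding Proposition~\ref{prop:mu} into Masser's Theorem~\ref{thm:masser}. We apply Theorem~\ref{thm:masser} with $\kappa\defeq\nS$, with $\lat$ the group generated by $\omega_1^{\Sset},\dots,\omega_\dK^{\Sset}$, and with $l\defeq\dK$, using that basis. By the discussion preceding Proposition~\ref{prop:mu}, this set is indeed a basis of $\lat$, since $\pi$ is injective on $\Q^{\dK}$. We take $M\defeq\bxs-1$, which is at least $1$ since $\bxs\ge 2$. The points $m_1\omega_1^{\Sset}+\dots+m_\dK\omega_\dK^{\Sset}$ with integers $0\le m_j\le M$ are then exactly the points $\theta_\vn^{\Sset}$ with $\vn\in\cube$.

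Suppose a nonzero $\poly$ of total degree at most $\dgb$ vanished at all of them. Theorem~\ref{thm:masser} would give
\[
\deg\poly\ge\big((\bxs-1)/\nS\big)^{\mu(\lat)}.
\]
Proposition~\ref{prop:mu} gives $\mu(\lat)\ge\dK/\nS$; in fact equality holds, by the remark after its proof. Hence the right-hand side equals $((\bxs-1)/\nS)^{\dK/\nS}$, which exceeds $\dgb$ by hypothesis. This contradicts $\deg\poly\le\dgb$.

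The only point needing care is the direction of the monotonicity in the exponent. If the base $(\bxs-1)/\nS$ is at least $1$, a larger exponent only increases the bound, so even the inequality $\mu\ge\dK/\nS$ would suffice. Since we know the equality $\mu(\lat)=\dK/\nS$ (take $V=\{\vzero\}$ in the definition), no monotonicity argument is needed at all, and the case of a base below $1$ causes no trouble. There is no real obstacle; the corollary is a direct substitution.
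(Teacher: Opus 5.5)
Your proof is correct and is the paper's argument: Masser's Theorem~\ref{thm:masser} with $\kappa=\nS$, $l=\dK$, $M=\bxs-1$, combined with Proposition~\ref{prop:mu}. Your use of the equality $\mu(\lat)=\dK/\nS$, which removes any concern when the base $(\bxs-1)/\nS$ is below $1$, is a point the paper leaves implicit.
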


\begin{proof}
Theorem~\ref{thm:masser} with $\kappa=\nS$, $l=\dK$, $M=\bxs-1$, and
Proposition~\ref{prop:mu}.
\end{proof}

Corollary~\ref{cor:zero} is the only output of this section used in
what follows, and the ratio $\dK/\nS$ in its exponent is the source of
the constant $|\Sset|/|\galKk|$ in Theorem~\ref{thm:main}. For
$\Kf=\Q(\sqrt d,\sqrt 2\,)$ and $\nS=3$ the exponent is the $\tfrac43$
of~\cite{Ke}; in the quadratic case of \S\ref{sec:quadratic}, where
$\Kf=F(\rou)$ and $\nS=\tfrac12[\Kf:k]+1$, it is
$2\dk[\Kf:k]/([\Kf:k]+2)$.

\section{Polynomials vanishing modulo many primes}\label{sec:poly}

In this section we construct, by the pigeonhole principle, a nonzero
polynomial that vanishes at every point $\theta_\vn^{\Sset}$,
provided that the primes with Frobenius in $\Sset$ carry enough
logarithmic mass (Lemma~\ref{lem:pigeon}). The mechanism is the
congruence $\theta^{\Nm\fp}\equiv\sigma(\theta)$ modulo the primes
above $\fp$ with Frobenius $\sigma$, which makes the value at every
point of a polynomial divisible by $(z_1^{\Nm\fp}-z_\sigma)^b$ lie in
the $b$-th power of the product of those primes.

\subsection{The Frobenius congruence}

Fix an integer $\wgt\ge 2$. A prime $\fp$ of $k$ is \emph{admissible}
if it is unramified in $\Kf$ and $\Nm\fp\ge\wgt$. For an admissible
$\fp$ and $\sigma\in\cls_\fp$ define the ideal
\be\label{eq:Q}
\fQ_{\fp,\sigma}\defeq\prod_{\fP\mid\fp,\ \Frob_{\,\fP}=\sigma}\fP
\subset\cO_\Kf .
\ee
Since distinct $\fp$ have distinct primes above them, and distinct
$\sigma$ have disjoint sets of $\fP$, the ideals \eqref{eq:Q} are
pairwise coprime.

\begin{lemma}[Frobenius]\label{lem:frob}
Let $\fp$ be admissible and $\sigma\in\cls_\fp$. Then
\be\label{eq:frob}
\theta^{\Nm\fp}\equiv\sigma(\theta)\bmod\fQ_{\fp,\sigma}
\qquad(\theta\in\cO_\Kf),
\ee
and $\Nm\fQ_{\fp,\sigma}=\Nm\fp^{\,|\galKk|/|\cls_\fp|}$.
\end{lemma}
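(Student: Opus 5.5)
The plan is to prove the congruence one prime at a time and then use the Chinese remainder theorem; the norm formula comes from counting primes.

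\emph{The congruence.} Let $\fP\mid\fp$ with $\Frob_{\,\fP}=\sigma$. Since $\fp$ is unramified in $\Kf$, the decomposition group $D_\fP\subset\galKk$ is isomorphic to $\Gal\big((\cO_\Kf/\fP)/(\cO_k/\fp)\big)$. By definition, $\Frob_{\,\fP}$ is the unique element of $D_\fP$ that induces $x\mapsto x^{\Nm\fp}$ on the residue field, where $\Nm\fp=|\cO_k/\fp|$. Hence $\sigma(\theta)\equiv\theta^{\Nm\fp}\bmod\fP$ for every $\theta\in\cO_\Kf$. Distinct primes $\fP$ are pairwise coprime, and $\fQ_{\fp,\sigma}$ is their product with each factor appearing once. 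So $\fQ_{\fp,\sigma}$ is the intersection of these $\fP$, and the congruence \eqref{eq:frob} follows. Note that admissibility ($\Nm\fp\ge\wgt$) plays no role here; only unramifiedness is used.

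\emph{The norm.} Let $g$ be the number of primes of $\Kf$ above $\fp$ and $f$ their common residue degree. Since there is no ramification, $fg=|\galKk|$, and each $\fP\mid\fp$ has $\Nm\fP=\Nm\fp^{\,f}$. The group $\galKk$ acts transitively on the $g$ primes, with stabilizers the decomposition groups, each cyclic of order $f$ and generated by the corresponding Frobenius. The map $\fP\mapsto\Frob_{\,\fP}$ sends the $g$ primes onto the class $\cls_\fp$, and it is equivariant: $\Frob_{\,h\fP}=h\Frob_{\,\fP}h^{-1}$. Therefore each fiber has the same size $g/|\cls_\fp|$, and
\[
\Nm\fQ_{\fp,\sigma}=\Nm\fp^{\,fg/|\cls_\fp|}=\Nm\fp^{\,|\galKk|/|\cls_\fp|}.
\]

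\emph{Main obstacle.} The one delicate point is the equal-fiber claim. If $\fP$ and $h\fP$ have the same Frobenius $\sigma$, then $h$ lies in the centralizer $Z(\sigma)$. The fiber over $\sigma$ is therefore the orbit of $\fP$ under $Z(\sigma)$, which has size $|Z(\sigma)|/|Z(\sigma)\cap D_\fP|$. Now $D_\fP=\langle\sigma\rangle\subset Z(\sigma)$, so this size is $|Z(\sigma)|/f$. Using $|Z(\sigma)|=|\galKk|/|\cls_\fp|$, the fiber size is $g/|\cls_\fp|$, which confirms the count above.
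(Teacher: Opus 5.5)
Your proof is correct and follows the paper's argument: the Frobenius congruence modulo each $\fP$ with $\Frob_{\,\fP}=\sigma$, the Chinese remainder theorem, and then the norm formula from the fact that the equivariant surjection $\fP\mapsto\Frob_{\,\fP}$ onto $\cls_\fp$ has fibers of equal size $|\galKk|/(f|\cls_\fp|)$. Your centralizer-orbit computation only makes explicit the equal-fiber step, which the paper reads off directly from equivariance and transitivity.
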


\begin{proof}
By definition of the Frobenius element,
$\theta^{\Nm\fp}\equiv\Frob_{\,\fP}(\theta)\bmod\fP$ for every
$\theta\in\cO_\Kf$ and every $\fP\mid\fp$, and \eqref{eq:frob} follows
by the Chinese remainder theorem.

Let $f_\fp$ be the residue degree of the primes above $\fp$, which is
the order of the elements of $\cls_\fp$. Since $\fp$ is unramified,
there are $|\galKk|/f_\fp$ primes $\fP\mid\fp$, each of norm
$\Nm\fp^{f_\fp}$. The map $\fP\mapsto\Frob_{\,\fP}$ from these primes
onto $\cls_\fp$ is equivariant for the action of $\galKk$, by
transitivity on the primes and by
$\Frob_{\,g\fP}=g\Frob_{\,\fP}g^{-1}$, so all its fibers have the same
size $|\galKk|/(f_\fp|\cls_\fp|)$. Hence $\fQ_{\fp,\sigma}$ is a
product of $|\galKk|/(f_\fp|\cls_\fp|)$ primes of norm
$\Nm\fp^{f_\fp}$, and
$\Nm\fQ_{\fp,\sigma}=\Nm\fp^{\,|\galKk|/|\cls_\fp|}$.
\end{proof}

\subsection{The module of polynomials}

We work with $\nS$ complex variables $z_\emb$, $\emb\in\Sset$, and
write $z_1$ for the variable indexed by the identity of $\galKk$. Put
$\Sset^*\defeq\Sset\setminus\{1\}$. For a multi-index
$\va=(\alpha_\emb)_{\emb\in\Sset}$ of nonnegative integers we write
$\va^*\defeq(\alpha_\emb)_{\emb\in\Sset^*}$ for the multi-index with
the identity coordinate removed, and we put
\[
|\va|\defeq\sum_{\emb\in\Sset}\alpha_\emb,\qquad
\vz^{\va}\defeq\prod_{\emb\in\Sset}z_\emb^{\alpha_\emb},
\]
and likewise $|\va^*|$ and $\vz^{\va^*}$ with $\Sset^*$ in place of
$\Sset$, so that $|\va|=\alpha_1+|\va^*|$ and
$\vz^{\va}=z_1^{\alpha_1}\vz^{\va^*}$. Fix an integer $\dgb\ge 1$ and
define
\be\label{eq:idx}
\idx\defeq\{\va:\ \alpha_1+\wgt|\va^*|\le \dgb\},\qquad
\rr\defeq|\idx|,\qquad
W\defeq\sum_{\va\in\idx}|\va|.
\ee
Thus $\idx$ indexes the monomials $\vz^{\va}$ of weighted degree at
most $\dgb$, where $z_1$ has weight one and every other variable has
weight $\wgt$; the reason for the weights appears in
Lemma~\ref{lem:basis}. For a prime $\fp$ of $k$ put
\be\label{eq:E}
\Ew_\fp\defeq\sum_{\va\in\idx}\Big\lfloor\frac{\alpha_1}{\Nm\fp}\Big\rfloor
.
\ee
Let $\PolyD$ be the free $\cO_\Kf$-module with basis the monomials
$\vz^{\va}$, $\va\in\idx$; every $f\in\PolyD$ has total degree at most
$\dgb$. For $f=\sum_{\va\in\idx}c_{\va}\vz^{\va}$ in $\PolyD$ define
\[
\house{f}_\hB\defeq\max_{\va\in\idx}~\house{c_{\va}}\hB^{\,|\va|}.
\]
Since every coordinate of $\theta_\vn^{\Sset}$ lies in $\cO_\Kf$ and
has house at most $\hB$, we have
\be\label{eq:fP}
f(\theta_\vn^{\Sset})\in\cO_\Kf\mand
\house{f(\theta_\vn^{\Sset})}\le \rr\house{f}_\hB\qquad(\vn\in\cube).
\ee

\begin{lemma}\label{lem:basis}
Let $\nu\ge \wgt$ be an integer and $\sigma\in\Sset$. For $\va\in\idx$
write $\alpha_1=b\nu+v$ with $0\le v<\nu$ and put
\[
G_{\va}\defeq z_1^{v}(z_1^\nu-z_\sigma)^{b}\,\vz^{\va^*}.
\]
Then the $G_{\va}$, $\va\in\idx$, form an $\cO_\Kf$-basis of $\PolyD$.
\end{lemma}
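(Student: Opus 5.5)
The plan is to show that the change-of-basis matrix from the monomials $\vz^{\va}$, $\va\in\idx$, to the polynomials $G_{\va}$, $\va\in\idx$, is a square matrix with integer entries that is unitriangular for a suitable ordering of $\idx$. Such a matrix has determinant $1$ and is therefore invertible over $\Z\subset\cO_\Kf$. Since $\PolyD$ is free of rank $\rr=|\idx|$ on the monomials, the $\rr$ elements $G_{\va}$ then form an $\cO_\Kf$-basis.

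The first step is to verify that each $G_{\va}$ lies in $\PolyD$. Expand by the binomial theorem:
\[
G_{\va}=\sum_{j=0}^{b}\binom{b}{j}(-1)^j\,z_1^{\,v+\nu(b-j)}\,z_\sigma^{\,j}\,\vz^{\va^*}.
\]
We split into two cases according to whether $\sigma$ is the identity.

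\emph{Case $\sigma\ne1$.} The $j$-th term is a monomial $\vz^{\va_j}$ with first coordinate $\alpha_1-\nu j$ and $|\va_j^*|=|\va^*|+j$. Its weighted degree is
\[
\alpha_1-\nu j+\wgt(|\va^*|+j)=\alpha_1+\wgt|\va^*|-j(\nu-\wgt)\le\dgb,
\]
because $\nu\ge\wgt$. So $\va_j\in\idx$.

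\emph{Case $\sigma=1$.} Here $z_\sigma=z_1$, so the $j$-th term is $z_1^{\,\alpha_1-(\nu-1)j}\vz^{\va^*}$. This has weighted degree at most that of $\vz^{\va}$, so again its index lies in $\idx$.

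This is exactly where the weights enter. Trading $z_1^{\nu}$ for $z_\sigma$ must not raise the weighted degree, which requires $\wgt\le\nu$. I expect this membership check to be the only point needing care; the rest is formal.

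The second step is triangularity. Order $\idx$ by the first coordinate $\alpha_1$, breaking ties arbitrarily. The $j=0$ term of $G_{\va}$ is $\vz^{\va}$ itself, with coefficient $1$. Every term with $j\ge1$ has first coordinate strictly smaller than $\alpha_1$. For $\sigma\ne1$ it is $\alpha_1-\nu j$. For $\sigma=1$ it is $\alpha_1-(\nu-1)j$, which is smaller since $\nu\ge\wgt\ge2$. Hence each $G_{\va}$ equals $\vz^{\va}$ plus an integer combination of monomials $\vz^{\vv b}$ with $\vv b\in\idx$ and $\beta_1<\alpha_1$. In this ordering the transition matrix is therefore unitriangular with entries in $\Z$, which gives the basis property. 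Note that for $\sigma=1$ distinct terms may combine into the same monomial, but this does not affect the argument: only the strict drop in the $z_1$-exponent is used.
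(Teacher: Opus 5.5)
Your proposal is correct and follows the paper's proof essentially step for step: the binomial expansion, the weighted-degree check split into the cases $\sigma\ne1$ and $\sigma=1$ (the first being where $\nu\ge\wgt$ is used), and unitriangularity of the transition matrix when the monomials are ordered by $z_1$-degree. Your closing caveat is unnecessary but harmless. For $\sigma=1$ the exponents $\alpha_1-(\nu-1)j$ are distinct for distinct $j$, so no terms actually combine.
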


\begin{proof}
Expanding the binomial,
\[
G_{\va}=\vz^{\va}+\sum_{j=1}^{b}(-1)^j\binom bj\,
z_1^{\alpha_1-\nu j}z_\sigma^{j}\vz^{\va^*}.
\]
For $j\ge 1$ the monomial
$z_1^{\alpha_1-\nu j}z_\sigma^{j}\vz^{\va^*}$ has weighted degree
$\alpha_1+\wgt|\va^*|-(\nu-\wgt)j\le\dgb$ if $\sigma\ne 1$, and it is
$z_1^{\alpha_1-(\nu-1)j}\vz^{\va^*}$, of weighted degree
$\alpha_1+\wgt|\va^*|-(\nu-1)j\le\dgb$, if $\sigma=1$. In either case
it is a basis monomial of $\PolyD$ of $z_1$-degree less than
$\alpha_1$. Hence, when the basis monomials are ordered by
$z_1$-degree, the matrix expressing the $G_{\va}$ in terms of them is
unitriangular.
\end{proof}

This is the purpose of the weights. The substitution
$z_1^\nu\mapsto z_\sigma$ lowers the $z_1$-degree by $\nu\ge\wgt$ and
raises $|\va^*|$ by one, so it does not leave $\PolyD$; with all
variables of weight one it would.

\begin{lemma}\label{lem:vanish}
Let $\fp$ be admissible, let $\sigma\in\Sset\cap\cls_\fp$ be fixed,
and let $G_{\va}$ be the basis of Lemma~\ref{lem:basis} for
$\nu\defeq\Nm\fp$
and this $\sigma$. Then
\[
G_{\va}(\theta_\vn^{\Sset})\in\fQ_{\fp,\sigma}^{\,\lfloor \alpha_1/\Nm\fp\rfloor}
\qquad\big(\va\in\idx,\ \vn\in\cube\big).
\]
\end{lemma}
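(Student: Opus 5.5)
The claim follows by evaluating the three factors of $G_{\va}$ separately at a point $\theta_\vn^{\Sset}$ and multiplying. Fix $\vn\in\cube$ and write $\theta\defeq\theta_\vn\in\cO_\Kf$. By construction the coordinate of $\theta_\vn^{\Sset}$ indexed by $\emb\in\Sset$ is $\emb(\theta)$. In particular, the $z_1$-coordinate is $\theta$ itself, since $1$ is the identity of $\galKk$ and $\Kf\subset\C$ via the fixed embedding. The $z_\sigma$-coordinate is $\sigma(\theta)$, and this is available because $\sigma\in\Sset$. Every coordinate lies in $\cO_\Kf$, because each $\emb\in\galKk$ is an automorphism of $\Kf$ and therefore preserves $\cO_\Kf$.

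Writing $\alpha_1=b\,\Nm\fp+v$ as in Lemma~\ref{lem:basis}, so that $b=\lfloor\alpha_1/\Nm\fp\rfloor$, we obtain
\[
G_{\va}(\theta_\vn^{\Sset})=\theta^{v}\big(\theta^{\Nm\fp}-\sigma(\theta)\big)^{b}\prod_{\emb\in\Sset^*}\emb(\theta)^{\alpha_\emb}.
\]
The first factor and the last product lie in $\cO_\Kf$. The prime $\fp$ is admissible and $\sigma\in\cls_\fp$, so Lemma~\ref{lem:frob} gives $\theta^{\Nm\fp}-\sigma(\theta)\in\fQ_{\fp,\sigma}$. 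Hence the middle factor lies in $\fQ_{\fp,\sigma}^{\,b}$. Since $\fQ_{\fp,\sigma}^{\,b}$ is an ideal of $\cO_\Kf$, it absorbs the other integral factors, and the product lies in $\fQ_{\fp,\sigma}^{\,b}$.

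No step is a genuine obstacle. The only point needing care is the identification of coordinates. The Frobenius congruence relates $\theta^{\Nm\fp}$, which is the value of $z_1^{\Nm\fp}$, to $\sigma(\theta)$, which is the value of $z_\sigma$. This matching is exactly why $1\in\Sset$ and $\sigma\in\Sset$ are needed. It is also worth noting that the case $\sigma=1$ causes no trouble: there $z_\sigma=z_1$, and the congruence $\theta^{\Nm\fp}\equiv\theta$ still holds modulo $\fQ_{\fp,1}$ by Lemma~\ref{lem:frob}.
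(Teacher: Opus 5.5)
Your proposal is correct and follows the paper's proof: you identify the coordinates indexed by $1$ and $\sigma$ as $\theta_\vn$ and $\sigma(\theta_\vn)$, note that all coordinates are integral, and use the Frobenius congruence \eqref{eq:frob} to place the factor $(\theta_\vn^{\Nm\fp}-\sigma(\theta_\vn))^{b}$ in $\fQ_{\fp,\sigma}^{\,b}$ with $b=\lfloor\alpha_1/\Nm\fp\rfloor$. Your remark on the case $\sigma=1$ is a harmless extra check that the paper leaves implicit.
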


\begin{proof}
The coordinates of $\theta_\vn^{\Sset}$ indexed by $1$ and $\sigma$
are $\theta_\vn$ and $\sigma(\theta_\vn)$, and all coordinates lie in
$\cO_\Kf$. By \eqref{eq:frob},
$\theta_\vn^{\Nm\fp}-\sigma(\theta_\vn)\in\fQ_{\fp,\sigma}$, so the
factor $(z_1^{\Nm\fp}-z_\sigma)^b$ of $G_{\va}$ contributes
$\fQ_{\fp,\sigma}^{\,b}$ with $b=\lfloor \alpha_1/\Nm\fp\rfloor$.
\end{proof}

\begin{lemma}\label{lem:residues}
Let $\cS$ be a finite set of admissible primes. For $\fp\in\cS$ put
$m_\fp\defeq\max\{1,\lfloor \dgb/\Nm\fp\rfloor\}$, and define
\be\label{eq:modulus}
\modl\defeq\prod_{\fp\in\cS}\ \prod_{\sigma\in\Sset\cap\cls_\fp}
\fQ_{\fp,\sigma}^{\,m_\fp},\qquad
\nmod\defeq\Nm\modl .
\ee
Then the vectors $(f(\theta_\vn^{\Sset})\bmod\modl)_{\vn\in\cube}$,
$f\in\PolyD$, take at most
\[
\nmod^{\,\rr}\prod_{\fp\in\cS}\ \prod_{\sigma\in\Sset\cap\cls_\fp}
\Nm\fQ_{\fp,\sigma}^{-\Ew_\fp}
\]
distinct values.
\end{lemma}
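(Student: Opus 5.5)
The plan is to count, for each pair $(\fp,\sigma)$ separately, the residues modulo $\fQ_{\fp,\sigma}^{m_\fp}$, and then multiply over the pairs by the Chinese remainder theorem. The ideals $\fQ_{\fp,\sigma}$ are pairwise coprime (remark after \eqref{eq:Q}). Hence $\cO_\Kf/\modl\cong\prod_{\fp,\sigma}\cO_\Kf/\fQ_{\fp,\sigma}^{m_\fp}$, and the map
\[
f\mapsto(f(\theta_\vn^{\Sset})\bmod\modl)_\vn
\]
is determined by its components modulo each $\fQ_{\fp,\sigma}^{m_\fp}$. The number of values is at most the product over pairs of the number of values of the component maps $\phi_{\fp,\sigma}\colon\PolyD\to(\cO_\Kf/\fQ_{\fp,\sigma}^{m_\fp})^{\cube}$. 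Since $\nmod=\prod\Nm\fQ_{\fp,\sigma}^{m_\fp}$, it then suffices to show that each $\phi_{\fp,\sigma}$ takes at most
\[
\Nm\fQ_{\fp,\sigma}^{\,m_\fp\rr-\Ew_\fp}
\]
values.

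Fix a pair $(\fp,\sigma)$ and write $\fQ=\fQ_{\fp,\sigma}$. By Lemma~\ref{lem:basis} with $\nu=\Nm\fp\ge\wgt$, which holds since $\fp$ is admissible, every $f\in\PolyD$ can be written as $f=\sum_{\va\in\idx}a_\va G_\va$ with $a_\va\in\cO_\Kf$. The map $\phi_{\fp,\sigma}$ is additive. By Lemma~\ref{lem:vanish}, with $b_\va=\lfloor\alpha_1/\Nm\fp\rfloor$, the vector $(G_\va(\theta_\vn^{\Sset}))_\vn$ has all entries in $\fQ^{b_\va}$. So the value of $\phi_{\fp,\sigma}(f)$ depends only on the classes of $a_\va$ modulo $(\fQ^{m_\fp}:\fQ^{b_\va})$.

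If $b_\va\ge m_\fp$, the term contributes nothing modulo $\fQ^{m_\fp}$. If $b_\va<m_\fp$, then, because $\fQ$ is a product of distinct primes in the Dedekind domain $\cO_\Kf$, changing $a_\va$ by an element of $\fQ^{m_\fp-b_\va}$ changes $a_\va G_\va(\theta_\vn^{\Sset})$ by an element of $\fQ^{m_\fp}$. Hence $a_\va$ matters only modulo $\fQ^{m_\fp-b_\va}$, which gives at most $\Nm\fQ^{\,m_\fp-b_\va}$ choices. Taking the product over $\va$, $\phi_{\fp,\sigma}$ takes at most $\Nm\fQ^{\,\sum_\va(m_\fp-\min(b_\va,m_\fp))}$ values.

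It remains to check that $b_\va\le m_\fp$ for every $\va\in\idx$, so that the exponent equals $m_\fp\rr-\Ew_\fp$. This holds because $\alpha_1\le\dgb$, so $b_\va\le\lfloor\dgb/\Nm\fp\rfloor\le m_\fp$. The floor in $m_\fp=\max\{1,\lfloor\dgb/\Nm\fp\rfloor\}$ is exactly what makes this work; without it one would only get the weaker bound with $\min(b_\va,m_\fp)$.

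The main point needing care is the passage from the product structure to the counting. I would phrase it as the image of the $\cO_\Kf$-module map $\cO_\Kf^{\rr}\to(\cO_\Kf/\fQ^{m_\fp})^{\cube}$, $(a_\va)\mapsto\sum_\va a_\va G_\va(\theta^{\Sset}_\vn)$. This map factors through $\prod_\va\cO_\Kf/\fQ^{m_\fp-b_\va}$, which has cardinality $\Nm\fQ^{\,m_\fp\rr-\Ew_\fp}$. Combining the per-pair bounds over all $(\fp,\sigma)$ with $\sigma\in\Sset\cap\cls_\fp$ yields the stated count.
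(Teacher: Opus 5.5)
Your proposal is correct and follows the paper's proof essentially step for step. You use the Chinese remainder theorem over the pairwise coprime ideals $\fQ_{\fp,\sigma}^{\,m_\fp}$, then expand $f$ in the basis $G_{\va}$ of Lemma~\ref{lem:basis} with $\nu=\Nm\fp$, and use Lemma~\ref{lem:vanish} to see that each coefficient matters only modulo $\fQ_{\fp,\sigma}^{\,m_\fp-\lfloor\alpha_1/\Nm\fp\rfloor}$, whose exponent is nonnegative because $\alpha_1\le\dgb$.
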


\begin{proof}
The ideals $\fQ_{\fp,\sigma}^{\,m_\fp}$ in \eqref{eq:modulus} are
pairwise coprime by \eqref{eq:Q}, hence by the Chinese remainder
theorem the vector $(f(\theta_\vn^{\Sset})\bmod\modl)_\vn$ is
determined by the vectors
$(f(\theta_\vn^{\Sset})\bmod\fQ_{\fp,\sigma}^{\,m_\fp})_\vn$;
consequently, it suffices to show that for each pair $(\fp,\sigma)$
the latter takes at most $\Nm\fQ_{\fp,\sigma}^{\,m_\fp\rr-\Ew_\fp}$
values.

For fixed $(\fp,\sigma)$, write $f=\sum_{\va\in\idx}c_{\va}G_{\va}$ in
the basis of Lemma~\ref{lem:basis} with $\nu=\Nm\fp$ and this choice
of $\sigma$. By Lemma~\ref{lem:vanish}, the residue of
$c_{\va}G_{\va}(\theta_\vn^{\Sset})$ modulo
$\fQ_{\fp,\sigma}^{\,m_\fp}$ depends only on the residue of $c_{\va}$
modulo $\fQ_{\fp,\sigma}^{\,m_\fp-\lfloor \alpha_1/\Nm\fp\rfloor}$, an
ideal that has a \emph{nonnegative} exponent since $\alpha_1\le\dgb$.
Taking into account that $|\cO_\Kf/\fQ^{a}|=\Nm\fQ^{a}$ for $a\in\N$,
the vector
$(f(\theta_\vn^{\Sset})\bmod\fQ_{\fp,\sigma}^{\,m_\fp})_\vn$ takes at
most
\[
\prod_{\va\in\idx}\Nm\fQ_{\fp,\sigma}^{\,m_\fp-\lfloor \alpha_1/\Nm\fp\rfloor}
=\Nm\fQ_{\fp,\sigma}^{\,m_\fp\rr-\Ew_\fp}
\]
values, by \eqref{eq:E}, and the result is proved.
\end{proof}

\begin{lemma}[Pigeonhole]\label{lem:pigeon}
Let $\cS$, $\modl$, and $\nmod$ be as in Lemma~\ref{lem:residues},
and recall from \eqref{eq:weight} that
$\wt_\fp=|\Sset\cap\cls_\fp|/|\cls_\fp|$. If
\be\label{eq:pigeon}
\frac1{\dk}\sum_{\fp\in\cS}\wt_\fp\,\Ew_\fp\log\Nm\fp
>W\log \hB+\rr\log(4\dK\rr\lamK),
\ee
then there is a nonzero $\poly\in\PolyD$ with
$\poly(\theta_\vn^{\Sset})=0$ for every $\vn\in\cube$.
\end{lemma}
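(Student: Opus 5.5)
The plan is a Siegel-type pigeonhole: count the polynomials in $\PolyD$ with small $\house{\cdot}_\hB$, compare with the number of residue vectors from Lemma~\ref{lem:residues}, and take the difference of two colliding polynomials. That difference will have values at every point that are both divisible by $\modl$ and too small in house to be nonzero.

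\textbf{Counting polynomials.} Fix a parameter $\hgt$ and consider the coefficients $c_{\va}=\sum_t x_{\va,t}\omega_t$ with integers $0\le x_{\va,t}<\lfloor\hgt/(\dK\lamK\hB^{|\va|})\rfloor+1$. By \eqref{eq:lam} each such coefficient has $\house{c_{\va}}\hB^{|\va|}\le\hgt$. Distinct integer vectors give distinct $c_{\va}$ by linear independence of the $\omega_t$. The number of such $f$ is therefore at least
\[
\prod_{\va\in\idx}\big(\hgt/(\dK\lamK\hB^{|\va|})\big)^{\dK}
=(\hgt/(\dK\lamK))^{\dK\rr}\hB^{-\dK W}.
\]

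\textbf{Counting residues.} By Lemma~\ref{lem:residues} the residue vectors number at most $\nmod^{\rr}\prod\Nm\fQ_{\fp,\sigma}^{-\Ew_\fp}$. Lemma~\ref{lem:frob} gives $\sum_{\sigma\in\Sset\cap\cls_\fp}\log\Nm\fQ_{\fp,\sigma}=|\galKk|\wt_\fp\log\Nm\fp=(\dK/\dk)\wt_\fp\log\Nm\fp$. The logarithm of the residue count is thus $\rr\log\nmod-\frac{\dK}{\dk}\sum_{\fp\in\cS}\wt_\fp\Ew_\fp\log\Nm\fp$.

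\textbf{Choice of $\hgt$.} Set $\hgt\defeq\nmod^{1/\dK}/(2\rr)$; the factor $2$ accounts for differences. With this choice the polynomial count exceeds the residue count exactly when
\[
\dK\rr\log\frac{\nmod^{1/\dK}}{2\rr\dK\lamK}-\dK W\log\hB>\rr\log\nmod-\frac{\dK}{\dk}\sum\wt_\fp\Ew_\fp\log\Nm\fp.
\]
The $\rr\log\nmod$ terms cancel, and dividing by $\dK$ leaves a condition implied by \eqref{eq:pigeon}; the constant $4$ there leaves slack for the integer parts. One must also ensure that each range of $x_{\va,t}$ is nonempty. This is automatic from the lower bound $\lfloor y\rfloor+1\ge y$, even when $y<1$. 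Using $\lfloor y\rfloor+1\ge\max(1,y)$, the counting bound survives without assuming $\hgt$ is large, so \eqref{eq:pigeon} suffices.

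\textbf{The difference.} Pigeonhole gives $f\ne g$ in the box with the same residue vector. Put $\poly\defeq f-g\ne0$, whose coefficients satisfy $\house{c_{\va}}\hB^{|\va|}\le 2\hgt$. By \eqref{eq:fP}, each $\poly(\theta_\vn^{\Sset})\in\modl$ has house at most $2\rr\hgt=\nmod^{1/\dK}$. Strictness is needed here, so I would actually take $\hgt$ slightly smaller, say $\nmod^{1/\dK}/(4\rr)$, which is where the $4$ in \eqref{eq:pigeon} comes from. Then the house is at most $\frac12\nmod^{1/\dK}<\nmod^{1/\dK}$. A nonzero $\gamma\in\modl$ has $|N_{\Kf/\Q}\gamma|\ge\Nm\modl=\nmod$, hence $\house{\gamma}\ge\nmod^{1/\dK}$. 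So every value $\poly(\theta_\vn^{\Sset})$ vanishes.

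The main obstacle is the bookkeeping of the constants. The box sizes involve integer parts, so the lower count must be checked to be at least $(\hgt/(\dK\lamK))^{\dK\rr}\hB^{-\dK W}$. The factors $2$ (differences) and $\rr$ (the number of terms in \eqref{eq:fP}) must then be absorbed into $\log(4\dK\rr\lamK)$. I would verify this last inequality carefully, and otherwise the argument is routine.
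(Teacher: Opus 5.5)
Your proposal is correct and is essentially the paper's proof: the same box of coefficients $c_{\va}=\sum_t u_t\omega_t$ counted via $\lfloor y\rfloor+1\ge y$, the same comparison with Lemma~\ref{lem:residues} through Lemma~\ref{lem:frob}, and, after your correction, the same choice $\hgt=\nmod^{1/\dK}/(4\rr)$. With that choice the counting condition is exactly \eqref{eq:pigeon}, and the values have house at most $\tfrac12\nmod^{1/\dK}$, so they vanish as you describe; the $4$ is there to make this bound strict, not as slack for the integer parts, which need none.
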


\begin{proof}
Put $\hgt\defeq\nmod^{1/\dK}/(4\rr)$. We first count elements
$f\in\PolyD$ that have $\house{f}_\hB\le\hgt$. For each $\va\in\idx$,
the elements $c_{\va}=\sum_tu_t\omega_t$ with integers
$0\le u_t\le\hgt/(\dK\lamK\hB^{|\va|})$ are pairwise distinct, since
the $\omega_t$ are linearly independent, and satisfy
$\house{c_{\va}}\le\dK\lamK\max_tu_t\le\hgt\hB^{-|\va|}$ by
\eqref{eq:lam}. Every $f=\sum_{\va}c_{\va}\vz^{\va}$ formed in this
way has $\house{f}_\hB\le\hgt$, hence there are at least
\[
\prod_{\va\in\idx}\Big(\frac{\hgt}{\dK\lamK\hB^{|\va|}}\Big)^{\dK}
=\nmod^{\,\rr}(4\dK\rr\lamK)^{-\dK\rr}\hB^{-\dK W}
\]
of them. By Lemma~\ref{lem:residues}, these $f$ take at most
$\nmod^{\,\rr}\prod_{\fp,\sigma}\Nm\fQ_{\fp,\sigma}^{-\Ew_\fp}$
distinct residue vectors modulo $\modl$, the product over
$\fp\in\cS$ and $\sigma\in\Sset\cap\cls_\fp$. Thus, there are more
polynomials than residue vectors as soon as
\be\label{eq:count}
\prod_{\fp\in\cS}\ \prod_{\sigma\in\Sset\cap\cls_\fp}
\Nm\fQ_{\fp,\sigma}^{\,\Ew_\fp}
>(4\dK\rr\lamK)^{\dK\rr}\hB^{\dK W}.
\ee
Since $\Nm\fQ_{\fp,\sigma}=\Nm\fp^{\,|\galKk|/|\cls_\fp|}$ by
Lemma~\ref{lem:frob}, the logarithm of the left side is
\[
\sum_{\fp\in\cS}\ \sum_{\sigma\in\Sset\cap\cls_\fp}
\Ew_\fp\log\Nm\fQ_{\fp,\sigma}
=|\galKk|\sum_{\fp\in\cS}\wt_\fp\,\Ew_\fp\log\Nm\fp
=\frac{\dK}{\dk}\sum_{\fp\in\cS}\wt_\fp\,\Ew_\fp\log\Nm\fp,
\]
hence \eqref{eq:count} is equivalent to \eqref{eq:pigeon} and so
holds by hypothesis. Consequently, there are
distinct $f_1,f_2$ with $\house{f_j}_\hB\le\hgt$ and
$f_1(\theta_\vn^{\Sset})\equiv f_2(\theta_\vn^{\Sset})\bmod\modl$ for
all $\vn\in\cube$. Put $\poly\defeq f_1-f_2\ne 0$. Then
$\house{\poly}_\hB\le 2\hgt$, so \eqref{eq:fP} implies that
\[
\poly(\theta_\vn^{\Sset})\in\modl\mand
\house{\poly(\theta_\vn^{\Sset})}\le\rr\house{\poly}_\hB
\le 2\rr\hgt=\tfrac12\nmod^{1/\dK}\qquad(\vn\in\cube).
\]
But a nonzero $y\in\modl$ has
$\house{y}^{\dK}\ge|N_{\Kf/\Q}(y)|\ge\Nm\modl=\nmod$, since $\modl$
divides $y\cO_\Kf$. Hence $\poly(\theta_\vn^{\Sset})=0$ for every
$\vn\in\cube$.
\end{proof}

The factor $\dK$ in the count of polynomials and the factor
$|\galKk|=\dK/\dk$ in the norms of Lemma~\ref{lem:frob} have been
divided out of \eqref{eq:pigeon}, leaving the factor $1/\dk$ on the
left. This is the only place where the degree of the base field
enters, and it cancels in \S\ref{sec:proof} against the same factor in
$\dk\log\bxs$. From this section, \S\ref{sec:proof} uses
Lemma~\ref{lem:pigeon} and
the quantities $\rr$, $W$, and $\Ew_\fp$ of \eqref{eq:idx}
and~\eqref{eq:E}.

\section{Proof of Theorems~\ref{thm:main} and~\ref{thm:precise}}
\label{sec:proof}

Recall that $\dk=[k:\Q]$, $\dK=[\Kf:\Q]$, and $\nS=|\Sset|$. We first
record the sizes of the quantities attached to $\idx$. Put
\[
\Ic\defeq\frac{\dgb^{\nS+1}}{(\nS+1)!\,\wgt^{\nS-1}} .
\]

\begin{lemma}\label{lem:count}
Let $\nS\ge 1$ and $\wgt\ge 2$ be fixed. As $\dgb\to\infty$,
\[
\rr=\frac{(\nS+1)\Ic}{\dgb}+O(\dgb^{\nS-1}),\qquad
W=\Big(1+\frac{\nS-1}{\wgt}\Big)\Ic+O(\dgb^{\nS}),\qquad
\Ew_\fp=\frac{\Ic}{\Nm\fp}+O(\dgb^{\nS}),
\]
the last uniformly in the prime $\fp$ of $k$, with implied constants
depending only on $\nS$ and $\wgt$.
\end{lemma}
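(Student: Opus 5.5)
The plan is to evaluate all three quantities by first summing over the non-distinguished exponents $\va^*$ with $j\defeq|\va^*|$ fixed, and then over $\alpha_1$. The number of $\va^*\in\Z_{\ge0}^{\nS-1}$ with $|\va^*|=j$ is $\binom{j+\nS-2}{\nS-2}=j^{\nS-2}/(\nS-2)!+O(j^{\nS-3}+1)$ for $\nS\ge2$. The case $\nS=1$ is degenerate: there $\idx=\{0,\dots,\dgb\}$, and the three formulas follow from elementary sums, since $\Ic=\dgb^2/2$. For fixed $j$, the exponent $\alpha_1$ ranges over $0\le\alpha_1\le\dgb-\wgt j$, and $j$ itself ranges over $0\le j\le\dgb/\wgt$. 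Every sum therefore has the form $\sum_j\binom{j+\nS-2}{\nS-2}\,g(\dgb-\wgt j,j)$ for an explicit inner function $g$.

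\textbf{The three inner functions.} Put $L\defeq\dgb-\wgt j$.
\begin{itemize}
\item For $\rr$, $g=L+1$.
\item For $W$, $g=\sum_{\alpha_1\le L}(\alpha_1+j)=L^2/2+jL+O(L+j+1)$.
\item For $\Ew_\fp$, $g=\sum_{\alpha_1\le L}\lfloor\alpha_1/\Nm\fp\rfloor$. Writing $\lfloor x\rfloor=x-\{x\}$, this gives $g=L^2/(2\Nm\fp)+O(L+1)$, with implied constant absolute. This is what makes the estimate uniform in $\fp$: the error $O(L+1)$ does not depend on $\Nm\fp$, and for $\Nm\fp>\dgb$ both $\Ew_\fp$ and $\Ic/\Nm\fp$ are $O(\dgb^{\nS})$ anyway.
\end{itemize}

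\textbf{Replacing sums by integrals.} Next replace $j^{\nS-2}/(\nS-2)!$ weighted sums by integrals. The summand is a polynomial in $j$ of degree at most $\nS$ on a range of length $\dgb/\wgt$. Replacing the sum by the integral $\int_0^{\dgb/\wgt}$ costs one degree, i.e.\ $O(\dgb^{\nS})$ for $W$ and $\Ew_\fp$ and $O(\dgb^{\nS-1})$ for $\rr$. The lower order terms in the binomial coefficient and in $g$ cost the same. Substituting $j=(\dgb/\wgt)t$, the main integrals become Beta integrals:
\begin{itemize}
\item For $\rr$: $\dfrac{1}{(\nS-2)!}\displaystyle\int_0^{\dgb/\wgt}j^{\nS-2}(\dgb-\wgt j)\,dj=\dfrac{\dgb^{\nS}}{\wgt^{\nS-1}\nS!}=\dfrac{(\nS+1)\Ic}{\dgb}$.
\item For the $L^2/2$ part of $W$ and $\Ew_\fp$: $\dfrac{1}{2(\nS-2)!}\displaystyle\int j^{\nS-2}(\dgb-\wgt j)^2\,dj=\dfrac{\dgb^{\nS+1}}{\wgt^{\nS-1}(\nS+1)!}=\Ic$. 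This gives $\Ew_\fp=\Ic/\Nm\fp+O(\dgb^{\nS})$.
\item For the $jL$ part of $W$: $\dfrac{1}{(\nS-2)!}\displaystyle\int j^{\nS-1}(\dgb-\wgt j)\,dj=\dfrac{(\nS-1)\dgb^{\nS+1}}{\wgt^{\nS}(\nS+1)!}=\dfrac{\nS-1}{\wgt}\Ic$.
\end{itemize}
Adding the last two gives the stated constant $1+(\nS-1)/\wgt$ for $W$.

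\textbf{Main obstacle.} The only delicate point is checking the Beta-integral constants, specifically $B(\nS-1,3)=2(\nS-2)!/(\nS+1)!$ and $B(\nS,2)=(\nS-1)!/(\nS+1)!$. Beyond that, the work is bookkeeping of the error terms, so that they are $O_{\nS,\wgt}(\dgb^{\nS})$ (respectively $O(\dgb^{\nS-1})$ for $\rr$) with no dependence on $\fp$.
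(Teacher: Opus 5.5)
Your proposal is correct, and it reaches the estimates by a different decomposition from the paper's.

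The paper treats $\idx$ as the set of lattice points of the simplex $\triangle_\wgt=\{x_1+\wgt(x_2+\dots+x_\nS)\le\dgb\}$. It maps $\triangle_\wgt$ linearly onto the standard simplex, which gives the volume and the integrals $\int x_1=\Ic$, $\int x_j=\Ic/\wgt$ at once. All lattice-point errors are then controlled by a single covering argument: the unit cubes at the points of $\idx$ cover $\triangle_\wgt$ up to volume $O(\dgb^{\nS-1})$, and each coordinate varies by $O(1)$ on a cube. The passage from $\sum\alpha_1/\Nm\fp$ to $\Ew_\fp$ costs at most $\rr$, which is exactly your fractional-part step and gives the same uniformity in $\fp$.

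You instead slice by $j=|\va^*|$ and collapse the $\nS-1$ heavy variables with the stars-and-bars count $\binom{j+\nS-2}{\nS-2}$. You then sum over $\alpha_1$ exactly, leaving one-variable sums of polynomials in $(j,\dgb)$, which you compare with their integrals and evaluate as Beta integrals. Your three constants check out.

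What each route buys:
\begin{itemize}
\item The paper's route is shorter. After the substitution, the ratio $1:1/\wgt$ between the $x_1$- and $x_j$-integrals is evident by symmetry of the standard simplex, and it needs no special case. Its covering claim, however, is asserted as standard rather than proved.
\item Your route is more computational: it needs the Beta values and a separate, trivial case $\nS=1$. In exchange, every error term is an explicit one-dimensional sum-versus-integral comparison. That makes the dependence of the implied constants on $\nS$ and $\wgt$ alone fully transparent.
\end{itemize}
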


\begin{proof}
The set $\idx$ consists of the lattice points of the simplex
\[
\triangle_\wgt\defeq
\{x_1+\wgt(x_2+\dots+x_{\nS})\le\dgb\}\subset\R_{\ge 0}^{\nS},
\]
whose volume is $\dgb^{\nS}/(\nS!\,\wgt^{\nS-1})=(\nS+1)\Ic/\dgb$;
this gives the estimate for $\rr$. The substitution $y_1=x_1$,
$y_j=\wgt x_j$ for $j\ge 2$ maps $\triangle_\wgt$ onto the standard
simplex $\{y_1+\dots+y_{\nS}\le\dgb\}$ with Jacobian $\wgt^{\nS-1}$,
and $\int y_1=\dgb^{\nS+1}/(\nS+1)!$ over the standard simplex; hence
\[
\int_{\triangle_\wgt}x_1=\Ic\mand
\int_{\triangle_\wgt}x_j=\frac{\Ic}{\wgt}\qquad(2\le j\le\nS).
\]
A coordinate function is $O(\dgb)$ on $\triangle_\wgt$ and varies by
$O(1)$ on a unit cube, and the unit cubes at the points of $\idx$
cover $\triangle_\wgt$ up to a set of volume $O(\dgb^{\nS-1})$, so the
sum of a coordinate over $\idx$ differs from its integral over
$\triangle_\wgt$ by $O(\dgb^{\nS})$. This gives the estimate for
$W=\sum_{\va\in\idx}\alpha_1+\sum_{\va\in\idx}|\va^*|$. Finally,
$\Ew_\fp$ differs from $\sum_{\va\in\idx}\alpha_1/\Nm\fp$ by at most
$\rr=O(\dgb^{\nS})$.
\end{proof}

We also use the analogue of Chebyshev's bound,
\be\label{eq:cheb}
\sum_{\Nm\fp\le x}\log\Nm\fp\le 2\dk x\qquad(x\ge 1),
\ee
which holds because the primes of $k$ above a rational prime $p$
contribute at most $\dk\log p$.

\begin{proposition}\label{prop:main}
Let $\Kf/k$, $\galKk$, $\Sset$, $\dK$, $\dk$, $\nS$ be as in
\S\ref{sec:intro}, with $\nS<\dK$, and let $\wgt\ge 2$ be an integer.
Let $\bxs$ be large in terms of $\dK$ and $\wgt$, with
$\log\lamK\le\bxs$, and put
\[
\dgb\defeq\big\lceil((\bxs-1)/\nS)^{\dK/\nS}\big\rceil-1 .
\]
Then
\[
\ssum{\Nm\fp\le \dgb\\ \fp\ \text{admissible}}
\wt_\fp\,\frac{\log\Nm\fp}{\Nm\fp}
\le\Big(1+\frac{\nS-1}{\wgt}\Big)
\big(\dk\log \bxs+\dk\log\lamK\big)+O_{\dK,\wgt}(1).
\]
\end{proposition}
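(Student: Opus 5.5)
The plan is to argue by contradiction through Lemma~\ref{lem:pigeon} and Corollary~\ref{cor:zero}. With $\dgb$ chosen as in the statement, we have $\dgb<((\bxs-1)/\nS)^{\dK/\nS}$, so Corollary~\ref{cor:zero} forbids a nonzero $\poly\in\PolyD$ (every element of which has total degree at most $\dgb$) vanishing at all $\theta_\vn^{\Sset}$. Hence, for the set $\cS$ of admissible primes with $\Nm\fp\le\dgb$, the inequality \eqref{eq:pigeon} must fail:
\[
\frac1{\dk}\sum_{\fp\in\cS}\wt_\fp\,\Ew_\fp\log\Nm\fp\le W\log\hB+\rr\log(4\dK\rr\lamK).
\]
It then remains to insert the asymptotics of Lemma~\ref{lem:count} and divide by $\Ic$. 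The hypothesis $\nS<\dK$ guarantees $\dK/\nS>1$, so $\dgb\to\infty$ as $\bxs\to\infty$. This is what lets us use Lemma~\ref{lem:count} with errors negligible relative to $\Ic\asymp\dgb^{\nS+1}$.

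On the left, I would write $\Ew_\fp=\Ic/\Nm\fp+O(\dgb^{\nS})$, uniformly in $\fp$. The error contributes at most $O(\dgb^{\nS})\sum_{\Nm\fp\le\dgb}\log\Nm\fp\le O(\dk\dgb^{\nS+1})$ by \eqref{eq:cheb}, and after dividing by $\Ic$ this is $O_{\dK,\wgt}(1)$; I would use $\dk\le\dK$ and $\nS\le\dK$ throughout to make all constants depend only on $\dK,\wgt$. The left side is therefore $\Ic\cdot\frac{1}{\dk}\sum\wt_\fp\log\Nm\fp/\Nm\fp-O(\Ic)$.

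On the right, $W\log\hB=(1+\frac{\nS-1}{\wgt})\Ic\log\hB+O(\dgb^{\nS}\log\hB)$, and $\log\hB=\log\dK+\log\lamK+\log\bxs$. For the $\rr$-term, $\rr=O(\Ic/\dgb)$ and $\log\rr=O(\log\dgb)$, so $\rr\log(4\dK\rr\lamK)\ll\Ic(\log\dgb+\log\lamK)/\dgb$. The hardest point is controlling the error terms containing $\log\lamK$, since $\lamK$ is not bounded in terms of $\dK$. For this I would use the hypothesis $\log\lamK\le\bxs$, together with $\dgb\ge\bxs^{\dK/\nS}/C$ with $\dK/\nS>1$. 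These give $\log\lamK/\dgb\ll\bxs^{1-\dK/\nS}=O(1)$ and $\log\bxs/\dgb=O(1)$, so after division by $\Ic$ the error terms $O(\dgb^{\nS}\log\hB)/\Ic$ and $\rr\log(\cdots)/\Ic$ are $O_{\dK,\wgt}(1)$. I expect this step to be the main obstacle: one must check that every secondary term is $O(\Ic)$ rather than $O(\Ic\log\lamK)$, and that largeness of $\bxs$ in terms of $\dK,\wgt$ suffices for Lemma~\ref{lem:count} to apply.

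Finally, I would multiply through by $\dk/\Ic$. This gives the sum over admissible $\fp$ with $\Nm\fp\le\dgb$ of $\wt_\fp\log\Nm\fp/\Nm\fp$ bounded by $(1+\frac{\nS-1}{\wgt})\dk(\log\bxs+\log\lamK)+O_{\dK,\wgt}(1)$, the $\log\dK$ term being absorbed into the constant. That is the claim.
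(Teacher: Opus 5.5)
Your proposal is correct and follows the paper's proof essentially step for step. The steps are: Corollary~\ref{cor:zero} forces \eqref{eq:pigeon} to fail for the admissible primes with $\Nm\fp\le\dgb$; you then divide by $\Ic$ using Lemma~\ref{lem:count}, with \eqref{eq:cheb} and $0\le\wt_\fp\le 1$ controlling the left-hand error; finally, the hypothesis $\log\lamK\le\bxs$ together with $\dgb\gg\bxs^{\dK/\nS}$ and $\dK/\nS>1$ makes every secondary term $O(1)$ rather than $O(\log\lamK)$.
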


\begin{proof}
Every implied constant in this proof depends only on $\dK$ and
$\wgt$, as $\nS\le\dK$. Let $\cS$ be the set of admissible primes
with $\Nm\fp\le\dgb$. Every element of $\PolyD$ has degree at most
$\dgb$, so by Corollary~\ref{cor:zero} no nonzero element of $\PolyD$
vanishes at every $\theta_\vn^{\Sset}$, and by
Lemma~\ref{lem:pigeon} the inequality \eqref{eq:pigeon} fails for
$\cS$, that is,
\[
\frac1{\dk}\sum_{\fp\in\cS}\wt_\fp\,\Ew_\fp\log\Nm\fp
\le W\log\hB+\rr\log(4\dK\rr\lamK).
\]
We divide by $\Ic$ and estimate both sides by
Lemma~\ref{lem:count}. On the left, $\Ew_\fp\ge\Ic/\Nm\fp+O(\dgb^\nS)$
uniformly in $\fp$ and $0\le\wt_\fp\le 1$, so
\[
\sum_{\fp\in\cS}\wt_\fp\,\Ew_\fp\log\Nm\fp
\ge\Ic\sum_{\fp\in\cS}\wt_\fp\frac{\log\Nm\fp}{\Nm\fp}
+O\Big(\dgb^{\nS}\sum_{\Nm\fp\le\dgb}\log\Nm\fp\Big),
\]
and the error is $O(\dk\dgb^{\nS+1})=O(\dk\Ic)$ by \eqref{eq:cheb}
and $\dgb^{\nS+1}/\Ic=(\nS+1)!\,\wgt^{\nS-1}$. On the right,
\[
W/\Ic=1+(\nS-1)/\wgt+O(\dgb^{-1})
\mand
\rr\log(4\dK\rr\lamK)=O(\dgb^{\nS}\log(\dgb\lamK)),
\]
therefore
\[
\frac{W\log\hB+\rr\log(4\dK\rr\lamK)}{\Ic}
=\Big(1+\frac{\nS-1}{\wgt}\Big)\log\hB
+O\Big(\frac{\log\hB+\log(\dgb\lamK)}{\dgb}\Big),
\]
in which $\log\hB=\log\bxs+\log\lamK+\log\dK$, $\log\lamK\le\bxs$
by hypothesis, and $\dgb\gg\bxs^{\dK/\nS}$ with $\dK/\nS>1$; the
error is therefore $O(1)$. Altogether,
\[
\frac1{\dk}\sum_{\fp\in\cS}\wt_\fp\frac{\log\Nm\fp}{\Nm\fp}
\le\Big(1+\frac{\nS-1}{\wgt}\Big)
\big(\log\bxs+\log\lamK\big)+O(1),
\]
and multiplying by $\dk\le\dK$ finishes the proof.
\end{proof}

\begin{proof}[Proof of Theorem~\ref{thm:precise}]
Every implied constant in this proof depends only on $\dK$ and
$\wgt$. If $\nS=\dK$, then $k=\Q$ and $\Sset=\galKk$, so $\wt_p=1$ for
every $p$ unramified in $\Kf$, and the left side of \eqref{eq:precise}
is at most $\sum_{p\le\XX}\log p/p\le\log\XX+O(1)$ by Mertens'
theorem; this is at most the right side, since $\lamK\ge 1$. Now
assume $\nS<\dK$ and put
\[
\bxs\defeq 2+\lceil\nS(2\XX)^{\nS/\dK}\rceil .
\]
Then $((\bxs-1)/\nS)^{\dK/\nS}>2\XX$, so $\dgb\ge\XX$ in the notation
of Proposition~\ref{prop:main}, and
\[
\dk\log\bxs=\frac{\dk\nS}{\dK}\log\XX+O(1)
=\frac{\nS}{|\galKk|}\log\XX+O(1).
\]
We check the hypotheses of Proposition~\ref{prop:main}. Since
$\bxs\ge\XX^{1/\dK}$, the hypothesis on $\XX$ gives
\[
\bxs\ge\XX_0^{1/\dK}\mand\bxs\ge\log d_\Kf\ge\log\lamK,
\]
the last by \eqref{eq:minkowski}; the first bound makes $\bxs$ large
in terms of $\dK$ and $\wgt$ once $\XX_0$ is. The proposition
therefore bounds the sum over the admissible primes with
$\Nm\fp\le\XX$, as $\dgb\ge\XX$. The ramified primes contribute
nothing to the left side of \eqref{eq:precise}. The unramified primes
with $\Nm\fp<\wgt$ lie above the rational primes $p<\wgt$, at most
$\dk$ above each, and contribute at most $\dk\log p/p$ for each such
$p$, which is $O(1)$ in all. This proves \eqref{eq:precise}.
\end{proof}

\begin{proof}[Proof of Theorem~\ref{thm:main}]
Let $\eps\in(0,1]$, and define
\[
\wgt\defeq\Big\lceil\frac{4\dK}{\eps}\Big\rceil,\qquad
\expC_0\defeq\Big\lceil\frac{\dK}{\eps}\Big\rceil+1 .
\]
Let $\expC\ge\expC_0$ and $\XX\defeq d_\Kf^{\,\expC}$.
Theorem~\ref{thm:precise}
with this $\wgt$ applies as soon as
\[
\XX\ge\XX_0(\dK,\wgt)\mand\XX\ge(\log d_\Kf)^{\dK},
\]
and since $\XX=d_\Kf^{\,\expC}$, both hold once $d_\Kf$ is large in
terms of $\dK$, $\eps$, and $\expC$; assume that this is the case. By
\eqref{eq:minkowski}, we have
\[
\dk\log\lamK\le\frac{\dK}{2}\log d_\Kf=\frac{\dK}{2\expC}\log\XX,
\]
and $\nS-1\le\dK$, hence Theorem~\ref{thm:precise} gives
\[
\sum_{\Nm\fp\le\XX}\wt_\fp\,\frac{\log\Nm\fp}{\Nm\fp}
\le\Big(1+\frac{\dK}{\wgt}\Big)
\Big(\frac{\nS}{|\galKk|}+\frac{\dK}{2\expC}\Big)\log\XX
+O_{\dK,\eps,\expC}(1).
\]
It remains to bound the coefficient of $\log\XX$. We have
$\dK/\wgt\le\eps/4$ and $\dK/(2\expC)<\eps/2$ by the choice of
$\wgt$ and $\expC_0$, and $\nS\le|\galKk|$, so
\[
\Big(1+\frac{\dK}{\wgt}\Big)\Big(\frac{\nS}{|\galKk|}+\frac{\dK}{2\expC}\Big)
\le\Big(1+\frac{\eps}{4}\Big)\Big(\frac{\nS}{|\galKk|}+\frac{\eps}{2}\Big)
\le\frac{\nS}{|\galKk|}+\frac{\eps}{2}+\frac{\eps}{4}\cdot\frac32
<\frac{\nS}{|\galKk|}+\eps .
\]
The remaining fields $\Kf$ have bounded discriminant, so there are
finitely many of them by Hermite's theorem, and for each of them the
bound holds with the constant $\nS/|\galKk|$ and an $O_\Kf(1)$ by
\eqref{eq:chebotarev}. This completes the proof.
\end{proof}

\section{Remarks}\label{sec:remarks}

\begin{remark}\label{rem:compare}
The bound of Theorem~\ref{thm:main} is one-sided, and the identity
decides the side. The hypothesis $1\in\Sset$ is a normalization,
since the argument with $\emb_0(\theta)$ as the distinguished
variable, $\emb_0\in\Sset$, bounds the mass of $\Sset\emb_0^{-1}$,
which contains the identity. By Mertens' theorem for $k$,
Theorem~\ref{thm:main} gives the lower bound
$\big(|\Sset'|/|\galKk|-\eps\big)\log\XX$ for every
$\Sset'\subset\galKk$ with $1\notin\Sset'$, but no upper bound for
such an $\Sset'$. This is a limitation of the method and not of the
problem. The determinant method, as used for example in~\cite{Gr,Ko},
imposes vanishing to high order in one variable at the pairs
$(\alpha,\sigma\alpha)$, with $\alpha$ running over about $2\XX$ small
integers of $\Kf$; the derivatives supply the multiplicity that our
box of $\bxs^{\dK}$ points supplies, the second variable carries
degree only $\XX/\log\XX$, and the identity plays no part. The bound
of~\cite{Ko} therefore holds for every conjugacy class, with the exact
density, for every $\XX\ge 2$, and with absolute constants, where
Theorem~\ref{thm:main} needs $\XX\ge d_\Kf^{\,\expC_0}$ and gives the
density only up to $\eps$. The theorem of~\cite{Ko} is stated for
$\Kf/\Q$ Galois, but its proof uses only that $\sigma$ is an
automorphism of $\Kf$ and that the $\alpha$ are integers of $\Kf$. Run
over a base field $k$, with $\Nm\fp$ in place of $p$ in the congruence
and in the row operation, the norm of the determinant still taken to
$\Q$, and Chebyshev's bound for $k$ in place of
$\vartheta(\XX)\ll\XX$, it gives for $\Kf/k$ Galois and a class $\cls$
of size $c$ the bound $\frac{c}{|\galKk|}(\log\XX+\tfrac12\log d_\Kf)$
up to $O(c\,\dk+\dk\log\log\XX)$, which contains
Theorem~\ref{thm:quadratic} with $\Kf=F$ in place of $F(\rou)$. In our
method the identity is the cost. With $\Kf=F$ and $\Sset=\galKk$ the
constant is one, and the root of unity dilutes it to
$\tfrac12+1/[\Kf:k]$.
\end{remark}

\begin{remark}
Nothing is used about $\Q(\rou)$ beyond its being a Galois extension
of $\Q$ of degree $\auxp-1$ in which $\rou$ is an integer of house
one. For any field $E$ Galois over $k$ with $E\cap F=k$ and
$[E:k]=g$, and $\Kf=FE$ with $\Sset$ the identity together with the
coset of $\Gal(\Kf/F)$, the same proof gives the constant
$(g+1)/(2g)$; the case $g=2$ is the $\sqrt 2$ of~\cite{Ke} and gives
$\tfrac34$.
\end{remark}

\begin{remark}
The weights \eqref{eq:weight} are forced by the mechanism. The
congruence $\theta^{\Nm\fp}\equiv\sigma(\theta)$ holds modulo the
primes above $\fp$ whose Frobenius is $\sigma$ itself, of combined
norm $\Nm\fp^{\,|\galKk|/|\cls_\fp|}$, so a class of size $c$ meeting
$\Sset$ in one element contributes with weight $1/c$. For
$\Sset=\{1,\sigma\}$ with $\sigma$ in a class $\cls$ of size $c$,
Theorem~\ref{thm:main} reads
\[
\sum_{\cls_\fp=\{1\}}\frac{\log\Nm\fp}{\Nm\fp}
+\frac1c\sum_{\cls_\fp=\cls}\frac{\log\Nm\fp}{\Nm\fp}
\le\Big(\frac{2}{|\galKk|}+\eps\Big)\log\XX+O(1),
\]
sharp by \eqref{eq:chebotarev}, whereas the unweighted sum over
$\{1\}\cup\cls$ has density $(1+c)/|\galKk|$.
\end{remark}

\begin{remark}
Everything is effective. A large $|\Sset|$ costs a large $d_\Kf$,
since the error terms in Proposition~\ref{prop:main} carry constants
of size $\nS!\,\wgt^{\nS}$; an explicit $c(k)$ in
Corollary~\ref{cor:siegel} would fix a small~$\auxp$ and proceed as
in~\cite{Ba}. An explicit constant in Theorem~\ref{thm:bs} needs
in addition the constant of Lemma~\ref{lem:residue}, which Stark's
proof makes explicit, and numerical values of $\kappa_k$ and of the
real zeros of $\zeta_k(s)$ in $(0,1)$ for the given $k$.
\end{remark}

\section*{Acknowledgments}

The author thanks the Max Planck Institute for Mathematics in
Bonn for its support and excellent working conditions during the
preparation of this paper, Bryce Kerr for many helpful
discussions, Ben Green for comments on an earlier version, and
Christopher Frei, Wataru Kai, and Jesse Thorner for correspondence
about their work.

The large language model Claude (Anthropic) was used throughout the
preparation of this paper. The key ideas came from the author, who
prompted Claude with the question considered here and with the
papers~\cite{Ba} and~\cite{Ke}; from these inputs Claude produced an
initial argument. The author then worked through every line of the
mathematics by hand, verifying the arguments in their entirety, and
wrote the final version of the paper, supplying its notation,
elaborations of the underlying arguments, and the diagrams. Claude
was also used for polishing the wording, for literature searches, and
for proof checking. The author takes full responsibility for the
content.


\begin{thebibliography}{99}

\bibitem{Ba}
W.~D. Banks, On the OpenAI proof that Landau--Siegel zeros do not
exist. arXiv:2610.15483.

\bibitem{Ca}
J.~W.~S. Cassels, \emph{An introduction to the geometry of numbers}.
Grundlehren der mathematischen Wissenschaften, vol.~99, Springer,
Berlin, 1959.

\bibitem{CT}
D.~Cureton and J.~Thorner, A quasi-Riemann hypothesis for Dedekind
zeta functions. Hexagon preprint hexagon:2610.00200v2, 2026.

\bibitem{Da}
H.~Davenport, \emph{Multiplicative number theory}. 2nd ed., revised by
H.~L. Montgomery, Graduate Texts in Mathematics, vol.~74, Springer,
New York, 1980.

\bibitem{FK}
C.~Frei and W.~Kai, Exceptional zeros of Hecke $L$-functions. Hexagon
preprint hexagon:2610.00466v1, 2026.

\bibitem{GL}
S.~R. Garcia and E.~S. Lee, Unconditional explicit Mertens' theorems
for number fields and Dedekind zeta residue bounds.
\emph{Ramanujan J.} 57 (2022), 1169--1191.

\bibitem{Gr}
B.~Green, On Landau--Siegel zeros. Hexagon preprint
hexagon:2610.00311v2, 2026.

\bibitem{Ke}
B.~Kerr, A simpler proof that there are no Landau--Siegel zeros.
arXiv:2610.21099.

\bibitem{Ko}
S.~D. Kominers, Small algebraic integers and Frobenius prime sums.
Preprint, October 2026.

\bibitem{LO}
J.~C. Lagarias and A.~M. Odlyzko, Effective versions of the Chebotarev
density theorem. In \emph{Algebraic number fields} (Durham, 1975),
A.~Fr\"ohlich, ed., Academic Press, 1977, 409--464.

\bibitem{Li}
B.~Liu, Zero-free half-planes for Hecke $L$-functions over number
fields. arXiv:2610.26835.

\bibitem{Lo}
S.~Louboutin, Explicit bounds for residues of Dedekind zeta
functions, values of $L$-functions at $s=1$, and relative class
numbers. \emph{J. Number Theory} 85 (2000), 263--282.

\bibitem{Ma}
D.~W. Masser, On polynomials and exponential polynomials in several
complex variables. \emph{Invent. Math.} 63 (1981), 81--95.

\bibitem{Ne}
J.~Neukirch, \emph{Algebraic number theory}. Grundlehren der
mathematischen Wissenschaften, vol.~322, Springer, Berlin, 1999.

\bibitem{OAI}
OpenAI, Uniform exclusion of Landau--Siegel zeros. Preprint, 1 October
2026.

\bibitem{Ra}
H.~Rademacher, On the Phragm\'en--Lindel\"of theorem and some
applications. \emph{Math. Z.} 72 (1959), 192--204.

\bibitem{Ro}
M.~Rosen, A generalization of Mertens' theorem. \emph{J. Ramanujan
Math. Soc.} 14 (1999), 1--19.

\bibitem{St}
H.~M. Stark, Some effective cases of the Brauer--Siegel theorem.
\emph{Invent. Math.} 23 (1974), 135--152.

\end{thebibliography}
\end{document}